\newcommand{\R}{{\mathbb R}^n}
\newcommand{\Om}{\Omega}
\newtheorem{thm}{Theorem}
\newtheorem{lem}[thm]{Lemma}
\newtheorem{prop}[thm]{Proposition}
\theoremstyle{definition}
\newtheorem*{defn}{Definition}
\newtheorem*{rem}{Remark}
\newtheorem*{ack}{Acknowledgements}
\begin{document}

\title[Bubbles-2013]{Soap bubbles and isoperimetric regions in the product of a closed manifold with Euclidean space}

\author[Jes\'us Gonzalo]{Jes\'us Gonzalo P\'erez}
\address{Departamento de Matem\'aticas,
Universidad Aut\'onoma de Ma\-drid, 28049 Madrid, Spain.}
\email{jesus.gonzalo@uam.es}
\thanks{Partially supported by
         grant MTM2004-04794 and MTM2011-22612 from MICINN, Spain.}

\date{}

\begin{abstract}
For any closed Riemannian manifold $X$ we prove that large isoperimetric regions in $X\times\R$ are of the form $X\times (\mbox{\rm Euclidean ball})$. We first show that isoperimetric boundaries in such ambient manifold are very regular, and then
obtain apriori estimates for CMC hypersurfaces leading to the result. We prove that if $X$ has non-negative Ricci curvature then the only soap bubbles enclosing a large volume are the products $X\times (\mbox{\rm Euclidean sphere})$. We give an example of a surface $X$, with Gaussian curvature negative somewhere, such that the product $X\times {\mathbb R}$ contains \emph{stable} soap bubbles of arbitrarily large enclosed volume which do not even project surjectively onto the $X$ factor.
\end{abstract}

\maketitle


\section{Introduction}\label{introduction}

\begin{defn}
Given a Riemannian manifold $M$, possibly with boundary, and a positive number $v$, the \textbf{isoperimetric problem} asks for a region $\Om\subset M$ whose volume is $v$ and whose perimeter is minimal among all regions of volume $v$ in~$M$.
\end{defn}

Solutions to the isoperimetric problem, if they exist, are called \textbf{isoperimetric regions}. Their boundaries are called \textbf{isoperimetric boundaries}; they are differentiable hypersurfaces except perhaps at some singular points that together make a closed subset which is either empty or of codimension at least 8 in the ambient manifold \cite{Morgan03}. More precisely, these boundaries are smooth away from the singular points and from $\partial M$, and they are of class at least ${\mathcal C}^1$ at the points where they touch $\partial M$, see e.g.~\cite{Gonzalez}.
Moreover, they have constant mean curvature in the smooth part away from~$\partial M$.

\begin{defn}
A \textbf{soap bubble} in $M$ is any smooth embedded hypersurface $S\subset M$ which has constant mean curvature and is the boundary of some smooth domain.
\end{defn}

While closely related, the two concepts are not equivalent. An isoperimetric boundary will not be a soap bubble if it has singular points or if it touches the boundary of the ambient manifold. A soap bubble may not minimize area among hypersurfaces enclosing the same volume.

The purpose of the present paper is to study the shape of soap bubbles and isoperimetric regions in Riemannian products $X\times\R$, where $X$ is a closed, connected Riemannian manifold of any dimension. Our first result gives symmetry and regularity for these objects, large or small. To make a precise statement we fix some conventions. We write $B(y,r)$ for the Euclidean ball with center $y$ and radius $r$ in ${\mathbb R}^n$, write $\overline{B}(y,r)$ for the closed ball, and $S(y,r)$ for the Euclidean sphere.

\begin{defn}
A subset $E\subset X\times {\mathbb R}^n$ is \textbf{normalized} if there is an open set $A\subseteq X$, a function $u:A\to {\mathbb R}^+$, and a point $y\in{\mathbb R}^n$, such that $E$ is the union of the coaxial balls $\{ x\}\times B\big( y,u(x)\big)$, as $x$ ranges over~$A$, or the closure of such union. The \textbf{rotated graph} of $u$ is the union of the coaxial spheres $\{ x\}\times S\big( y,u(x)\big)$ as  $x$ ranges over~$A$. The \textbf{symmetry axis} of these sets is $X\times \{y\}$.
\end{defn}

We now state the symmetry and regularity result.

\begin{thm}\label{sym-smooth}
Every isoperimetric region in $X\times\R$ is bounded and normalized. Every isoperimetric region in the compact manifold $X\times
\overline{B}(y, r)$  is, up to a translation parallel to the $\R$ factor, normalized with symmetry axis $X\times \{ y\}$. In
both cases its boundary is, after deleting the symmetry axis, the rotated graph of a ${\mathcal C}^1$
function $u:A\to{\mathbb R}^+$ which is smooth in all of $A$ for isoperimetric boundaries in $X\times\R$, and smooth
in $\{ u<r\}\subset A$ for isoperimetric boundaries in $X\times \overline{B}(y, r)$.

Any soap bubble in $X\times\R$  is, after deleting the symetry axis, the rotated graph of a smooth function.
\end{thm}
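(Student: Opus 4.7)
The plan is to adapt Alexandrov's method of moving planes to the product $X\times\R$. The key geometric observation is that for every affine hyperplane $H\subset{\mathbb R}^n$, the map that reflects ${\mathbb R}^n$ across $H$ and acts trivially on $X$ is an isometry of $X\times\R$, and therefore sends constant mean curvature (CMC) hypersurfaces to CMC hypersurfaces with the same mean curvature.

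The first step is to show that any soap bubble $S\subset X\times\R$ is compact. Compactness of $X$ reduces this to a height bound on $S$ in the $\R$ factor, which should follow from the a priori CMC estimates alluded to in the abstract: a sphere of radius exceeding $1/|H|$ slid along $\R$ into an unbounded end of $S$ would violate the maximum principle.

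For each unit vector $v\in{\mathbb R}^n$, I would then run moving planes with the totally geodesic hyperplanes $\Pi_{v,t}:=X\times\{y\in{\mathbb R}^n:\langle v,y\rangle=t\}$. Start with $t$ so large that the half-space $\{\langle v,\cdot\rangle>t\}$ misses $S$, decrease $t$, and reflect the cap $S_t^+:=S\cap\{\langle v,\cdot\rangle>t\}$ through $\Pi_{v,t}$. Let $t_v$ be the first value of $t$ at which either the reflected cap meets $S\setminus S_t^+$ at an interior point with a common tangent plane, or the reflected and unreflected parts become tangent along a point of $\Pi_{v,t_v}$ itself. At such a contact point, $S$ and its reflection are CMC with the same mean curvature and the same oriented unit normal; the strong maximum principle (interior case) or the Hopf boundary-point lemma (boundary case) applied to their difference forces local coincidence, and unique continuation for the quasilinear CMC equation propagates this to all of $S$. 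Hence $S$ is symmetric under reflection through $\Pi_{v,t_v}$.

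Since this symmetry holds in every direction $v$, a classical argument produces a common point $y\in{\mathbb R}^n$ through which every hyperplane of symmetry passes, so $S$ is invariant under the group of rotations of ${\mathbb R}^n$ fixing $y$. The orbits of this group are the coaxial spheres $\{x\}\times S(y,r)$, so away from the axis $X\times\{y\}$ the surface $S$ is a union of such orbits. Smoothness of $S$ and the fact that it bounds a smooth domain then single out one such radius over each $x$ in an open set $A\subseteq X$, yielding the required smooth function $u\colon A\to{\mathbb R}^+$. The main obstacle is the boundary-contact step: one must express both the reflected cap and the opposite side of $S$ as graphs over $\Pi_{v,t_v}$ near the touching point, write down the CMC equation in these coordinates, and verify that the $X$-factor metric enters only through lower-order terms, so that uniform ellipticity and Hopf's lemma carry over from the Euclidean case. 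The compactness step may also require nontrivial input from the paper's a priori estimates.
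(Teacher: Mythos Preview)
Your moving-plane argument for soap bubbles is essentially the paper's own approach (Section~3 cites Alexandrov via Hopf), and the compactness worry is misplaced: a soap bubble is by definition the boundary of a smooth (bounded) domain, hence already compact. One small point deserves a word: from rotational symmetry of a smooth hypersurface $S$ it does \emph{not} follow automatically that the radius function $u$ is smooth; one needs $\rho_\nu>0$ away from the axis (otherwise the projection $S\to X$ fails to be a submersion and $u$ can have infinite gradient). The paper proves $\rho_\nu>0$ via a Schr\"odinger-type identity for $\rho_\nu$ (Section~4), but you can also extract it directly from your moving-plane argument: for each $v$ the cap above the critical hyperplane is a graph in direction $v$, so at $p=(x,y)$ with $y\neq 0$ the choice $v=y/|y|$ gives $\nu\cdot v>0$, i.e.\ $\rho_\nu>0$. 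You should say this.

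The genuine gap is that you have only treated the last sentence of the theorem. The bulk of the statement concerns \emph{isoperimetric} regions in $X\times\R$ and in $X\times\overline B(y,r)$, and for these your method does not apply as written. Isoperimetric boundaries may have a singular set of codimension $\ge 8$, so the interior/boundary maximum-principle step of Alexandrov reflection is not available; the paper instead uses Steiner--Schwarz symmetrization along each Euclidean direction (which works at the level of sets and needs no smoothness) to obtain normalization. Boundedness in $X\times\R$ is a separate issue and is proved via the monotonicity formula, which gives a uniform lower area bound on balls centered at regular boundary points and rules out unbounded minimizers. Finally, regularity of $u$ in the isoperimetric case requires more than $\rho_\nu>0$: one must remove the projected singular set from the domain of $u$, and the paper does this by passing to the Cartesian graph description and invoking the De Giorgi--Stampacchia / L.~Simon removable-singularity theorem for the CMC equation. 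None of these three ingredients appears in your proposal.
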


A consequence is that singular points on an isoperimetric boundariy, if any,  can only exist where said boundary
meets its symmetry axis.

Isoperimetric regions always exist if the ambient manifold is compact. For the non-compact manifold $X\times\R$ one can use the argument in \cite[page 129]{Morgan}, which provides existence on ambient spaces that have an isometry action with compact quotient. In the present paper we give a direct existence proof for $X\times\R$ by showing that, for fixed volume $v$ and large radius $r$, there are isoperimetric regions of volume $v$ in $X\times\overline{B}(y,r)$ that do not reach $X\times S(y,r)$ at all. Details are given in Section~\ref{sec-existence}.

Our two main results are the following.

\begin{thm}\label{large-isop}
Large isoperimetric regions in $X\times{\mathbb R}^n$ are of the form $X\times (\mbox{\rm ball})$.
\end{thm}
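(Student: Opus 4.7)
The plan is to combine Theorem~\ref{sym-smooth} with a priori estimates for the CMC graph equation on $X$ in order to force, for large volumes, the normalizing function $u_v$ to be defined on all of $X$ and equal to a constant. By Theorem~\ref{sym-smooth}, the isoperimetric region $\Omega_v$ is the rotated graph of some $u_v\colon A_v\to{\mathbb R}^+$ about an axis $X\times\{y\}$, and a direct computation using rotational invariance yields
\begin{align*}
\mathrm{vol}(\Omega_u)&=\omega_n\int_{A}u^n\,dV_X,\\
\mathrm{area}(\partial\Omega_u)&=n\omega_n\int_{A}u^{n-1}\sqrt{1+|\nabla u|^2}\,dV_X.
\end{align*}
The target is to show $A_v=X$ and $u_v\equiv c_v:=(v/V_X\omega_n)^{1/n}$ for $v$ large.

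I argue by contradiction, taking a sequence $v_j\to\infty$ of volumes whose isoperimetric regions $\Omega_{v_j}$ fail to be cylinders. The cylinder $X\times B(0,r_j)$ with $r_j=c_{v_j}$ is a competitor, so $\mathrm{area}(\partial\Omega_{v_j})\le V_Xn\omega_nr_j^{n-1}$; a tangent-line comparison for the isoperimetric profile then yields that the CMC constant $H_j$ of $\partial\Omega_{v_j}$ is $O(1/r_j)$ and in particular $H_j\to 0$. Evaluating the graph CMC equation at an interior extremum of $u_j$ gives the pointwise sandwich $\inf u_j\le(n-1)/H_j\le\sup u_j$, and in combination with the volume identity this pins the correct scale of $u_j$ to be $r_j$.

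Rescaling the Euclidean factor by $r_j^{-1}$ and setting $\tilde u_j=u_j/r_j$, one obtains a quasilinear CMC equation with bounded rescaled mean curvature $\tilde H_j=r_jH_j$ on $A_j\subseteq X$, with the integral normalization $\int_{A_j}\tilde u_j^n\,dV_X=V_X$. The heart of the argument is an a priori $C^{2,\alpha}$ estimate for $\tilde u_j$, including a uniform lower bound $\tilde u_j\ge\delta>0$ forcing $A_j=X$ for $j$ large and a uniform gradient bound $|\nabla\tilde u_j|\to 0$. A subsequential limit $\tilde u_\infty$ then satisfies a rigid limit equation whose gradient term is annihilated and which reduces to $(n-1)\tilde u_\infty^{n-2}=\tilde H_\infty\tilde u_\infty^{n-1}$ pointwise. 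The only positive solution is a constant, contradicting the failure of cylindricity.

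The main obstacle is precisely the a priori $C^{2,\alpha}$ estimate. Two bad scenarios must be ruled out: (i) $A_j$ being a proper subdomain of $X$ with $u_j\to 0$ along $\partial A_j$, producing an almost-degenerate graph meeting the symmetry axis; and (ii) a blow-up of $|\nabla u_j|$ somewhere in $A_j$ even when $A_j=X$. Both are excluded by the CMC a priori estimates alluded to in the abstract, which exploit the flat Euclidean factor (via moving planes and barrier constructions adapted to the rotational symmetry) together with the compactness of $X$ to upgrade local bounds into global uniform ones.
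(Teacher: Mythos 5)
Your overall architecture (symmetrize via Theorem~\ref{sym-smooth}, bound $H$ by comparison with the cylinder, then force $u$ to be constant) matches the paper's, but the two steps that carry all the weight are not actually supplied. First, your pointwise sandwich $\min u_j\le (n-1)/H_j\le\max u_j$ only pins the \emph{scale} of $u_j$; it does not prevent $\min u_j$ from being tiny (equivalently $A_j\subsetneq X$ with $u_j\to 0$ on $\partial A_j$), nor does it bound the gradient. These are exactly the contents of estimate (\ref{H-osc-bound}) of Theorem~\ref{teo-estimates} (oscillation bound $\max u-\min u\le\mathrm{const}$, proved via the monotonicity formula, a sphere-packing lower area bound, and the cylinder comparison) and of the Appendix (Korevaar-type gradient bound, which itself takes the oscillation bound as input). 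Deferring both to ``the CMC a priori estimates alluded to in the abstract'' is circular: those estimates are the theorem's main technical content, and moving-planes has already been spent on the rotational symmetry — it does not give you the oscillation bound.

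Second, your rigidity endgame has a scaling defect. Rescaling only the Euclidean factor by $r_j^{-1}$ is not a homothety of $X\times{\mathbb R}^n$; substituting $u_j=r_j\tilde u_j$ into
\begin{equation*}
H=\frac{n-1}{u\sqrt{1+|\nabla u|^2}}-\mathrm{div}\,\frac{\nabla u}{\sqrt{1+|\nabla u|^2}}
\end{equation*}
multiplies the divergence term by $r_j$ relative to the curvature term, so the ``rescaled CMC equation'' is not uniformly elliptic with bounded data, and passing to a subsequential limit does not obviously kill the divergence term without second-derivative control at the correct (anisotropic) scale. The paper avoids compactness altogether: it sets $\sigma=(u/n)^n=a+\tau$ with $\int_X\tau=0$, computes the first variation of area under the explicit volume-preserving deformation $\sigma_t=a+e^t\tau$, and uses the gradient bound plus the spectral gap $\lambda_1(X)$ to show the resulting integral is strictly positive unless $\tau\equiv 0$ once $a$ is large (Proposition~\ref{aux} and Lemma~\ref{alpha-ineq}). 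That quantitative variational argument is the piece your proposal would need to replace, and as written the replacement does not go through.
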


\begin{thm}\label{large-bubble}
If  $X$ has $\,\mbox{\rm Ric}\geq 0$, then soap bubbles in $X\times\R$ with large enclosed volume are of the form $X\times (\mbox{\rm sphere})$.
\end{thm}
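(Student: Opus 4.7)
By Theorem~\ref{sym-smooth}, a soap bubble $S\subset X\times\R$ is, away from its symmetry axis $X\times\{y\}$, the rotated graph of a smooth function $u\co A\to\mathbb{R}^+$ on an open $A\subseteq X$. Writing $v=\sqrt{1+|\nabla u|^2}$, a direct computation shows that the CMC condition is equivalent to the quasilinear equation
\begin{equation*}
\mathrm{div}_X\!\left(\tfrac{\nabla u}{v}\right)=\tfrac{n-1}{uv}-nH,
\end{equation*}
with $H$ proportional to the mean curvature of $S$, and the enclosed volume is $\beta_n\int_A u^n\,dx_X$. Evaluating at an interior maximum, respectively minimum, of $u$ (where $\nabla u=0$, $v=1$) the maximum principle yields $u_{\max}\geq(n-1)/(nH)\geq u_{\min}$. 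My plan is to treat separately the two possible failures of the desired conclusion: $A\subsetneq X$, and $A=X$ with $u$ non-constant.

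For the case $A=X$ with $u$ non-constant, I would apply Bochner's identity to $u$ on the closed manifold $X$:
\begin{equation*}
\tfrac{1}{2}\Delta|\nabla u|^2=|\mathrm{Hess}\,u|^2+\langle\nabla u,\nabla\Delta u\rangle+\mathrm{Ric}_X(\nabla u,\nabla u).
\end{equation*}
Integrating, using $\mathrm{Ric}_X\geq 0$, the refined bound $|\mathrm{Hess}\,u|^2\geq(\Delta u)^2/\dim X$, and the explicit formula $\Delta u=(n-1)/u-nHv+\mathrm{Hess}\,u(\nabla u,\nabla u)/v^2$ derived from the CMC equation, I would obtain an integral inequality relating $\int_X\mathrm{Ric}_X(\nabla u,\nabla u)$ and the traceless part of $\mathrm{Hess}\,u$ to quantities that, thanks to the a priori CMC estimates promised in the abstract (controlling $u$, $u^{-1}$, and $|\nabla u|$), become vanishingly small as $H\to 0$. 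The equality cases of Bochner and Cauchy--Schwarz then force $\nabla u\equiv 0$.

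For the case $A\subsetneq X$, the bubble caps off on its axis, with $u\to 0$ and $|\nabla u|\to\infty$ on $\partial A$. Along an integral curve of $\nabla u/|\nabla u|$ in $X$ joining an interior maximum of $u$ to $\partial A$, the CMC equation reduces to a second-order ODE in the arclength parameter whose solutions vanishing at one endpoint with infinite slope exist only on intervals of length bounded in terms of $H$. Combined with Bishop--Gromov applied to $X$ under $\mathrm{Ric}\geq 0$, this bounds $\mathrm{vol}(\overline{A})$ and hence the enclosed volume in terms of $H$ alone, contradicting $V\to\infty$ in the capping case.

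The main obstacle is the rigidity step in the case $A=X$: the Bochner/$\mathrm{Ric}\geq 0$ estimate yields only quantitative $L^2$ smallness of $\nabla u$, and upgrading this to \emph{exact} constancy of $u$ when $H$ is small but nonzero requires the structural a priori estimates for CMC hypersurfaces that the paper develops. That $\mathrm{Ric}\geq 0$ is essential here is confirmed by the counterexample with negative Gaussian curvature described in the abstract.
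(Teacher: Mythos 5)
Your outline does not close, and the decisive gap is the one you yourself flag: the rigidity step when $A=X$. An integrated Bochner identity on $X$ with $\mathrm{Ric}\geq 0$ gives $\int_X(\Delta u)^2-\int_X|\mathrm{Hess}\,u|^2=\int_X\mathrm{Ric}(\nabla u,\nabla u)\geq 0$, which is consistent with any $u$ and, even combined with the CMC equation and the a priori bounds, can only yield quantitative smallness of $\nabla u$ --- never exact constancy for a fixed large volume. The paper's mechanism is different and is genuinely exact: since $S$ has constant mean curvature, the first variation of area vanishes for \emph{every} volume-preserving deformation. Working with the slice-volume function $\sigma=(u/n)^n=a+\tau$ (with $\int_X\tau=0$) and the specific volume-preserving deformation $\sigma_t=a+e^t\tau$, one gets an identity $0=\int_X(\alpha\sigma^{2\alpha-1}\tau+|\nabla\tau|^2)/\sqrt{\sigma^{2\alpha}+|\nabla\tau|^2}$, which, after using the oscillation bound of Theorem~\ref{teo-estimates} and the gradient bound of Theorem~\ref{grad-estimate}, is bounded below by $a^{-\alpha}\int_X\bigl(C''|\nabla\tau|^2-\tfrac{4}{n}a^{-2/n}\tau^2\bigr)$. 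The Poincar\'e inequality $\int_X|\nabla\tau|^2\geq\lambda_1(X)\int_X\tau^2$ then makes the right side strictly positive for large $a$ unless $\tau\equiv 0$. This spectral-gap argument is the missing idea; no ``upgrade from approximate to exact'' is required.

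Two further points. First, your treatment of the capping case $A\subsetneq X$ is flawed: the CMC equation does not reduce to an ODE along integral curves of $\nabla u/|\nabla u|$, because $\mathrm{div}(\nabla u/v)$ involves the full Laplacian of $u$, not only the second derivative in the gradient direction, so the claimed length bound on such curves is unsupported; moreover, bounding $\mathrm{vol}(\overline{A})$ in terms of $H$ alone does not contradict $V\to\infty$ unless you separately bound $H$ from below. The paper disposes of this case in one line: under $\mathrm{Ric}\geq 0$ the soap-bubble part of Theorem~\ref{teo-estimates} gives $\max u-\min u\leq\mathrm{const}$, hence $\min u>0$ for large volume, while $u$ must vanish on the frontier $\overline{A}\setminus A$; so $A=X$. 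Second, the ``a priori CMC estimates promised in the abstract'' are not a black box you may assume --- the oscillation and curvature bounds for non-isoperimetric bubbles under $\mathrm{Ric}\geq 0$ are themselves proved by a Serrin--Meeks maximum-principle argument applied to $\rho-(\rho_1/4)\rho_\nu$, and they, together with the Korevaar gradient estimate of the Appendix, are exactly the inputs that make the spectral-gap step work.
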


Several authors have studied isoperimetric regions in product spaces. Wu-Yi Hsiang and Wu-Teh Hsiang \cite{HsHs} determined them in the product of two hyperbolic spaces. Duzaar and Steffen \cite{Duzaar} proved Theorem~\ref{large-isop} for the cylinder spaces $X\times{\mathbb R}$. Pedrosa and Ritor\'e \cite{PedroRit} proved Theorem~\ref{large-isop} for $S^1\times\R$ as well as the analogous result for the product of $S^1$ with a hyperbolic space.  Ritor\'e and Vernadakis \cite{RitVernad} have obtained a proof of Theorem~\ref{large-isop} in the spirit of Geometric Measure Theory. Theorem~\ref{large-isop} is also true \cite{Gonzalo}  when the ambient
anifold is the product $X\times{\mathbb H}^n$.

\begin{defn}
Let $M$ be a Riemannian manifold, and let $S\subset M$ be a soap bubble with unit normal $\nu$ and
second fundamental form $II$. Let $\,\mbox{\rm Ric}\,$ be the Ricci tensor of $M$. The
\textbf{index form} of $S$ is the following
quadratic form acting on functions $g:S\to{\mathbb R}$:
\[
Q(g)\; =\; \int_S \big(\; |\nabla^S g|^2+P\, g^2\;\big)\, d\,\mbox{\rm area}
\quad ,\quad P\; :=\; -\mbox{\rm Ric}(\nu ,\nu )-|II|^2
\; . \]
The \textbf{Jacobi equation} is $\Delta^Sg-P\, g=0$. We say that $S$ is \textbf{stable} if
its index form is positive definite on the functions with zero average over~$S$, except those
of the form $\langle\nu ,\xi\rangle$ where $\xi$ is a Killing vector field on~$M$.
\end{defn}

While isoperimetric boundaries are \textit{global} minima of area for fixed enclosed volume, stability is the
necessary condition for a \textit{local} minimum. It is natural to ask whether the condition $\,\mbox{\rm Ric}\geq 0$
in Theorem~\ref{large-bubble} can be replaced by the hypothesis of large stable soap bubble. The answer is negative.

\begin{thm}\label{teo-estables}
There exist surfaces $X^2$, with Gauss curvature negative somewhere, such that in the manifold $X\times{\mathbb R}$ we find a one-parameter family $\{ S_v\}_{0< v<\infty}$ of soap bubbles with the following properties:
\begin{enumerate}
\item The enclosed volume of $S_v$ is $v$.
\item There is a positive lower bound for the mean curvatures of the $S_v$.
\item No $S_v$ projects surjectively onto~$X$.
\item There is a $v_0>0$ such that all soap bubbles $S_v$ with $v\geq v_0$ are stable.
\end{enumerate}
\end{thm}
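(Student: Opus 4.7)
My approach is to build $X$ as a warped-product surface and to look for the bubbles $S_v$ among rotationally invariant closed CMC surfaces in $X\times{\mathbb R}$. I take $X=S^1_\phi\times_f S^1_\theta$ with metric $d\phi^2+f(\phi)^2\,d\theta^2$, where $f\co S^1\to{\mathbb R}^+$ is smooth and chosen so that $f$ has a strict minimum at $\phi=0$ with $f''(0)>0$; then $\gamma=\{\phi=0\}\times S^1_\theta$ is a closed geodesic of $X$ and the Gauss curvature $K(\phi)=-f''(\phi)/f(\phi)$ is negative in a neighborhood of $\gamma$, so $X$ has $K<0$ somewhere as required by the statement.

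By Theorem~\ref{sym-smooth}, any soap bubble is, after removing the symmetry axis, the rotated graph of a smooth $u\co A\to{\mathbb R}^+$; for $\theta$-invariant bubbles, $u=u(\phi)$ on a sub-interval $(\phi_1,\phi_2)\subset S^1_\phi$ with $u$ vanishing and vertical tangent at the endpoints. The CMC equation reduces to the ODE
\[
\Bigl(\frac{f\,u'}{\sqrt{1+u'^2}}\Bigr)'\;=\;2Hf,
\]
whose first integral yields $H=-(f(\phi_1)+f(\phi_2))/\bigl(2\int_{\phi_1}^{\phi_2}f\bigr)$, giving a unique $\theta$-invariant bubble with projection $(\phi_1,\phi_2)\times S^1_\theta\subsetneq X$. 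Because these $\theta$-invariant tubes have bounded enclosed volume, the family $\{S_v\}_{0<v<\infty}$ must be completed by bifurcating from a symmetric bubble whose linearized CMC (Jacobi) operator has a non-$\theta$-invariant zero mode, producing a branch of non-symmetric bubbles along which $v\to\infty$ while the projection stays inside a fixed proper open subset of $X$. Properties (1)--(3) then follow: volumes along the combined branch can be made to exhaust $(0,\infty)$, $|H|$ remains bounded below by a positive constant depending only on $f_{\min}$ and $\int_{S^1}f$, and the projection avoids a neighborhood of $\phi=\pi$ by construction.

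For stability (property 4) I would compute the index form
\[
Q(g)\;=\;\int_S\bigl(|\nabla^S g|^2+Pg^2\bigr)\,d\mbox{\rm area}, \qquad P=-\mbox{\rm Ric}(\nu,\nu)-|II|^2,
\]
and exploit the fact that $-K>0$ on the portion of $S_v$ lying near $\gamma$ so as to dominate the destabilizing $-|II|^2$ contribution for $v$ large, forcing $Q$ to be positive on mean-zero variations modulo the Killing generators $\partial_t$ and $\partial_\theta$. The central difficulty is combining (1) and (2) within a single family: the $\theta$-invariant ODE alone yields only bounded volume, so the proposed bifurcation is essential and hinges on arranging that the Jacobi operator of some symmetric bubble has precisely the right non-symmetric kernel, which constrains the design of $f$. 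The spectral estimate underlying the stability claim is a close second, requiring a quantitative comparison of the $-K$ and $|II|^2$ contributions along the whole of $S_v$ together with a careful handling of the Killing-generated kernel.
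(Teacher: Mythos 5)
There is a genuine gap at the heart of your construction: you assert that the $\theta$-invariant tubes ``have bounded enclosed volume'' and therefore propose to reach large volumes by bifurcating to non-symmetric bubbles. The paper's construction shows that this premise is exactly what a careful choice of the warping function avoids. Writing $F(s)=\int_0^s f$ and $\varphi=F/f$, the rotationally invariant CMC profiles are given explicitly by $u_{s_1}(s)=-\int_0^s \varphi\,\big(\varphi(s_1)^2-\varphi(s)^2\big)^{-1/2}ds$ with $H=1/\varphi(s_1)$. If $f$ is chosen so that $\log F$ has an inflection at some $s_0>0$ (equivalently, $\varphi$ has an interior local maximum at $s_0$), then as $s_1\nearrow s_0$ the integrand develops a non-integrable $(s_0-s)^{-1}$ singularity, the profile becomes unbounded in the ${\mathbb R}$-direction, and the enclosed volume of the \emph{symmetric} tube $S_{s_1}$ tends to infinity while $H\to 1/\varphi(s_0)>0$ and the projection stays inside the fixed annulus $\{|s|<s_0\}$. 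So properties (1)--(3) are achieved entirely within the symmetric family; no bifurcation is needed, and the design condition you are missing is on $\varphi=F/f$, not merely on $f$ having a convex minimum. Your proposed alternative --- a branch of non-symmetric bubbles born at a Jacobi degeneracy --- is not carried out: you give no argument that such a branch exists for your $f$, that volume diverges along it, that the bubbles remain embedded and confined to a proper subset of $X$, or that $H$ stays bounded below; and bifurcation points are precisely where stability is typically lost, which works against property (4).

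The stability sketch is also too thin to count as a proof. The potential $P=-\mbox{\rm Ric}(\nu,\nu)-|II|^2$ equals $H^2(-\varphi\,\varphi_{ss}-\varphi_s)$ on these tubes and is genuinely negative near $s=0$, so one cannot make $Q$ positive by pointwise domination of $-|II|^2$ by $-K$; the paper instead imposes the extra hypothesis $(\log f)_{ss}>0$, takes the warping constant small, decomposes test functions by parity in $s$ and $y$ and by Fourier modes in $\theta$, and uses the explicit Jacobi fields $\langle\partial_y,\nu\rangle$ and $\varphi$ as comparison functions. Some such quantitative mechanism must replace your heuristic before property (4) is established.
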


 The fact that the surfaces $S_v$ with $v\geq v_0$ are stable implies that it is possible to get them by ``inflating $S_{v_0}$'', so that they will never ``burst'' during the process. There are examples of this phenomenon where $X$ is a surface in ${\mathbb R}^3$ with the induced metric; then the product $X\times{\mathbb R}$ is a cylinder in ${\mathbb R}^4$. One such cylinder is
sketched in Figure~\ref{cil}; the cylinder's profile is a surface with a thin neck where Gaussian curvature is negative; three
soap bubbles $S_v$ are shown, getting larger but never exiting the domain $\, (\mbox{\rm neck})\times{\mathbb R}$.

\begin{figure}[ht]
\includegraphics[scale=0.4]{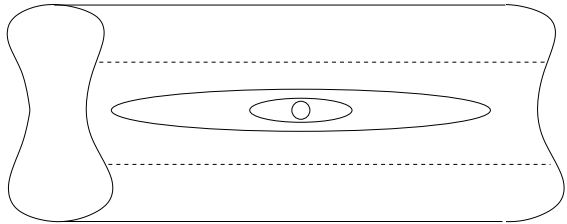}
\caption{Bubbles on a cylinder in ${\mathbb R}^4$}
\label{cil}
\end{figure}

Since $X$ is bounded and $\R$ is infinitely large, we can think of $X\times\R$ as a ``slightly
thickened Euclidean space''. Then Theorem~\ref{large-isop} exhibits large isoperimetric regions in this space as ``slightly thickened Euclidean isoperimetric regions'', and Theorem~\ref{large-bubble} gives a condition under which soap bubbles have the same
behavior. The mean curvature of spheres in $\R$ is proportional to $\, 1/\mbox{\rm radius}$, hence proportional to $(\mbox{\rm volume})^{-1/n}$. We expect a similar estimate in $X\times\R$; yet the bubbles $S_v$ of Theorem~\ref{teo-estables} become arbitrarily large while their mean curvature decreases to a positive constant. All these ideas are reflected in the following theorem, which is essential in the proof of Theorems \ref{large-isop} and~\ref{large-bubble}.

\begin{thm}\label{teo-estimates}
An isoperimetric region in $X\times\R$ of volume $v$  has the
following bound for the mean curvature of its boundary:
\begin{equation}\label{isop-H}
H\leq n\, \big(\omega_n |X|\big)^{1/n}\, v^{-1/n}
\; , \end{equation}
where $|X|$ denotes the Riemannian volume of $X$.

Given $v>0$, for $r$ sufficiently large (depending on $v$), isoperimetric regions
in $X\times\overline{B}(y,r)$ of volume $v$ satisfy
the following bound:
\begin{equation}\label{b-isop-H}
H\leq 2\, n\, \big(\omega_n |X|\big)^{1/n}\, v^{-1/n}
\; . \end{equation}
Suppose that $S\subset X\times\R$ is the closure of the rotated graph
of $u:A\to{\mathbb R}^+$, that it is not too small, and that
one of the following conditions is satisfied:
\begin{list}{}{}
\item[--] either: $S$ is isoperimetric in $X\times\R$ or in
some large $X\times\overline{B}(y,r)$,
\item[--] or: $S$ is a soap bubble and $X$ has $\,\mbox{\rm Ric}\geq 0$,
\end{list}
then we have the estimates:
\begin{equation}\label{H-osc-bound}
\max u-\min u\leq\mbox{\rm const}\quad ,\quad H\leq\frac{\mbox{\rm const}}{\max u}
\; . \end{equation}
For a large soap bubble in $X\times\R$ of volume $v$,
and without the condition $\,\mbox{\rm Ric}\geq 0$, we only have the weaker estimates:
\begin{equation}\label{H-max-bound}
\max u\leq \mbox{\rm const}\cdot v^{1/n}\quad ,\quad H\leq\mbox{\rm const}
\; . \end{equation}
The four constants depend only on $n$ and $X$.
\end{thm}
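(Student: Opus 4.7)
By Theorem~\ref{sym-smooth} every hypersurface $S$ under consideration is the closure of the rotated graph of a function $u\co A\to\mathbb{R}^+$; in the parametrisation $(x,\theta)\in A\times S^{n-1}\mapsto(x,y+u(x)\theta)$ the induced area element is $n\omega_n u^{n-1}W\,dx_X$ with $W=\sqrt{1+|\nabla u|^2}$, the enclosed volume is $V=\omega_n\int_A u^n\,dx_X$, and the first variation formula produces the quasilinear CMC equation
\[
H\;=\;\frac{(n-1)W}{u}\;-\;\frac{1}{u^{n-1}}\,\mathrm{div}_X\!\left(\frac{u^{n-1}\nabla u}{W}\right).
\]
Multiplying by $u^n$ and integrating over $A$---integration by parts on the divergence term has no boundary contribution, because $u\to 0$ smoothly at $\partial A$ makes $u\cdot u^{n-1}\nabla u/W$ vanish there---yields the basic identity
\[
HV\;=\;P\;-\;\omega_n\int_A\frac{u^{n-1}}{W}\,dx_X,
\]
whose correction term is nonnegative, so in particular $HV\leq P$.

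For (\ref{isop-H}) the region $X\times B(y,\bar u)$ with $\bar u=(v/(\omega_n|X|))^{1/n}$ is an admissible competitor of volume $v$ in $X\times\R$, bounding $P$ by $n(\omega_n|X|)^{1/n}v^{(n-1)/n}$; combined with $HV\leq P$ this gives the inequality. For (\ref{b-isop-H}) the same argument works inside $X\times\overline{B}(y,r)$ once the existence argument of Section~\ref{sec-existence} guarantees $\bar u<r$, and the factor $2$ absorbs a mild boundary correction in case the isoperimetric region happens to touch $X\times S(y,r)$.

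The oscillation estimate (\ref{H-osc-bound}) is the heart of the theorem. My plan is to establish a uniform gradient bound $|\nabla u|\leq C(n,X)$; since $X$ is compact, integration along a geodesic joining the maximum to the minimum of $u$ then gives $\max u-\min u\leq C\,\mathrm{diam}(X)$. In the isoperimetric setting the stability of $S$ plus the $H$-bound from the first part feeds a test function built from $u$ (compensated against the normal components of the translational Killing fields $\partial_{z^i}$, which lie in the kernel of the Jacobi operator) into the index form, producing an $L^2$ bound on $|\nabla u|$; a Moser iteration applied to the linearised CMC equation upgrades this to a pointwise bound. In the soap-bubble case with $\mathrm{Ric}_X\geq 0$, I would run a Bochner computation on $W$ in which the $X$-Ricci term appears with the favorable sign, so that the maximum principle applied to the resulting second-order inequality delivers the gradient bound. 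With oscillation in hand, the companion inequality $H\leq\mathrm{const}/\max u$ follows by applying the maximum principle to the CMC equation at a maximum of $u$ (giving $H\geq (n-1)/\max u$) and at a minimum, and combining with $\max u-\min u\leq C$. The weaker estimates (\ref{H-max-bound}) for an arbitrary large soap bubble are obtained from the identity and the CMC PDE alone, bypassing both stability and Ricci. The real obstacle is the Bochner step under $\mathrm{Ric}_X\geq 0$: Theorem~\ref{teo-estables} exhibits stable soap bubbles of arbitrarily large enclosed volume with unbounded oscillation when the Ricci curvature of $X$ changes sign, so the argument must genuinely exploit the sign hypothesis rather than bound $|\nabla u|$ purely from the PDE structure.
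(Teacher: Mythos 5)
Your derivation of $vH\leq \mathrm{area}(S)$ and of (\ref{isop-H}) by comparison with the cylinder competitor is essentially the paper's argument (the paper phrases it as a first variation under the radial field $\xi=\sum y_i\partial_{y_i}$ rather than integrating the CMC equation against $u^n$, but these are the same computation). From there on, however, there are genuine gaps.

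First, (\ref{b-isop-H}): the factor $2$ is not a ``mild boundary correction''. In the obstacle case the first variation only yields $\bigl(v-{\mathcal H}^d(\Om_{\mbox{\rm\scriptsize thick}})\bigr)H\leq\mbox{\rm area}(S)$, where $\Om_{\mbox{\rm\scriptsize thick}}=X_r\times B(0,r)$ is the part of $\Om$ pressed against the obstacle. This inequality is vacuous unless one proves that the thick part carries less than half the volume for $r$ large, which in the paper is a separate lemma using the isoperimetric constant of $X$ and the coarea formula to derive two explicit upper bounds on $r$ from the contrary assumption. You need to supply this.

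Second, and more seriously, your plan for (\ref{H-osc-bound}) inverts the logical order in a way that is circular with respect to the tools actually available. The paper's gradient estimate (the Appendix) takes the oscillation bound $\max u-\min u\leq C$ as an \emph{input} and produces $|\nabla u|\leq C'(n,X,C)$; it cannot be used to prove the oscillation bound. You propose to first prove an a priori gradient bound independent of oscillation (via stability, Moser iteration, or a Bochner identity), and you yourself flag the Bochner step as ``the real obstacle'' without carrying it out, so the central estimate of the theorem is not proved. The paper's actual mechanism is quite different and does not pass through gradient bounds at all: (i) an upper bound on $H$ comes first --- from (\ref{b-isop-H}) in the isoperimetric case, and from a Serrin--Meeks maximum-principle argument applied to $\rho-p\,\rho_\nu$ (using the Schr\"odinger equation satisfied by $\rho_\nu$ and Newton's inequality $|II|^2\geq H^2/(d-1)$, with $p=\rho_1/4$ under $\mathrm{Ric}\geq0$ and $p=d/H$ in general) in the soap-bubble case; (ii) the $H$-bound feeds the monotonicity formula, and a sphere-packing argument in the $\R$ factor gives the lower area bound $\mathrm{area}(S)\geq\mathrm{const}\cdot(\rho_1^n-\rho_0^n)$; (iii) an upper area bound ($\leq\mathrm{const}\cdot\rho_1^{n-1}$ from the cylinder competitor, or from $\mathrm{area}(S)\leq\frac{n}{n-1}Hv$ in the soap-bubble case) then forces $\rho_1-\rho_0\leq\mathrm{const}$. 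None of steps (i)--(iii) appears in your proposal, and the same machinery (plus a dichotomy on $\rho_1/\rho_0$) is what yields (\ref{H-max-bound}), which you assert follows ``from the identity and the CMC PDE alone''.
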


The paper is organized as follows. The boundedness part of Theorem~\ref{sym-smooth}
is proved in Section~\ref{sec-monotonicity}, the symmetry part in Section~\ref{sec-symmetry},
and the regularity part in Section~\ref{sec-regularity}. In Section~\ref{sec-estimates-1} 
we prove the part of Theorem~\ref{teo-estimates} about isoperimetric boundaries, and
in Section~\ref{sec-estimates-2} the part for soap bubbles. In 
Section~\ref{sec-existence} we prove existence of isoperimeric regions of every volume. 
The main theorems (\ref{large-isop}) and (\ref{large-bubble}) are proved 
in Section~\ref{sec-main}, based on the estimate 
(\ref{H-osc-bound}) in Theorem~\ref{teo-estimates} and a gradient bound
from the Appendix. In Section~\ref{sec-estables} we construct the families $\{ S_v\}$
of Theorem~\ref{teo-estables}. In Section~\ref{sec-stability} we show that in some of
these families all large bubbles are stable.

\begin{ack}
I owe special thanks to Wu--Yi Hsiang for
communicating this beautiful problem to me.  Frank Morgan gave me hints
as to why the examples of Theorem~\ref{teo-estables} should exist.  Bruce
Kleiner and Antonio C\'ordoba taught me some of the techniques used here. At
times I received important support from Lars Kadison. The encouragement
from Antonio Ros, Manuel Ritor\'e, Robert Kusner, and many others has also been very important.
\end{ack}


\section{Monotonicity formula}\label{sec-monotonicity}

In this section we shall obtain a lower estimate  (\ref{area-estimate}) for area and use it to prove that every isoperimetric region in $X\times\R$ is bounded. We shall also use (\ref{area-estimate}) in later sections.

We use the monotonicity formula proved in 
\cite[pages 483-484]{KKS}, which is true for all hypersurfaces
of constant mean curvature. Their proof is for a Euclidean
ambient space. We adapt it here
and  we indicate the small changes needed to
make it work in all ambient Riemannian manifolds.

For each $m$ we use ${\mathcal H}^m$ to denote $m$-dimensional Hausdorff measure.

\begin{defn}
Let $M$ be a Riemannian manifold of dimension $d$ and let $S\subset M$ be a hypersurface. By \textbf{mean curvature} of $S$ we mean the scalar function $H:S\to{\mathbb R}$ such that for any deformation $\{ S_t\}$ of any compact piece $S_0\subset S$ we have:
\begin{equation}\label{def-H}
\left.\frac{d}{dt}\right|_{t=0} {\mathcal H}^{d-1}(S_t)\; =\; \int_{S_0}\xi\cdot (H\nu )\, d{\mathcal H}^{d-1}
+\int_{\partial S_0}(\xi\cdot\eta )\, d{\mathcal H}^{d-2}
\; ,\end{equation}
where $\xi$ is the field of the velocities of motion for each point during the deformation, $\nu$ is a unit normal for $S$, and $\eta$ is the outer conormal to $\partial S_0$ in~$S$.

For example, the unit sphere in ${\mathbb R}^d$ has $H=d-1$ with respect to the outer unit normal.
\end{defn}

In \cite{KKS} they consider a smooth piece of hypersurface in $S\subset {\mathbb R}^d$ that
is of the form  $S=B(z_0,r_1)\cap \partial\Om$ for some region $\Om$,
and whose mean curvature is a positive constant $H$. The point $z_0$ is assumed to lie on $S$. Then
define, for $0<s<r_1$, the following objects:
\begin{eqnarray*}
S(s) &=& S\cap  \overline{B}(z_0,s)\; ,
\\   U(s)&=& \Om\cap \overline{B}(z_0,s)\qquad\mbox{\rm (a solid)}\; ,
\\   Q(s)&=&\Om\cap\partial B(z_0,s)\;\quad\mbox{\rm (a spherical piece)}\; ,
\\ \nu_s &=& \mbox{\rm the outer unit normal along }\;\partial U(s)\; ,
\end{eqnarray*}
and obtain a differential inequality satisfied by the area function
$a(s)\equiv{\mathcal H}^{d-1} \big( S(s)\big)$, and use it to estimate
$a(s)$ from below.

Choose orthonormal coordinates $z_1,...,z_d$ centered at $z_0$ and
consider the vector field ${\bf V}\equiv
z_1\,\partial_{z_1}+\cdots +z_d\,\partial_{z_d}$, whose flow is
$\varphi_t (z)=e^t\, z$ and whose divergence is $d$. By formula (\ref{def-H}), compute:
\begin{eqnarray*}
(d-1)\, a(s) &=&\left.\frac{d}{dt}\right|_{t=0}
e^{(d-1)t}\, a(s) \; =\;\left.\frac{d}{dt}\right|_{t=0}
{\mathcal H}^{d-1}\big( \varphi_t(S(s))\big)\; =\\
&=&\int_{S(s)}H\,\nu\cdot{\bf V} +
\int_{\partial S(s)}\eta\cdot{\bf V}\; =\;\\
&=& H\,\int_{\partial U(s)}{\bf V}\cdot\nu_s -H\,\int_{Q(s)}{\bf
V}\cdot\nu_s +\int_{\partial S(s)}\eta\cdot{\bf V}
\; . \end{eqnarray*}
Then estimate the three summands in the last expression:
\begin{eqnarray*}
H\,\int_{\partial U(s)}{\bf V}\cdot\nu_s &=& d\,
H\,{\mathcal H}^{d}(U(s))\;\leq\; d\,\omega_d\, H\, s^d\; ,
\\  -H\,\int_{Q(s)}{\bf V}\cdot\nu_s &\leq & 0\; ,
\\   \int_{\partial S(s)}\eta\cdot{\bf V} &\leq & s\,\frac{d}{ds}\; a(s)
\; .  \end{eqnarray*}
The third inequality follows from the coarea formula for the
function $\sqrt{y_1^2+\cdots +y_d^2}$.

We now have $(d-1)\, a(s)\leq  d\,\omega_d\, H\,
s^d+s\,\frac{d}{ds}\; a(s)\,$, or equivalently:
\[
\frac{d}{ds}\; \big( s^{1-d}\, a(s)\big)\geq -d\,\omega_d\, H\;
.\]
In the case $H=0$ this differential inequality says that $s^{1-d}\,
a(s)$ is a monotone increasing function. This is why it is called
monotonicity formula.

We also have  $\lim\limits_{s\to 0} s^{1-d}\,
a(s)=\omega_{d-1}\,$, which combined with the monotonicity formula
gives the following lower bound for area:
\[
a(s)\;\geq\; \big(\,\omega_{d-1}-d\,\omega_d\, H\, s\, \big)\, s^{d-1}\; .
\]
For a non--Euclidean ambient space $M$ the above proof needs the
following modifications.  Choose $(z_1,...,z_d)$ to be canonical
coordinates at $z_0$, i.e.\ coordinates for which the Christoffel
symbols vanish at $z_0$. As long as we keep $s$ small, the identities used
above are all true in an approximate way. As examples: while in the Euclidean case
we had $\nabla{\bf V} =\mbox{\rm id}$, now we have
$\nabla{\bf V} =\mbox{\rm id}+ \mbox{\rm O}(s)$;  while in the Euclidean
case ${\mathcal H}^{d}\big( B(z_0,s)\big) =\omega_d\, s^d$, now it is
${\mathcal H}^{d}\big( B^M(z_0,s)\big) =\big( 1+\mbox{\rm O}(s)\big)\,\omega_d\, s^d$.

Notice also that the calculation carries through for isoperimetric boundaries, provided $z_0$ is not a singular point.

Then for small $s$ we get $\frac{d}{ds}\; \big( s^{1-d}\, a(s)\big)\geq
-c_1\, H$ \ where \ $c_1$ \ is some positive constant close to $d\,\omega_d$ in value, and we
obtain our desired lower estimate for area:
\begin{equation}\label{area-estimate}
a(s)\;\geq\; \big(\,\omega_{d-1}-c_1\, H\, s\, \big)\, s^{d-1}\; .
\end{equation}
In a complete manifold with bounded geometry (as is
$X\times\R$) the constant $c_1$ and the radius of a
ball where the above proof is valid may be chosen the same for all
points $z_0$. Then formula (\ref{area-estimate}) provides a
range of radii for which we have a lower area bound near every regular point of $S$. The larger
$H$ is, the shorter that range of radii is:
formula (\ref{area-estimate}) is useful only in combination
with some upper bound for~$H$.

\begin{prop}
Every isoperimetric region in $X\times\R$ is bounded.
\end{prop}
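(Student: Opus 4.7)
My plan is to argue by contradiction, using the lower area bound (\ref{area-estimate}) at many pairwise disjoint balls. If $\Omega\subset X\times\R$ were an unbounded isoperimetric region, I would exhibit infinitely many disjoint balls in $X\times\R$ each catching a uniformly positive amount of $\mathcal{H}^{d-1}(\partial\Omega)$, contradicting the finiteness of the perimeter. Note that $P(\Omega)$ is indeed finite: for the volume $|\Omega|$ one can construct a competitor of the form $X\times B(y,r)$ with perimeter $|X|\cdot\mathcal{H}^{n-1}(S(y,r))<\infty$ (because $X$ is closed), so the constant mean curvature $H$ of the smooth part of $\partial\Omega$ is a fixed finite number.

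The next step is to extract regular boundary points escaping to infinity. If $\partial\Omega$ were contained in $X\times\overline{B}(0,r)$ for some $r$, then $\Omega\cap\bigl(X\times(\R\setminus\overline{B}(0,r))\bigr)$ would be clopen in $X\times(\R\setminus\overline{B}(0,r))$, hence a (possibly empty) union of connected components of that open set; every such component has infinite volume, forcing the intersection to be empty and contradicting the unboundedness of $\Omega$. So $\partial\Omega$ is unbounded, and since its singular set has codimension at least $8$, the regular part is dense, allowing me to pick a sequence of regular boundary points $p_k\in\partial\Omega$ with $|y(p_k)|\to\infty$.

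Now I invoke the uniform form of (\ref{area-estimate}) recorded at the end of this section: there are a single $c_1$ and a single radius of validity $R_0$ that work at every point of the bounded-geometry manifold $X\times\R$. Choose $s_0\in(0,R_0)$ with $s_0\leq \omega_{d-1}/(2c_1H)$, so that (\ref{area-estimate}) applied at any regular boundary point $p$ with radius $s_0/2$ yields
\[
\mathcal{H}^{d-1}\bigl(\partial\Omega\cap\overline{B}(p,s_0/2)\bigr)\;\geq\;\frac{\omega_{d-1}}{2}\,(s_0/2)^{d-1}\;=:\;c_*\;>\;0.
\]
After extracting a subsequence of the $p_k$'s with $|y(p_j)-y(p_k)|>s_0$ for $j\neq k$, the Riemannian balls $B(p_k,s_0/2)\subset X\times\R$ are pairwise disjoint (since distance in $X\times\R$ dominates Euclidean distance in the $\R$-factor), and summing the local area bounds gives $P(\Omega)\geq\sum_k c_*=\infty$, the required contradiction.

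The main point that needs care is the uniformity of the monotonicity constants $c_1$ and $R_0$ over all base points, since the $p_k$ march off to infinity; but this is exactly the content of the bounded-geometry comment following (\ref{area-estimate}), and it is automatic here because $X\times\R$ is the Riemannian product of a closed manifold with Euclidean space.
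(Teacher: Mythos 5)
Your argument is correct and is essentially the paper's own proof: a uniform lower area bound at regular boundary points coming from the monotonicity formula (\ref{area-estimate}), applied on infinitely many pairwise disjoint balls around boundary points escaping to infinity, contradicting finiteness of the perimeter. The extra details you supply (the cylinder competitor giving finite perimeter, and the clopen argument showing $\partial\Omega$ itself must be unbounded) are steps the paper leaves implicit, and they check out.
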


Let $d$ be the dimension of $X\times\R$. An isoperimetric boundary $\partial\Om$ has constant mean curvature. No matter how large this constant is, it has a fixed value and
thus provides a positive (if small) value $r_0$ and a constant $\varepsilon >0$ such that ${\mathcal H}^{d-1}\big(\partial\Om
\cap B^{X\times\R}(z_0,r_0)\big)>\varepsilon$ for every non-singular $z_0\in\partial\Om$. If $\Om$ were unbounded then $\partial\Om$ would be unbounded and it would contain an infinity of non-singular points $z_j$ with pairwise distances all greater than $2r_0$. But then the intersections $\partial\Om \cap B^{X\times\R}(z_j,r_0)$would be pairwise disjoint, the area of $\partial\Om$ would be infinite and $\Om$ could not be isoperimetric.


\section{Symmetry}\label{sec-symmetry}

In this section we prove the symmetry part of Theorem~\ref{sym-smooth}.

If $S\subset X\times\R$ is a soap bubble, we can use A. D. Alexandrov's reflection method,
as described e.g.\ in \cite{Hopf}, to prove that $S$ is a
union of coaxial spheres $\{ x\}\times S\big( y,u(x)\big)$ as $x$ ranges over the image of $S$ under the projection $X\times\R \to X$.

This description forbids, in particular, that some
parts of $S$ be surrounded by others. This is
a rather obvious consequence of the maximum principle because
the mean curvature is \textit{the same constant\/} in all connected
components. In the case $n=1$, assuming that the axis is $X\times\{ 0\}$, the soap bubble
is the union of the graphs of $u$ and  $-u$.

Consider now the case of an isoperimetric region in $X\times\R$ or
in $X\times\overline{B} (y,r)$. There are several symmetrization procedures associated with the
names of Steiner and Schwarz, see for instance  
\cite[page 78]{BZ}. All have the effect of preserving the volume of a
(sufficiently smooth)  set without increasing its boundary area.

We consider here the following symmetrization procedure
in an arbitrary ambient manifold $M$. Fix a
Killing vector field $\bf V$ which admits an orthogonal
hypersurface $M_1\subset M$.  If $\Om$ is the region which is to
be symmetrized, then for each orbit $\gamma$ of $\bf V$
one replaces the intersection $\gamma\cap\Om$ with a segment
$\gamma_{\Om}\subset\gamma$ centered at the point $\gamma\cap
M_1$ and having the same one--dimensional measure
as~$\gamma\cap\Om$. If $\gamma\cap\Om$ is empty, then let
$\gamma_\Om$ be also empty. The symmetrized set
\[ S\Om\;\stackrel{\mbox{\rm\scriptsize
def}}{=}\;\bigcup_{\gamma}\gamma_{\Om}
\] has the same volume as $\Om$ and is symmetric with respect to
$M_1$. We claim that if $\Om$ is sufficiently regular then
the boundary area of $S\Om$ is at most that of~$\Om\,$.

For regions with enough regularity one has three equivalent
notions of boundary area:
\begin{list}{}{}
\item[--] The standard
area of the regular part of $\partial\Om\,$.
\item[--] Perimeter, see e.g.~\cite{Giusti}.
\item[--] {\bf Minkowski content,} \ defined as:
\[ \lim\limits_{h\to 0}\displaystyle\frac{1}{h} \big(\, \mbox{\rm Vol}\,
(\Om^h)-\mbox{\rm Vol}\, (\Om )\,\big)\quad ,\quad\mbox{\rm with \ }
\Om^h=\{\; z\;\; |\;\;\mbox{\rm dist}\, (z,\Om )\leq h\;\}\;
.\]
\end{list}
Isoperimetric regions have enough regularity so that these three
notions coincide, see e.g.~\cite{Maggi}.

For such regions we can now explain why $\mbox{\rm area}\, (\partial
S\Om)\leq\mbox{\rm area}\, (\partial\Om)$. Using Minkowski content
to compute boundary area, the claimed inequality follows from
$(S\Om )^h\subseteq S(\Om^h)\,$. The proof of this inclusion in
\cite[pages 78-79]{BZ} only requires that the flow of $\bf
V$ preserve distance and one--dimensional measure, hence it applies
to the general setting we have described. See also  
\cite[page 203]{Ros} for explicit pictures.

The existence of the pair ${\bf V},M_1$ provides local
coordinates $v_1,...,v_m$ in $M$  with respect to which the metric is
expressed as  $g\equiv g_0+g_{mm}\, dv_m^2\,$, where $g_0$ is a
metric on $(v_1,...,v_{m-1})$--space and the function $g_{mm}$ is independent of
$v_m$. Then the argument in  \cite[pages 108-111]{BZ} applies to
show that $S\Om$ has strictly less boundary area than $\Om$ unless
$\Om$ satisfies the following two conditions:
\begin{list}{}{}
\item[--] $\Om$ was already symmetric to start with (with respect to some image
of $M_1$ under the flow of $\bf V$). \item[--] $\Om$ is ``convex
in the direction of $\bf V$''. This means that each orbit of $\bf
V$ intersects $\Om$ in an orbit segment or the empty set.
\end{list}

\vspace{2mm}

We apply these conclusions to $M=X\times\R$ and
choose $\bf V$ to be any constant vector field along the $\R$ factor.

In the case of a region $\Om$ which is isoperimetric
in $X\times\R$, we conclude there is a point
$y\in\R$ such that $\Om$ is symmetric with
respect to all hypersurfaces
\[ X\times\mbox{\rm (Euclidean hyperplane through
}y) \] and convex in the direction of the $\R$
factor, hence a union of coaxial balls with $X\times\{ y\}$ as common axis.

In the case $\Om$ is isoperimetric in $X\times\overline{B}(y,r)$,
we let $P\subset\R$ be the Euclidean hyperplane through
$y$  orthogonal to the direction of $\bf V$  and choose $M_1=X\times P$
as hypersurface orthogonal to~$\bf V$. We notice
two properties of $X\times \overline{B}(y,r)$: it is symmetric
with respect to $M_1$ and
intersects any orbit of $\bf V$ in a line segment.
They imply that for every $\Om\subset X\times\overline{B}(y,r)$ the symmetrized region
$S\Om$ is completely contained in $X\times \overline{B}(y,r)$.
We conclude that there is a
point $y'\in B(y,r)$ such that $\Om$ is
symmetric with respect to all hypersurfaces
\[ X\times\mbox{\rm (Euclidean hyperplane through
}y')\; ,\] and convex in the direction of the $\R$ factor. Hence $\Om$ is a union
of coaxial balls $\{ x\}\times\overline{B}\big( y',u(x)\big)$. Notice that if $u(x)$ achieves the value~$r$
then necessarily $y'=y$. If $\max u < r$, then $y$ and $y'$ may be different.

\vspace{3mm}

We finally make a comment about the function $u$. Denote by $\Om$ the region bounded by a soap bubble, or an isoperimetric region in $X\times\R$, or an isoperimetric region in $X\times\overline{B}(y,r)$. The interior $U$ of $\Om$ is an open set in $X\times\R$, thus its image under the projection to the $X$ factor is an open set $A\subseteq X$. For each $x\in A$ the value $u(x)$ must be
positive, because the intersection of $U$ with the slice $\{ x\}\times\R$ must be a non-empty open ball. Therefore $U$ is the union of the non-empty open balls $\{ x\}\times B\big( y,u(x)\big)$ as $x$ ranges over~$A$. Such a union of balls is open in $X\times\R$ if and only if $u$  is lower semicontinuous. We are going to see in the next section that $u$ is actually much more regular.


\section{regularity}\label{sec-regularity}

Let $u$ and $A$ be as described in Section~\ref{sec-symmetry}. In
this section we prove that $u$  is ${\mathcal C}^1$ in $A$
and that it is smooth in $A\cap\{ u<r\}$. We also show that
$u$ is continuous in its whole domain (the closure of $A$)
and vanishes on the frontier $\overline{A}\setminus A$.

Let $S$ be a soap bubble or an isoperimetric boundary, in all of $X\times\R$ or in $X\times\overline{B}(y_0,r)$.
We may assume without loss of generality that $y_0=0$ and that $X\times\{ 0\}$ is the symmetry axis of~$S$.

Define a function  $\rho :X\times\R\to{\mathbb R}$ as
follows:
\[ \rho (x,y)=\mbox{\rm dist}\, \big(\, (x,y)\, ,\, X\times\{ 0\}\,\big) =\sqrt{y_1^2+\cdots +y_n^2}\; .\] This function is
smooth away from $X\times\{ 0\}$.

The singular set of $S$, if non-empty, is compact and projects to a compact set in $X$ whose codimension in $X$ is at least~7. The reason for this is that, due to the invariance of $S$ under rotations of the $\R$ factor, any image in $X$ of a singular point comes
from a whole $S^{n-1}$--worth of singular points on~$S$.

Denote by $S_0$ the regular part of $S\cap \{ \rho >0\}$. In $S_0$ there is defined an outer unit normal $\nu$. We denote by $\rho_\nu$ the derivative of $\rho$ along this normal.

Let $\pi :S\to X$ be the restriction of the projection $X\times\R\to X$. Let $\Om$ be the region bounded by $S$. The interior
of $\Om$ is the union of the coaxial balls $\{x\}\times B\big(0,u(x)\big)$ for some
function $u:A\to{\mathbb R}^+$ that may be described
as $u=\rho\circ\pi^{-1}$. At a point  $z\in S_0$ where $\rho_\nu\neq 0$,
the map $\pi$ is a submersion; thus $u$ is near $\pi (z)$ as regular as $S_0$ is near $z$ (smooth or ${\mathcal C}^1$, depending on the case).
On the other hand, if $\rho_\nu (z)=0$ then the gradient of $u$ is infinite at~$\pi (z)$. In order to prove the regularity part of Theorem~\ref{sym-smooth}, we study the vanishing of~$\rho_\nu$.

\begin{lem}\label{lemma-H.}
Let $M$ be a Riemannian manifold and $S\subset M$ a hypersurface. Let $\xi$ be a vector field on $M$ and let $\varphi_t$ be the flow of $\xi$. Denote by $H$ the mean curvature function of $S$ and by $H_t$ the same for $\varphi_t(S)$. Write $\nu$ for the unit normal of $S$ and decompose $\xi =\xi^\top+f\,\nu$. Then for each $p\in S$ we have:
\begin{equation}\label{formula-H.}\left.\frac{\partial}{\partial t}\right|_{t=0}\, H_t\big(\varphi_t(p)\big)\; =\;
\xi^\top_p H-\big(\, \mbox{\rm Ric}(\nu ,\nu )+| II|^2\,\big)_p\, f(p)-(\Delta^S f)_p\; ,\end{equation}
where $\,\mbox{\rm Ric}\,$ is the Ricci tensor of $M$ and $II$ is the second fundamental form of~$S$.
\end{lem}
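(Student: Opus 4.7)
The approach is to split $\xi = \xi^\top + f\nu$ (with $f = \langle\xi,\nu\rangle$) and use the linearity in $\xi$ of both sides of (\ref{formula-H.}), handling the two summands separately.

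For the tangential part $\xi = \xi^\top$, the flow $\varphi_t$ preserves $S$ locally near $p$, since it acts there as a family of diffeomorphisms of $S$. Thus $\varphi_t(S) = S$ near $p$, and $H_t = H$ under the identification $\varphi_t(p)\leftrightarrow p$, so $H_t(\varphi_t(p)) = H(\varphi_t(p))$. Differentiating at $t=0$ produces the term $\xi^\top_p H$, with no curvature or Laplacian contribution.

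The main case is $\xi = f\nu$. At $p$ I would work in a local orthonormal frame $\{e_i\}$ tangent to $S$ and invoke the standard first-variation identities for the induced metric $g_{ij}$, the unit normal $\nu$, and the second fundamental form $h_{ij}$ of $S_t = \varphi_t(S)$: $\dot g_{ij}$ is proportional to $f\,h_{ij}$; $\dot\nu$ is tangential and given (up to sign) by the surface gradient of $f$; and $\dot h_{ij}$ contains the tangential Hessian of $f$, a Riemann term $R(\nu,e_i,\nu,e_j)\,f$, and a quadratic-in-$h$ term. Taking the trace $H = g^{ij}h_{ij}$ and differentiating $g^{ij}$ as well, these organize themselves into $-\Delta^S f$ (from the tangential Hessian), $-\mbox{\rm Ric}(\nu,\nu)\,f$ (from tracing the Riemann term via the Gauss equation), and $-|II|^2\,f$ (from combining the inverse-metric variation with the quadratic piece of $\dot h_{ij}$), giving the normal contribution to (\ref{formula-H.}).

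The only real obstacle is bookkeeping: the sign conventions between the outer normal $\nu$, the paper's convention $H = \mathrm{tr}(II)$ (as fixed by the sphere example $H = d-1$), and the Riemann tensor must all be kept consistent. A sanity check on the sphere of radius $r$ in flat ${\mathbb R}^d$ with $\xi = \nu$ (so $f\equiv 1$ and $\xi^\top = 0$) gives $H(t) = (d-1)/(r+t)$ and $|II|^2 = (d-1)/r^2$, hence $\dot H|_{t=0} = -(d-1)/r^2 = -|II|^2\cdot f$, in agreement with (\ref{formula-H.}). Adding the tangential and normal contributions yields the lemma.
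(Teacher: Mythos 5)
The paper states Lemma~\ref{lemma-H.} without proof, treating it as the standard linearization of the mean curvature operator (the Jacobi operator formula), so there is no argument in the text to compare yours against; your outline is exactly that standard derivation and is correct. The two pillars are sound: the first variation of $H_t(\varphi_t(p))$ depends only on $\xi|_S$ and is linear in it, so the splitting $\xi=\xi^\top+f\nu$ is legitimate; a field tangent to $S$ along $S$ flows $S$ into itself, giving the pure transport term $\xi^\top_pH$; and the normal part reduces to the classical identities for $\dot g_{ij}$, $\dot\nu$, $\dot h_{ij}$, whose trace (remembering to vary $g^{ij}$ as well) yields $-\Delta^Sf-\big(\mbox{\rm Ric}(\nu,\nu)+|II|^2\big)f$. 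One small correction of attribution: the curvature term arises because $\sum_i R(\nu,e_i,\nu,e_i)=\mbox{\rm Ric}(\nu,\nu)$ for an orthonormal tangent frame $\{e_i\}$ of $S$ (the missing $\nu$-direction contributes zero); this is just the definition of the Ricci tensor and does not involve the Gauss equation. Your sphere sanity check correctly pins down the sign conventions consistent with the paper's normalization $H=d-1$ for the unit sphere.
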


If $S$ is an isoperimetric boundary in $X\times\overline{B}(0,r)$, let $S_\infty$ denote $S\cap\{ \rho =r\}$. If $S$ is a soap bubble or an isoperimetric boundary in $X\times\R$, let $S_\infty$ be just the empty set.

\begin{prop}
In $S_0\setminus S_\infty$ the following identity holds:
\begin{equation}\label{Schrodinger}
\Delta^S(\rho_\nu )=\left(\frac{n-1}{\rho^2}-\mbox{\rm Ric}\, (\nu
,\nu )-|II|^2\right)\,\rho_\nu \; ,
\end{equation}
\end{prop}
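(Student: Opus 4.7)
The plan is to decompose $\rho_\nu = \sum_{j=1}^n \hat y_j\,\nu_j$, where $\hat y_j := y_j/\rho$ and $\nu_j := \langle\nu,\partial_{y_j}\rangle$, and to derive (\ref{Schrodinger}) by combining a Jacobi equation for each $\nu_j$ with a spherical Laplacian identity for each $\hat y_j$. Each translation $\partial_{y_j}$ is a Killing field on $X\times\R$, so $H_t$ is constant under its flow; since $S$ has constant mean curvature, also $\xi^\top H = 0$, and formula (\ref{formula-H.}) of Lemma \ref{lemma-H.} collapses to the Jacobi equation $\Delta^S\nu_j = -\bigl(|II|^2+\mbox{Ric}(\nu,\nu)\bigr)\nu_j$ for each $j$. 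The product-rule expansion
\[
\Delta^S\rho_\nu = \sum_j\bigl[\nu_j\,\Delta^S\hat y_j + 2\,\langle\nabla^S\hat y_j,\nabla^S\nu_j\rangle + \hat y_j\,\Delta^S\nu_j\bigr]
\]
then takes care of the $-(|II|^2+\mbox{Ric}(\nu,\nu))\rho_\nu$ contribution in (\ref{Schrodinger}) via the third piece.

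For the first piece I use the rotational symmetry of $S$ established in Section \ref{sec-symmetry}. In the parametrization $\Phi(x,\omega) = (x,u(x)\omega)$ with $x\in A$ and $\omega\in S^{n-1}$, the induced metric on $S_0$ is block-diagonal, equal to $(g^X+du\otimes du)\oplus u^2 g^{S^{n-1}}$, and $\hat y_j$ pulls back to the coordinate $\omega_j$ on the sphere factor. For functions depending only on $\omega$ the Laplacian $\Delta^S$ reduces to $u^{-2}\Delta_{S^{n-1}}$ (the warping contribution in the volume form cancels), and the standard identity $\Delta_{S^{n-1}}\omega_j = -(n-1)\omega_j$ gives $\Delta^S\hat y_j = -(n-1)\hat y_j/\rho^2$; summing over $j$ yields $\sum_j\nu_j\Delta^S\hat y_j = -(n-1)\rho_\nu/\rho^2$. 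Note the sign is the opposite of the $+(n-1)/\rho^2$ in (\ref{Schrodinger}), so the cross-term must contribute twice this amount with a plus sign.

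The main obstacle is precisely this cross-term $2\sum_j\langle\nabla^S\hat y_j,\nabla^S\nu_j\rangle$. Using the same parametrization, the outward unit normal works out to $\nu = W\,(\omega - \nabla^X u)$ with $W := 1/\sqrt{1+|\nabla^X u|^2}$ a function of $x$ alone, whence $\nu_j = W\omega_j$ and $\rho_\nu = W$. Splitting $\nabla^S\nu_j = W\,\nabla^S\omega_j + \omega_j\,\nabla^S W$, block-diagonality of the induced metric on $S$ forces $\langle\nabla^S\omega_j,\nabla^S W\rangle_S = 0$, since $\nabla^S\omega_j$ lies in the $\omega$-block of $TS$ while $\nabla^S W$ lies in the $x$-block. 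What survives is $W\sum_j|\nabla^S\omega_j|^2 = W(n-1)/u^2 = (n-1)\rho_\nu/\rho^2$, using $|\nabla^S\omega_j|^2 = (1-\omega_j^2)/u^2$ together with $\sum_j\omega_j^2 = 1$. Doubling this and adding it to the previous two pieces produces precisely the right-hand side of (\ref{Schrodinger}).
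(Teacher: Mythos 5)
Your proof is correct, and the computations check out, but it follows a genuinely different route from the paper's. The paper works near a point with $\rho_\nu\neq 0$, flows the local graph $\{\rho=u(x)\}$ along $\nabla\rho$ to get the family $S_t=\{\rho=t+u(x)\}$, computes $H_t$ explicitly from the rotated-graph formula (\ref{H-u}) to obtain $\left.\partial_t\right|_{t=0}H_t=-\frac{n-1}{\rho^2}\,\rho_\nu$, and equates this with the expression furnished by Lemma~\ref{lemma-H.} applied to $\xi=\nabla\rho$; the term $\frac{n-1}{\rho^2}$ thus arises as the $t$-derivative of the spherical part $\frac{n-1}{u\sqrt{1+|\nabla u|^2}}$ of $H$. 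You instead split $\rho_\nu=\sum_j\hat y_j\,\nu_j$, feed the genuine Killing fields $\partial_{y_j}$ into Lemma~\ref{lemma-H.} to get the Jacobi equations $\Delta^S\nu_j=-\big(|II|^2+\mbox{\rm Ric}(\nu,\nu)\big)\nu_j$, and produce the $\frac{n-1}{\rho^2}$ shift from the first spherical harmonics plus a cross term. Your version is arguably more conceptual --- it exhibits $\rho_\nu$ as assembled from Jacobi fields and pinpoints the origin of the zeroth-order shift --- at the cost of having to control the cross term, which you do correctly via the block-diagonal warped-product form $(g^X+du\otimes du)\oplus u^2g^{S^{n-1}}$ of the induced metric; your normal $\nu=W(\omega-\nabla^Xu)$, the identity $\rho_\nu=W$, and the sums $\sum_j|\nabla^S\omega_j|^2=(n-1)/u^2$ and $\Delta^S\omega_j=-(n-1)\omega_j/u^2$ are all right.

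One step is missing. The parametrization $\Phi(x,\omega)=(x,u(x)\omega)$ with $u$ smooth is available only near points of $S_0$ where $\rho_\nu\neq 0$ (exactly the points where $\pi$ is a submersion), so your computation of $\Delta^S\hat y_j$ and of the cross term is a priori valid only on the open set $\{\rho_\nu\neq 0\}$, whereas the proposition asserts the identity on all of $S_0\setminus S_\infty$. The paper closes this gap by noting that both sides of (\ref{Schrodinger}) are smooth on $S_0\setminus S_\infty$ and vanish identically on the interior of $\{\rho_\nu=0\}$, so the identity propagates to the frontier of that set by continuity. Your argument needs the same final remark; with it included, the proof is complete.
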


\begin{proof}
Both sides of equality (\ref{Schrodinger}) are zero on the interior of the set $S_0\cap\{\rho_\nu =0\}$. Also, both sides are smooth everywhere on $S_0\setminus S_\infty$. Therefore if we prove the identity on $(S_0\setminus S_\infty )\cap\{\rho_\nu\neq 0\}$ it will also be true on the frontier points of $S_0\cap\{\rho_\nu =0\}$ because these are limits of points where $\rho_\nu\neq 0$.

Let $z_0=(x_0,y)\in S_0\setminus S_\infty$ be a point with $\rho_\nu (z_0)\neq 0$. The function $u$ is smooth in some neighborhood $U^{x_0}$ of $x_0$ in $X$, and
a neighborhood $S^{z_0}$ of $z_0$ in $S$ is described as a rotated graph: $\{\rho =u(x)\, ,\,  x\in U^{x_0}\}$. For any such rotated graph we have:
\begin{equation}\label{H-u}
H=\frac{n-1}{u\,\sqrt{1+|\nabla u|^2}}\;
-\;\mbox{\rm div}\,\frac{\nabla u}{\sqrt{1+|\nabla u|^2}}
\; , \end{equation}
where gradient and  divergence are taken in $X$. In particular, the  hypersurfaces
$S_t=\{\rho =t+u(x)\, ,\, x\in U^{x_0}\}$ have the following mean curvatures:

\begin{eqnarray*}
H_t
&=&\frac{n-1}{(t+u)\,\sqrt{1+|\nabla u|^2}}\;
-\;\mbox{\rm div}\,\frac{\nabla u}{\sqrt{1+|\nabla u|^2}} \;=\\
&=& H_0-\frac{n-1}{u^2\,\sqrt{1+|\nabla u|^2}}\; t\; +\; O(t^2)\;
=\; H_0-\frac{n-1}{\rho ^2}\,\rho_\nu\, t\; +\; O(t^2)
\; .\end{eqnarray*}
But $S_t$  is the image of $S^{z_0}$ under the flow of
$\nabla\rho=(\nabla\rho )^\top +\rho_\nu\,\nu$; thus Lemma~\ref{lemma-H.}
gives:
\[ -\frac{n-1}{\rho ^2}\,\rho_\nu =(\nabla\rho )^\top\, H_0
-\big(\,\mbox{\rm Ric}\, (\nu ,\nu )+|II|^2\,\big)\,\rho_\nu
-\Delta^S(\rho_\nu )\; .\] In addition \ $H_0$ \ is constant,
hence $(\nabla\rho )^\top\, H_0\equiv 0$. Thus
(\ref{Schrodinger}) holds where $\rho_\nu\neq 0$.
\end{proof}

Now (\ref{Schrodinger}) is a Schr\"odinger equation.  By the results in \cite{CG}, a non-negative solution to such equation
that vanishes at a point must vanish everywhere. Notice that the object $S$ is, by our hypotheses, the boundary of a
region $\Om$ that is a union of coaxial balls $\{ x\}\times B\big( 0,u(x)\big)$. Since $\nu$
points outward with respect to $\Om$,
it is $\rho_\nu\geq 0$ everywhere on~$S_0$. We
conclude that on each connected
component of $S_0\setminus S_\infty$ we have either $\rho_\nu\equiv 0$ or $\rho_\nu >0$.

If $S$ is a soap bubble or an isoperimetric boundary in $X\times\R$, then we cannot have $\rho_\nu\equiv 0$ on a connected component of $S_0$, because then a whole orbit of $\nabla\rho$ would be part of $S$ and this contradicts boundedness. In fact, some orbits may be interrupted by the singular points of $S$ but not all orbits, due to the large codimension of the singular set.

If $S$ is an isoperimetric boundary in $X\times\overline{B}(0,r)$ and $S_0\setminus S_\infty$ has a connected component
with $\rho_\nu\equiv 0$, then such component must reach the obstacle $X\times S(0,r)$ and not be tangent to it, which
contradicts the standard regularity for obstacle problems~\cite{Gonzalez}.

We conclude that $\rho_\nu >0$ on all of $S_0\setminus S_\infty$. If $S$ is a soap bubble, this already implies that $u$
is smooth on all of~$A$.

If $S$ is isoperimetric in
$X\times\overline{B}(0,r)$, we let
$A'=\{ x\in A\, |\, u(x)<r\}$; in this case
$S\cap\{\rho=r\}$ is compact
and $A'$ is open because it equals
$A\setminus\pi (S\cap\{\rho=r\})$. If $S$ is isoperimetric
on $X\times\R$, we let $A' =A$.
There is a subset $K\subset A'$, of codimension at least~7 in $A'$, such that $u$ is smooth in $A'\setminus K$. In addition $K$ is locally closed, because it is the intersection of the open set $A'$ with the (compact) image under $\pi$ of the singular set.

We can switch from a rotated graph description to a Cartesian graph description. If $(y_1,\dots ,y_n)$ are orthonormal coordinates in $\R$, then $S_0\setminus (S_\infty\cup\{ y_n=0\})$ is the union of two graphs $\{ y_n=\pm f(x,y_1,\dots y_{n-1})\}$, where $f$ is the following function:
\begin{equation}\label{f-graph}
f(x,y_1,\dots y_{n-1})\; =\; \sqrt{u(x)^2-y_1^2-\cdots -y_{n-1}^2}
\; , \end{equation}
defined on the open subset $U=\{ y_1^2+\cdots +y_{n-1}^2<u(x)^2\}\subset A'\times{\mathbb R}^{n-1}$. It is clear
that $A'\times\{ 0\}\subset U$. What we have proved so far implies that $f$ is smooth except perhaps on a locally closed set whose codimension in $X\times{\mathbb R}^{n-1}$ is at least~7.

De Giorgi and Stampacchia have a theorem \cite{dGS} which says
that if $f$ is a $C^2$ function defined on $U\setminus K$, with
$U\subset{\mathbb R}^d$ open and  $K\subset U$  compact of
codimension greater than~1, and if the graph of $f$ is minimal
in ${\mathbb R}^{d+1}$, then $f$ extends to a $C^2$ function on
all of $U$. L. Simon \cite{Simon} has improved this theorem, so that we only need $K$ to be
locally closed in $U$ and the graph of $f$ over $U\setminus K$ may satisfy a
PDE of some general type which includes the case of constant mean
curvature in $U\times{\mathbb R}$, with the factor $U$
having any smooth Riemann metric.

We can apply L. Simon's theorem to the function $f|_{U\setminus K}$ defined by (\ref{f-graph}). It thus extends
to a function $\widetilde{f}$ which is smooth in all of $U$. We know from Section~\ref{sec-symmetry} that the function $u$ is
lower semicontinuous; this alone does not force it to coincide with $\widetilde{f}(x,0)$, but it they did not coincide then
the region $\Om$ would not meet its boundary in the nice way that isoperimetric regions do, see e.g.~\cite{Maggi}. Finally we have proved that $u$ is smooth on all of~$A'$. In the case $A'\neq A$, at least we have $u$ of class ${\mathcal C}^1$ on
all of $A$ because $\rho_\nu =1$ everywhere on $S\cap\{\rho =r\}$.

We consider now what happens at the points $z_0=(x_0,0)\in S\cap (X\times\{ 0\})$, where the object $S$ meets its symmetry axis. Suppose there is a sequence $\{ z_j\}\subset S_0$ that converges to $z_0$ and satisfies $\lim\rho (z_j)=\delta >0$. Then the
Euclidean ball $\{ x_0\}\times B(0,\delta )$ is entirely contained in $S$ and we have $\rho_\nu =0$ at points on $S$ near the axis but not on the axis. Such points belong to $S_0\setminus S_\infty$ and we have a contradiction. Therefore, along any sequence
converging to $z_0$ we have $\liminf \rho (z_j)=0$. Since $u$ is lower semicontinuous and non-negative, we conclude that $u$
extends continuously from $A$ to the closure $\overline{A}$ and its value on the frontier $\overline{A}\setminus A$ is identically zero.

The proof of Theorem~\ref{sym-smooth} is now complete.


\section{Estimates for isoperimetric boundaries}\label{sec-estimates-1}

In this section we prove the isoperimetric boundary part of  Theorem \ref{teo-estimates}.

Recall from Section \ref{sec-regularity} that
$\rho :X\times\R\to{\mathbb R}$ is given by
 $\rho (x,y)=|y|$. Again $S=\partial\Om$ will be a soap bubble or an
 isoperimetric boundary, in all of $X\times\R$ or in
$X\times\overline{B}(0,r)$. We refer to the last possibility as
\textit{the obstacle case,} because is such case
$X\times S(0,r)$ acts as an obstacle that the isoperimetric
region $\Om$ may hit. In the three cases we assume $X\times\{ 0\}$
to be the symmetry axis of $\Om$ and $S$.

We now know that $S\cap\{\rho >0\}$ is the rotated graph of a differentiable
function $u:A\to{\mathbb R}^+$. In the obstacle case $u$ is
${\mathcal C}^1$ in all of $A$ and smooth in $\{ u<r\}$. In the
other two cases $u$ is smooth on all of~$A$.

Let $k=\dim X$ and $d=k+n=\dim (X\times\R )$. We shall write ``area'' for $(d-1)$-dimensional Hausdorff measure.

A preliminary estimate (\ref{H-ineq}) comes from computing in two different
ways the first variation of area of pieces of $S$  under the flow of
$\xi =y_1\,\partial_{y_1}+\cdots
+y_n\,\partial_{y_n}$, a vector  field parallel to the
$\R$ factor and vanishing along the axis $X\times\{ 0\}$.

Given a tiny element of hypersurface with normal unit vector~${\bf
v}\,$, \ the flow of a given vector field $\xi$ modifies the area
of such piece at the rate  $\mbox{\rm div}\,\xi -{\bf
v}\cdot\nabla_{\bf v}\xi\,$, and so the first variation of area
is $\int_{\Sigma}\big( \mbox{\rm div}\,\xi
-\nu\cdot\nabla_\nu\xi\big)$  for any hypersurface $\Sigma\,$,
compact or non--compact.

We first consider the obstacle case. It will be trival to adapt the argument to the other two cases.

The hypersurface  $S\cap\{\rho >0\}$  is transverse to almost all level sets of
 $\rho\,$.  Hence $S(\varepsilon )=S\cap\{ \varepsilon \leq\rho
\leq r-\varepsilon\}$  is a compact hypersurface with smooth boundary
for almost every  $\varepsilon\in (0,r/2)\,$.  Denoting by
$\eta_{\varepsilon}$  the outer conormal of  $\partial
S(\varepsilon )$ within $S$,  and applying formula
(\ref{def-H}) of Section~\ref{sec-monotonicity}
to  $S(\varepsilon )$,  the result is:
\[
\int_{S(\varepsilon )}\big(
\mbox{\rm div}\,\xi -\nu\cdot\nabla_\nu\xi\big)\;
=\;\int_{S(\varepsilon )} H\,\nu\cdot\xi +\int_{S\cap\{\rho
=\varepsilon\}} \xi\cdot\eta_{\varepsilon} + \int_{S\cap\{\rho
=r-\varepsilon\}}\xi\cdot\eta_{\varepsilon}
\; ,\]
where the last
term is non--negative because  $S$  is a rotated graph  $\{\rho
=u(x)\}$  and $\xi \equiv \rho\,\nabla\rho$.  We thus have:
\begin{equation}\label{var}
\int_{S(\varepsilon )}\big(
\mbox{\rm div}\,\xi -\nu\cdot\nabla_\nu\xi\big) \;\geq\;
\int_{S(\varepsilon )} H\,\nu\cdot\xi \; +\; \int_{S\cap\{\rho
=\varepsilon\}} \xi\cdot\eta_{\varepsilon}
\; .\end{equation}
The vector field  $\xi$ satisfies\ $|\xi |=\varepsilon$ along
$\{\rho =\varepsilon\}$  and the last term in (\ref{var}) is
bounded in absolute value by $\varepsilon\cdot {\mathcal H}^{d-2}\big(S\cap\{\rho
=\varepsilon\}\big)$. For each $t\in (0,r)$ define
$\delta (t) =\inf\limits_{0<\varepsilon
<t}\varepsilon\cdot{\mathcal H}^{d-2}\big(S\cap\{\rho
=\varepsilon\}\big)$. The coarea formula gives:
\[
\infty \; >\;\mbox{\rm area}\,\big(\, S\cap\{ 0<\rho
<t\}\,\big)\;\geq\;\int_0^t {\mathcal H}^{d-2}\big(S\cap\{\rho
=\varepsilon\}\big) \; d\,\varepsilon \;\geq\;\int_0^t\frac{\delta
(t)}{\varepsilon}\; d\,\varepsilon
\; ,\]
which implies  $\delta (t) =0$ for all $t$. We deduce the existence
of a sequence $\varepsilon_j\to 0$
such that $\int_{S\cap\{\rho =\varepsilon_j\}}
\xi\cdot\eta_{\varepsilon_j}\to 0$. In the limit as
$j\to\infty$, inequality (\ref{var}) thus becomes:
\begin{equation}\label{div-H}
\int_{S\cap\{ 0<\rho <r\}}\big( \mbox{\rm div}\,\xi -\nu\cdot\nabla_\nu\xi\big)
\; \geq \; \int_{S\cap\{ 0<\rho <r\}} H\,\nu\cdot\xi
\; . \end{equation}

\begin{defn}
For each $t\in [0,r]$, define \ $X_t=\{\, x\in X\; |\; u(x)\geq t\,\}$,
which is a region with smooth boundary for almost every
$t$.

The \textbf{thick part} of an isoperimetric region $\Om$ in $X\times\overline{B}(0,r)$ is $\Om_{\mbox{\rm\scriptsize thick}} =X_r\times B(0,r)$. See Figure~\ref{Othick}.
\end{defn}

\begin{figure}[ht]
\includegraphics[scale=0.3]{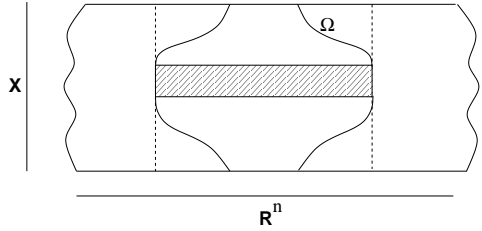}
\caption{The thick part of $\Om$}
\label{Othick}
\end{figure}

For any vector $\bf v$ tangent to $X\times\R$, the
derivative $\nabla_{\bf v}\xi$ is the orthogonal component of $\bf
v$ in the direction of the $\R$ factor.  In particular
$0\leq\nu\cdot\nabla_\nu\xi\leq 1$. It follows that:
\begin{equation}\label{div-ineq}
n-1\;\leq\; \mbox{\rm div}\,\xi -\nu\cdot\nabla_\nu\xi\;\leq\; n
\; .\end{equation}
Now (\ref{div-H}) and the second inequality in (\ref{div-ineq}) give:
\begin{equation}\label{variation}
n\cdot\mbox{\rm area}\, (S)\;\geq\;
\int_{S\cap\{ 0<\rho <r\}}\big( \mbox{\rm div}\,\xi -\nu\cdot\nabla_\nu\xi\big) \;\geq\;
\int_{S\cap\{ 0<\rho <r\}} H\, \nu\cdot\xi
\; . \end{equation}
We assume that $X_r$ is a smooth region.   If it is not, we can
reach the same results by passing to a limit as $t\nearrow r$. The outer normal $\overline{\nu}$
of $\Om \setminus\Om_{\mbox{\rm\scriptsize thick}}$ coincides with $\nu$
 along $S\cap\{ 0<\rho <r\}$ and with $-\nu_r$ along $(\partial X_r)\times B(0,r)$, where
$\nu_r$ is the obvious lift of the outer unit normal of $X_r$ in $X$. Since $\xi\cdot\nu_r\equiv 0$, we have:
\[
\int_{S\cap\{ 0<\rho <r\}} \nu\cdot\xi =
\int_{  \partial (\Om \setminus\Om_{\mbox{\rm\scriptsize thick}})  } \overline{\nu}\cdot\xi
\; .\]
Using the divergence theorem, and  writing $v$ for the volume of $\Om$,  we get:
\[
\int_{S\cap\{ 0<\rho <r\}} H\nu\cdot\xi
\; =\; H\,\int_{\partial (\Om\setminus\Om_{\mbox{\rm\scriptsize thick}})}
\overline{\nu}\cdot\xi
\; =\; n\, H\cdot \big(\, v-{\mathcal H}^d(\Om_{\mbox{\rm\scriptsize thick}})\,\big)
\; ,\]
and this together with inequality (\ref{variation}) yields
our preliminary estimate for the mean curvature:
\begin{equation}\label{H-ineq}
 \big(\, v-{\mathcal H}^d(\Om_{\mbox{\rm\scriptsize thick}})\,\big)\, H\leq\mbox{\rm area}\,(S)
\; .\end{equation}
This is for the obstacle case, but if $S$ is a soap bubble or is isoperimetric in all of $X\times\R$ then $\Om_{\mbox{\rm\scriptsize thick}}$ is empty and (\ref{H-ineq}) simplifies to:
\begin{equation}\label{H-ineq-0}
v H\leq\mbox{\rm area}\,(S)
\; . \end{equation}
We now prove the estimate (\ref{isop-H}) in Theorem~\ref{teo-estimates}. When $\Om$ is
an isoperimetric region in all of $X\times\R$, it has less boundary area than
the cylinder $X\times (\mbox{\rm ball})$ of the same volume, that is:
\begin{equation}\label{comp-cyl}
\mbox{\rm area}(S )\leq n\, (\omega_n|X|)^{1/n}\cdot v^{\frac{n-1}{n}}
\; , \end{equation}
and this, combined with inequality (\ref{H-ineq-0}), gives (\ref{isop-H}).

Inequality (\ref{b-isop-H}) in  Theorem~\ref{teo-estimates}
follows in exactly the same way if
$ v-{\mathcal H}^d(\Om_{\mbox{\rm\scriptsize thick}})$
is greater than $v/2$ for $r$ sufficiently large. The next lemma
thus finishes the proof of~(\ref{b-isop-H}).

\begin{lem}\label{halfvolume}
For sufficiently large $r$, depending on $v$, the part
$\Om_{\mbox{\rm\scriptsize thick}}$ contains less than
half the volume of $\Om$.
\end{lem}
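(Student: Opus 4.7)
The approach is by contradiction: assume $|\Om_{\mbox{\rm\scriptsize thick}}|\geq v/2$ and derive a bound on $r$ in terms of $v$ and $X$, so that for $r$ beyond this bound the lemma must hold. The two ingredients are a uniform upper bound on the free perimeter $\mbox{\rm area}(S\setminus S_\infty)$ from cylinder comparison, and a lower bound obtained by applying the coarea formula to $\rho|_S$ together with the isoperimetric inequality on the compact factor~$X$.

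For the upper bound, once $r\geq r_0:=(v/(\omega_n|X|))^{1/n}$, the cylinder $X\times B(0,r_0)$ is a volume-$v$ competitor in $X\times\overline{B}(0,r)$ whose boundary lies off the obstacle, so
\[
\mbox{\rm area}(S\setminus S_\infty)\;\leq\; n\,(\omega_n|X|)^{1/n}\, v^{(n-1)/n}.
\]
For the lower bound, I apply coarea to $\rho|_S$. Off $S_\infty$ one has $|\nabla^S\rho|=\sqrt{1-\rho_\nu^2}\leq 1$, while $|\nabla^S\rho|\equiv 0$ on $S_\infty$, and $S\cap\{\rho=t\}=\{u=t\}\times S(0,t)$ for $t\in(0,r)$, yielding
\[
\mbox{\rm area}(S\setminus S_\infty)\;\geq\; \int_S|\nabla^S\rho|\,d\mathcal{H}^{d-1}\;=\;\int_0^r L(t)\, n\omega_n t^{n-1}\, dt,
\]
where $L(t):=\mathcal{H}^{k-1}(\{u=t\})$ and $k=\dim X$. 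For generic $t\in(0,r)$, $\{u=t\}$ is the boundary in $X$ of the sublevel superset $X_t=\{u\geq t\}$, hence $L(t)\geq I_X(F(t))$ with $F(t):=|X_t|$ and $I_X$ the isoperimetric profile of $X$; on $[0,|X|/2]$ one has $I_X(m)\geq c\,m^{(k-1)/k}$ for some $c=c(X)>0$.

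The hypothesis $|\Om_{\mbox{\rm\scriptsize thick}}|\geq v/2$ reads $F(r)\geq v/(2\omega_nr^n)$, and monotonicity of $F$ gives $F(t)\geq F(r)$ on $[0,r]$. If $F(0)>|X|/2$, let $t_\ast$ satisfy $F(t_\ast)=|X|/2$; then
\[
v\;=\; n\omega_n\int_0^r t^{n-1}F(t)\,dt\;\geq\;\omega_n\,\frac{|X|}{2}\,t_\ast^n,
\]
so $t_\ast\leq(2v/(\omega_n|X|))^{1/n}$, a bound independent of $r$. Hence once $r\geq 2t_\ast$ the small-mass profile estimate applies on $[t_\ast,r]$ and
\[
\mbox{\rm area}(S\setminus S_\infty)\;\geq\; c\,F(r)^{(k-1)/k}\,\omega_n\,(r^n-t_\ast^n)\;\geq\; c'\,v^{(k-1)/k}\,r^{n/k}.
\]
Combined with the cylinder upper bound this forces $r^{n/k}\leq C(X)\,v^{(n-1)/n-(k-1)/k}$, so $r$ is bounded by a function of $v$ and $X$; for $r$ beyond this bound the assumption yields the desired contradiction.

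The main difficulty is handling the case where the sublevel supersets $X_t$ are initially large (close to $|X|$), where the Euclidean-type bound $I_X(m)\geq c\,m^{(k-1)/k}$ is unavailable; the threshold $t_\ast$, controlled via the volume identity for $v$ independently of $r$, disposes of this. The rest is a direct comparison of the two area estimates.
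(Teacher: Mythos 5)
Your proof is correct and follows essentially the same route as the paper's: argue by contradiction, bound the free perimeter from above by the cylinder competitor, bound it from below via the coarea formula for $\rho|_S$ combined with the isoperimetric inequality on $X$, and compare powers of $r$. The only cosmetic difference is how the ``$X_t$ too large for the small-volume isoperimetric inequality'' issue is handled: you introduce the threshold $t_\ast$ with $F(t_\ast)=|X|/2$ and bound it by the volume identity, whereas the paper restricts the coarea integral to $[r/2,r]$ and imposes $r^n\geq \frac{2^{n+1}}{\omega_n}\frac{v}{|X|}$ -- these are equivalent devices.
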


\begin{proof}
We assume that  $v/2\leq {\mathcal H}^d(\Om_{\mbox{\rm\scriptsize thick}})$ and derive an upper
bound for $r$.

\vspace{3mm}

The hypothesis $v/2\leq {\mathcal H}^d(\Om_{\mbox{\rm\scriptsize thick}})$  is equivalent to
$\frac{1}{2\,\omega_n}\, v\, r^{-n}\;\leq\; {\mathcal H}^k (X_r)\,$.

Notice that ${\mathcal H}^k(X_t)$ is a decreasing function of~$t$. In
particular, for any $s\in \left[ 0,\frac{r}{2}\right]$  and any
$t \in \left[\frac{r}{2}\, ,\, r\right]$ we have
${\mathcal H}^k(X_s) \geq {\mathcal H}^k (X_{t})\,$,  whence:
\[
v\; =\; {\mathcal H}^d\, (\Om )\; =\; \int_0^r n\,\omega_n\, s^{n-1}\,{\mathcal H}^k (X_s)\,
ds\; \geq\; \omega_n\,{\mathcal H}^k(X_{t})\,\int_0^{r/2}n\,
s^{n-1}\, ds
\; . \]
We then have, for all $t\in \left[\frac{r}{2}\, ,\, r\right]$, the following inequalities:
\begin{equation}\label{2-sided}
{\textstyle\frac{1}{2\omega_n}}\; v\, r^{-n} \;\leq\; {\mathcal H}^k (X_r)
\;\leq\;{\mathcal H}^k(X_{t}) \;\leq\;
{\textstyle\frac{2^n}{\omega_n}}\; v\, r^{-n}
\; . \end{equation}
Make now the extra hypothesis
$r^n\geq\frac{2^{n+1}}{\omega_n}\;\frac{v}{{\mathcal H}^k(X)}$,  which by
the last inequality in (\ref{2-sided}) ensures:
\begin{equation}\label{half-X}
{\mathcal H}^k(X_t)\leq \displaystyle\frac{1}{2}\,{\mathcal H}^k
(X)\quad \mbox{\rm for }\; t\in {\textstyle\left[\frac{r}{2}\, ,\,
r\right]}
\; .\end{equation}
Let $c_X$ be the \textbf{isoperimetric constant} of $X$, so that for every region $Y\subset X$ we have:
\[
{\mathcal H}^{k-1}(\partial Y)\geq
c_X\cdot\max\big(\,{\mathcal H}^k(Y)\, ,\,{\mathcal H}^k(X\setminus Y) \,\big)^{\frac{k-1}{k}}
\; . \]
Applying this inequality, (\ref{half-X}), and (\ref{2-sided}) to those $X_t$ which are smooth, we
get for almost every $t\in \left[\frac{r}{2}\, ,\, r\right]$:
\[
{\mathcal H}^{k-1}(\partial X_t)\;\geq\; c_X\cdot
\big({\mathcal H}^k\, (X_t)\big)^{\frac{k-1}{k}}\;\geq\;
\mbox{\rm const}\cdot\big( v\cdot r^{-n}\big)^{\frac{k-1}{k}}
\; .\]
The coarea formula gives in turn the following estimate:
\[
\mbox{\rm area}\, (S)\; >\int_{r/2}^r n\,\omega_n\,
t^{n-1}\,{\mathcal H}^{k-1}\, (\partial X_t)\,  dt
\;\geq\;\mbox{\rm const}\cdot r^n\cdot (v\, r^{-n})^{\frac{k-1}{k}}
\; ,\]
which implies:
\[
r^n\;\leq\;\mbox{\rm const}\cdot v^{1-k}\cdot\big(\,\mbox{\rm area}
(S)\,\big)^{k}\;\leq\;\mbox{\rm const}\cdot v^{1-(k/n)}
\; ,\]
the last inequality coming from (\ref{comp-cyl}).

\vspace{3mm}

From the hypothesis
$v/2\leq {\mathcal H}^d(\Om_{\mbox{\rm\scriptsize thick}})$
we have deduced that either
$r^n<\frac{2^{n+1}}{\omega_n}\;\frac{v}{{\mathcal H}^k(X)}$ \ or \ $r^n\leq\mbox{\rm const}\cdot v^{1-(k/n)}\,$. These are two upper bounds for $r$ which depend only on~$v$.
Hence, for $r$ larger than these bounds it must be \ ${\mathcal H}^d(\Om_{\mbox{\rm\scriptsize thick}}) <v/2$.

This proves Lemma~\ref{halfvolume} and, as we have explained, estimate (\ref{b-isop-H}) in Theorem~\ref{teo-estimates}.
\end{proof}

We shall now prove estimate (\ref{H-osc-bound}) for an isoperimetric
boundary $S$  in $X\times\overline{B}(0,r)$. Again we assume $X\times\{ 0\}$ is the symmetry axis of~$S$.

We know that $S$ has constant mean curvature in $\{ 0<\rho <r\}$. Assuming the volume $v$ enclosed by $S$ to be
larger than some arbitrary value $v_0$, and $r$ large enough for $v$, the estimate (\ref{b-isop-H}) provides an upper bound $H_0$ for said constant mean curvature. Taking this mean curvature bound to the monotonicity inequality (\ref{area-estimate}), we obtain constants $\varepsilon$ and $\delta$ such that:
\[
\mbox{\rm if }\; z_0\in S\cap\{\varepsilon <\rho <r-\varepsilon\}\;\;\;\;\mbox{\rm then }\;\;\;
\mbox{\rm area}\big(\, S\cap  B^{X\times\R}(z_0,\varepsilon )\,\big)\geq\delta
\; .\]
Define:
\[ \rho_0=\min (\rho|_S)\quad ,\quad \rho_1=\max (\rho|_S)\; .\]
The projection $X\times\R\to\R$ maps $S$ onto a Euclidean ring with radii $\rho_0$ and $\rho_1$. Euclidean space $\R$ has
a \textbf{packing constant} $C(n)$ such that a ring with those radii can pack $\ell$ disjoint Euclidean balls of radius $\varepsilon$,
where $\ell\geq C(n)\cdot (\rho_1^n-\rho_0^n)/\varepsilon^n$. These balls lift to disjoint distance balls in $X\times\R$, centered at points of $S\cap\{\varepsilon <\rho <r-\varepsilon\}$. It follows that:
\begin{equation}\label{area-rho}
\mbox{\rm area} (S)\geq \mbox{\rm const}\cdot (\rho_1^n-\rho_0^n)
\; , \end{equation}
the constant depending only on $n$ and $X$. Comparing $S$ with a cylinder $X\times (\mbox{\rm sphere})$ that encloses the same volume, we get $\,\mbox{\rm area}(S)\leq\mbox{\rm const}\cdot\rho_1^{n-1}$, therefore:
\begin{equation}\label{powers}
\rho_1^n-\rho_0^n\leq\mbox{\rm const}\cdot\rho_1^{n-1}
\; ,\end{equation}
with the constant depending only on $n$ and $X$. Observing that:
\[
\rho_1^n-\rho_0^n\; =\; (\rho_1-\rho_0)\cdot (\rho_1^{n-1}+\rho_1^{n-2}\rho_0+\cdots +\rho_0^{n-1})
\;\geq\; (\rho_1-\rho_0)\cdot\rho_1^{n-1}
\; ,\]
we deduce from (\ref{powers}) the inequality $(\rho_1-\rho_0)\cdot\rho_1^{n-1}\leq\mbox{\rm const}\cdot\rho_1^{n-1}$,
which is equivalent to the first inequality in~(\ref{H-osc-bound}). This estimate makes $v$ comparable to $\rho_1^n$, thus the
second inequality  in~(\ref{H-osc-bound}) follows from~(\ref{b-isop-H}).

\begin{rem}
The constant in (\ref{powers}) really depends on $n$, $X$, and the chosen value $v_0$. This means that we have an
estimate valid also for small enclosed volumes:
\begin{equation}\label{osc-bound}
\rho_1-\rho_0\;\leq\;\mbox{\rm const}_v
\, , \end{equation}
where the constant depends on $v$ but not on the radius $r$ of the domain $X\times\overline{B}(0,r)$ where $S$ is isoperimetric.
\end{rem}

We also have proved now estimate (\ref{H-osc-bound})  for
regions isoperimetric in $X\times\R$ of volume larger than $v_0$,
because they are isoperimetric in any compact domain.


\section{Estimates for soap bubbles}\label{sec-estimates-2}

In this section we prove the soap bubble part of Theorem \ref{teo-estimates}.

We start with the upper bounds for mean curvature: one under
the hypothesis $\,\mbox{\rm Ric}\geq 0$, 
the other under no special hypothesis. These bounds for $H$
will in turn allow us to get the radius bounds. The idea 
for the mean curvature estimates is that large $H$ would force
$S$ to ``roll up'' and bound a region of small volume. The first
result along these lines was obtained by J. Serrin
\cite[pages~85-87]{Serrin} for surfaces in ${\mathbb R}^3$.
Serrin uses a formula of G.~Darboux for parallel surfaces,  and
deals with the possible singularity of a parallel surface at a focal
point. W. Meeks has a similar result in
\cite[page~544]{Meeks} for hypersurfaces in $\R$.
His calculation is equivalent to that of Serrin, but he works in
the hypersurface instead of its parallel image and 
the singularities do not show up. He considers a
height function $x_n$  and the corresponding component \
$\nu_n$ of the unit normal, then observes that (with our
convention for $H$) the function 
$x_n-\frac{n-1}{H}\,\nu_n$  is subharmonic in the
hypersurface.  If $x_n|_{\partial S}\equiv 0$,  then
$x_n\leq \frac{n-1}{H}$  by the maximum principle.  Intuitively,
if the hypersurface is strongly curved then it cannot reach far
out in a given direction.

We are going to imitate that argument here. We shall multiply a
component of $\nu$ by a constant parameter $p $, then
we choose suitable values for this parameter.

For any function $\varphi$ on a domain of $X\times\R$, define the tangential Laplacian as follows:
\begin{equation}\label{def-tangLap}
\Delta^\top \varphi =\sum_{j=1}^{d-1}\mbox{\rm Hess}(\varphi )({\bf e}_j,{\bf e}_j)
\; ,\end{equation}
where ${\bf e}_1,\dots ,{\bf e}_{d-1}$ is an orthonormal basis of $TS$. With our convention
for $H$, the following holds:
\[
\Delta^S\rho =\Delta^\top\rho -H\,\rho_\nu\geq - H\,\rho_\nu
\; ,\]
because the Hessian of $\rho$ is positive semidefinite. This and
formula (\ref{Schrodinger}) of Section~\ref{sec-regularity} imply
that for any constant $p $ we have:
\begin{equation}\label{Ric-ineq}
\Delta^S\, (\rho -p \,\rho_\nu )\;\geq\;
\left(\, p \, |II|^2+p \,\mbox{\rm Ric}\,
(\nu ,\nu )-p \,\frac{n-1}{\rho^2}-H\,\right)\,\rho_\nu
\; .\end{equation}
Restrict to $p  >0$, and recall that $\rho_\nu >0$. Introduce now the
 hypothesis $\,\mbox{\rm Ric}\geq 0$, then (\ref{Ric-ineq}) simplifies to:
\[
\Delta^S\, (\rho -p \,\rho_\nu )\;\geq\;
 \left(\, p \, |II|^2-H-p \,\frac{n-1}{\rho^2}\,\right)\,\rho_\nu
\; ,\]
and Newton's inequality  $|II|^2\geq H^2/(d-1)$   leads to:
\[
\Delta^S\, (\rho -p \,\rho_\nu )\;\geq\;
\left(\,p \,\frac{H^2}{d-1}-H-p \frac{n-1}{\rho^2}\,\right)\,\rho_\nu
\; .\]

Again let $\rho_1,\rho_0$ be the maximum and minimum,
respectively, of $\rho$ over $S$. The useful choice here is $p  =\rho_1/4$, then:
\[
\Delta^S\, \left(\rho -\frac{\rho_1}{4}\,\rho_\nu \right)\;\geq\;
\left(\,\frac{H^2}{4\, (d-1)}\,\rho_1-H-\frac{n-1}{4\,\rho^2}\,\rho_1\,\right)\,\rho_\nu
\; .\]
Consider the hypersurface piece $\Sigma =S\cap\{\rho\geq\rho_1/2\}$. The boundary $\partial\Sigma$
may be empty, unless $\rho_0<\rho_1/2$. At all points of $\partial\Sigma$ (if any) we have
$\rho -(\rho_1/4)\,\rho_\nu \leq \rho_1/2$  while
there are points on the interior of $\Sigma$ where
$\rho -(\rho_1/4)\,\rho_\nu >\rho_1/2$. For example,
a point $z\in S$ where $\rho(z)=\rho_1$ is interior to $\Sigma$ and gives
$\big(\rho -(\rho_1/4)\,\rho_\nu\big)_z=(3/4)\,\rho_1$. Therefore the maximum of
$\rho -(\rho_1/4)\,\rho_\nu $ over $\Sigma$ is achieved at an
interior point $z_0$. At $z_0$ we have $4\,\rho^2>\rho_1^2$ and
$\Delta^S\big(\rho -(\rho_1/4)\,\rho_\nu\big)\leq 0$, hence:
\begin{equation}\label{1}
0\geq \left(\,\frac{H^2}{4\, (d-1)}\,\rho_1-H-\frac{n-1}{4\,\rho^2}\,\rho_1\,\right)(z_0)
\geq \frac{H^2}{4\, (d-1)}\,\rho_1-H-\frac{n-1}{\rho_1}
\; .\end{equation}
Suppose $n\geq 2$. The hypersurface $\{\rho=\rho_1\}$ touches $S$ tangentially from outside and
has constant mean curvature $(n-1)/\rho_1$, thus $H\geq (n-1)/\rho_1>0$. Taking this
lower bound for $H$ to (\ref{1}), we obtain:
\[
0\geq \frac{H^2}{4\, (d-1)}\,\rho_1-2\, H
\; ,\]
and since $H>0$ we deduce $H\leq 8\, (d-1)/\rho_1$. If the ambient space is
$X\times{\mathbb R}$, then $n-1=0$ and (\ref{1}) reduces to:
\[
0\geq\frac{H^2}{4\, (d-1)}\,\rho_1 -H
\; ,\]
then $H$ is either 0 or a positive number not greater
than $4\, (d-1)/\rho_1$, in either case $H\leq 4\, (d-1)/\rho_1 = (4/\rho_1)\cdot\dim (X)$.

\vspace{3mm}

We now prove the mean curvature bound when $S$
is a (non-isoperimetric) soap bubble enclosing volume $v$ and the Ricci curvature of $X$ is negative
somewhere. Again we shall have to separate the case $n\geq 2$ from the case $n=1$. 
We introduce the constant:
\begin{equation}\label{def-R0}
R_0=\max_{|{\bf v}|=1}\,
\big(\, -\mbox{\rm Ric}\, ({\bf v},{\bf v})\,\big)^+
\; .\end{equation}
The number defined by (\ref{def-R0}) is the same whether
we consider Ric as the Ricci tensor of $X$ and $\bf v$ ranging
over unit tangent vectors to $X$,
or we consider Ric as the Ricci tensor of
$X\times\R$ and $\bf v$ ranging over unit
tangent vectors to $X\times\R$. Notice that $\,\mbox{\rm Ric}\geq 0$
is equivalent to $R_0=0$.

From (\ref{Ric-ineq}) and Newton's inequality, we now deduce for $p  >0$:
\[
\Delta^S(\rho -p \,\rho_\nu )\geq\left(
p \,\frac{H^2}{d-1}-H-R_0\, p  -p \,\frac{n-1}{\rho^2}
\right)\,\rho_\nu
\; .\]
As pointed out above, if $n\geq 2$ then $H\geq (n-1)/\rho_1>0$. Let us see
that the choice $p  =d/H$ is useful is this situation, leaving
the case $n=1$ for later. First we obtain:
\[
\Delta^S\left(\rho -\frac{d}{H}\,\rho_\nu\right) \;\geq\;
\left(\frac{H}{d-1}-\frac{d\, R_0}{H}-\frac{(n-1)\, d}{H\,\rho^2}\right)\,\rho_\nu
\; .\]
Assume $\rho_1\geq 2$. In particular, the hypersurface
$\Sigma =S\cap\{\rho\geq1\}$ is non-empty. On $\Sigma$ one has:
\begin{equation}\label{eq-H}
\Delta^S\left(\rho -\frac{d}{H}\,\rho_\nu\right) \;\geq\;
\left(\frac{H}{d-1}-\frac{d\, R_0}{H}-\frac{(n-1)\, d}{H}\right)\,\rho_\nu
\; .\end{equation}
The factor multiplying $\rho_\nu$ in the right-hand side is a strictly increasing function of $H>0$,
and it equals 0 for a unique positive value $H_0$ of $H$. Let us see that for large
enclosed volume we have $H\leq H_0$.

Suppose $H>H_0$,  then $\rho -(d/H)\rho_\nu$
is strictly subharmonic on $\Sigma$. This is impossible if $\partial\Sigma$ is empty. If
$\partial\Sigma =S\cap\{\rho =1\}$ is non-empty, then the maximum of
$\rho -(d/H)\rho_\nu$ on $\Sigma$ is reached somewhere on $\partial\Sigma$. It follows that 
$\rho\leq 1+(d/H)<1+(d/H_0)$ on all of $S$, which cannot be true if $S$
encloses a large enough volume. Thus $n\geq 2$ plus  large enclosed volume forces $H\leq H_0$.

Assume now $n=1$. In this case, for each constant $t$ the
hypersurface $\{\rho = t\}$ is minimal. If $\rho_0>0$,
then $S$ is sandwiched between the two minimal hypersurfaces $\{\rho =\rho_0\}$ and $\{ \rho=\rho_1\}$
that touch $S$ tangentially; this implies $0\leq H\leq 0$, thereby forcing $S$ to be minimal and, by the maximum principle, $\rho_1=\rho_0$. Hence $S$ must be of the form $X\times\{ -t,t\}$.

The remaining case is $n=1$ and $\rho_0=0$. We know that $S$ is a
symmetric graph $\{\rho =\pm u(x)\}$, with $x$
ranging over a proper subset $X_0\subset X$. This
is, for instance, the situation for
the surfaces $S_v$ of Theorem~\ref{teo-estables}. The minimal hypersurface
$\{\rho =0\}$ is not tangent to $S$ now, but $\{\rho =\rho_1\}$ still  is.
Hence $H\geq 0$, and in fact it must be $H>0$ by the maximum principle. Then
formula (\ref{eq-H}) is valid again, adopting the following form:
\[
\Delta^S\left(\rho -\frac{d}{H}\,\rho_\nu\right) \;\geq\;
\left(\frac{H}{d-1}-\frac{d\, R_0}{H}\right)\,\rho_\nu
\; .\]
We define $H_0$ by $\frac{H_0}{d-1}-\frac{d\,R_0}{H_0}=0$ and we deduce,
as before, that $H\leq H_0$ for large enclosed volume.

\vspace{3mm}

Having proved the mean curvature bounds in (\ref{H-osc-bound}) and (\ref{H-max-bound}), 
we shall now prove the radius bounds. Since $S$ is supposed to be not too small, we have
$H$ less than some constant; then monotonicity plus a sphere packing argument, as we did in
Section~\ref{sec-estimates-1}, yields
again a lower area bound like (\ref{area-rho}) of Section~\ref{sec-estimates-1}. We need some
upper bound for area in order to arrive at a radius estimate.

Once more we assume $n\geq 2$ and leave
the $n=1$ case for later. Since $S$ is closed and everywhere smooth, we have:
\begin{equation}\label{H-div-0}
\int_S(\mbox{\rm div}\,\xi -\nu\cdot\nabla_\nu\xi )=\int_SH\,\nu\cdot\xi
\; .\end{equation}
In Section~\ref{sec-estimates-1} we used the second inequality
in formula (\ref{div-ineq}); now we use the first inequality in that formula,
together with equality (\ref{H-div-0}) and the divergence 
theorem, to deduce  $\, (n-1)\,\mbox{\rm area}(S)\leq n\, H\, v$, \ 
and we can divide by $n-1\geq 1$, to get:
\begin{equation}\label{area-upper-bound} 
\mbox{\rm area}(S)\;\leq\;\frac{n}{n-1}\, H\, v
\; . \end{equation}
If $\,\mbox{\rm Ric}\geq 0$, then we have $H\leq\mbox{\rm const}/\rho_1$ which transforms
(\ref{area-upper-bound}) into  an inequality:
\[ \mbox{\rm area}(S)\;\leq\;\mbox{\rm const}\cdot\rho_1^{n-1}\; , \]
even though we do not assume $S$ to be isoperimetric. We then obtain the radius oscillation estimate in 
(\ref{H-osc-bound}) exactly as we did in Section~\ref{sec-estimates-1}.

If $\,\mbox{\rm Ric}$ is negative somewhere, then we only have $H\leq\mbox{\rm const}$ and
all we can get is the estimate:
\[ 
\rho_1^n-\rho_0^n\;\leq\;\mbox{\rm const}\cdot\mbox{\rm area}(S)\leq\mbox{\rm const}\cdot v
\; , \]
and we consider the following dichotomy:
\begin{list}{}{}
\item[--] if \ $\rho_1/\rho_0>2\,$, \ then \ $\big(\,
1-(1/2)^n\,\big)\, \rho_1^n \;\leq\;\mbox{\rm const}\cdot v\,$,
\item[--] if \ $\rho_1/\rho_0\leq 2\,$, \ then \ $v\;\geq\;
\omega_n\, |X|\,\rho_0^n\;\geq\;
\omega_n\, |X|\, (1/2)^n\, \rho_1^n\,$.
\end{list}
In either case we deduce $\rho_1\leq\mbox{\rm const}\cdot v^{1/n}$. This
completes the proof of (\ref{H-max-bound}) for $n\geq 2$.

Suppose now that $n=1$. We still have $H$ bounded above by a constant and,
by monotonicity, an inequality:
\begin{equation}\label{area-lineal}
\mbox{\rm area}(S)\geq\mbox{\rm const}\cdot (\rho_1-\rho_0)
\; .\end{equation}
We again need an upper bound for area. 

Formula (\ref{H-u}) of Section~\ref{sec-regularity} now reduces to
$ H=-\mbox{\rm div}\,\big(\,\nabla u/\sqrt{1+|\nabla u|^2}\,\big)$,
involving only the divergence term. We take advantage of this
by doing an integration by parts. Given a value $s>0$, The function $u-s$ vanishes
along the boundary of $X_s=\{\, u\geq s\}$ and so:
\begin{eqnarray*}
&& \frac{v}{2}\, H+|X_0| \;=\; \int_{X_0} (u \, H+1)\,\geq\;\int_{X_s}\big( (u-s)\, H+1\big)\; =
\\ && = \int_{X_s}\left(\,\frac{|\nabla u|^2}
{\sqrt{1+|\nabla u|^2}}
+1\,\right)\,\geq \;\int_{X_s}\sqrt{1+|\nabla u|^2}
\; =
\\ && = \frac{1}{2}\,\mbox{\rm area}\, (S\cap\{ \rho\geq s\} )
\; ,\end{eqnarray*}
for almost every $s>0$. By letting $s\to 0$ we obtain:
\begin{equation}\label{ineq-dim-1}
\mbox{\rm area}\, (S)\; \leq\; v\, H+2\, |X_0|
\; .\end{equation}
If $\,\mbox{\rm Ric}\geq 0$, then $H\leq\mbox{\rm const}/\rho_1$ and (\ref{ineq-dim-1})
becomes $\,\mbox{\rm area}(S)\leq\mbox{\rm const}$. We take this to (\ref{area-lineal})
and get the radius oscillation bound in~(\ref{H-osc-bound}).

If $\,\mbox{\rm Ric}$ is negative somewhere, then $H\leq\mbox{\rm const}$. Now
(\ref{area-lineal}) and (\ref{ineq-dim-1}) only give:
\[ 
\rho_1-\rho_0\;\leq\; \mbox{\rm const}\cdot v +2\, |X_0|
\; ,\]
which yields $\rho_1-\rho_0\leq\mbox{\rm const}\cdot v$ for $v$ not too small. Then we
consider the dichotomy $\rho_/\rho_1 >2$ or $\rho_1/\rho_0\leq 2$, and in either case
arrive at the radius bound in~(\ref{H-max-bound}).


\section{Existence}\label{sec-existence}

In this section we prove the following.

\begin{thm}\label{teo-existence}
In $X\times\R$ there are isoperimetric regions of every volume.
\end{thm}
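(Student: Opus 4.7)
The plan is to produce an isoperimetric region of volume $v$ in $X\times\R$ by starting from an isoperimetric region of the same volume in the obstacle ball $X\times\overline{B}(0,r)$ and showing that, once $r$ is large enough in terms of $v$, the obstacle sphere $X\times S(0,r)$ is not reached. Standard BV compactness and lower semicontinuity of perimeter on the compact manifold $X\times\overline{B}(0,r)$ yield, for every $r>0$, an isoperimetric region $\Om_r$ of volume $v$. Using Theorem~\ref{sym-smooth} and a translation parallel to the $\R$ factor, we may assume $\Om_r$ is the closure of the rotated graph of a function $u_r:A_r\to{\mathbb R}^+$ with symmetry axis $X\times\{0\}$; in particular $\Om_r\subset X\times\overline{B}(0,\max u_r)$.

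The key step, and the main obstacle, is a bound
\begin{equation*}
\max u_r\;\leq\; C_v
\end{equation*}
with $C_v$ independent of $r$. For $r$ large in terms of $v$, the mean-curvature bound (\ref{b-isop-H}) and the radius-oscillation estimate (\ref{osc-bound}) give $\max u_r-\min u_r\leq\mbox{const}_v$. Volume considerations control $\min u_r$ as well: either $A_r\subsetneq X$, in which case $\min u_r=0$, or $A_r=X$, in which case
\begin{equation*}
v\;\geq\;\omega_n\,|X|\,(\min u_r)^n.
\end{equation*}
In either case $\min u_r\leq(v/(\omega_n|X|))^{1/n}$, and adding the oscillation bound produces $C_v$.

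Fix $r_0>C_v$ large enough that the above estimates apply whenever $r\geq r_0$. Then the maximal coaxial ball $\{x\}\times\overline{B}(0,\max u_r)$ of $\Om_r$ lies in the open $B(0,r)$, so $\Om_r$ does not meet $X\times S(0,r)$. I claim that such $\Om_r$ is already isoperimetric in $X\times\R$. Given any competitor $\tilde\Om\subset X\times\R$ of volume $v$ with finite perimeter, the coarea formula applied to $|y|$ on $\tilde\Om$ yields radii $R\to\infty$ along which $\mathcal{H}^{d-1}\bigl(\tilde\Om\cap X\times S(0,R)\bigr)\to 0$; truncating to $\tilde\Om\cap(X\times\overline{B}(0,R))$ and restoring the missing volume with a tiny ball placed far from $\Om_r$ produces a competitor in $X\times\overline{B}(0,R')$ of volume exactly $v$ and perimeter at most $P(\tilde\Om)+\varepsilon$. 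For $R'\geq r_0$, both $\Om_r$ and any isoperimetric $\Om_{R'}$ can, after translation, be placed inside the common ball $X\times\overline{B}(0,C_v)$, so each is a competitor against the other; this forces $P(\Om_r)=P(\Om_{R'})\leq P(\tilde\Om)+\varepsilon$. Letting $\varepsilon\to 0$ gives $P(\Om_r)\leq P(\tilde\Om)$, so $\Om_r$ solves the isoperimetric problem on all of $X\times\R$.
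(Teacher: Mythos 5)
Your proposal is correct and follows essentially the same route as the paper: a uniform bound $\max u_r\leq C_v$ obtained from the oscillation estimate (\ref{osc-bound}) plus the volume bound on $\min u_r$, then the translation argument identifying the perimeters of the $\Om_r$ for all large $r$, and truncation plus a small far-away ball to handle arbitrary competitors. The only cosmetic difference is that you treat bounded and unbounded competitors uniformly by truncation, whereas the paper disposes of bounded competitors directly by inclusion first.
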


Fix a value $v>0$ and for each $r$ let $\Om (r)$ be any region
isoperimetric of volume $v$ in $X\times\overline{B}(0,r)$. These regions exist
because the domains $X\times\overline{B}(0,r)$ are compact. Up
to a translation parallel to the $\R$ factor, we may assume
that $X\times\{ 0\}$ is the symmetry axis of all the $\Om (r)$. Recall
inequality (\ref{osc-bound}) from Section~\ref{sec-estimates-1},
valid for $r$ large (depending on $v$) and where the constant
 depends on $v$ but not on $r$. The following calculation:
\[
v\;\geq\; |X|\cdot\omega_n\cdot \rho_0^n\;\geq\;  |X|\cdot\omega_n\cdot (\rho_1-\mbox{\rm const}_v)^n
\; ,\]
provides a bound for $\rho_1-\mbox{\rm const}_v$ that depends on $v$ but not on $r$, thereby providing one
such bound also for $\rho_1$. We have thus found a radius $r(v)$ such that for $r$ large enough  the regions
$\Om (r)$ are contained in $X\times\overline{B}\big( 0,r(v)\big)$.

Choose a radius $r_0$  larger than $r(v)$
and large enough for $v$, and let $\Om_0$ be any isoperimetric
region in $X\times\overline{B}(0,r_0)$ of volume $v$, with
symmetry axis $X\times\{ 0\}$. We claim that $\Om_0$ is isoperimetric in all of $X\times\R$.

We first compare $\Om_0$ with  bounded regions.  If $D$ is any
bounded region in $X\times\R$ of volume $v$, there is an
$r$ such that $D\subset X\times B(0,r)$ and $r$ is large enough for $v$. There is also a
region $\Omega'$ isoperimetric of volume $v$ in $X\times
\overline{B}(0,r)$,  and so  $\mbox{\rm area}\,(\partial
D)\geq\mbox{\rm area}\, (\partial\Om')$. By the above, a translate
of the region $\Om'$ is contained inside $X\times \overline{B}\big( 0,r(v)\big)$, where
$\Om_0$  is also isoperimetric of volume $v$.
Therefore $\mbox{\rm area}\, (\partial\Om_0 )=\mbox{\rm area}\,
(\partial\Om')$, and so  $\mbox{\rm area}\, (\partial
D)\geq\mbox{\rm area}\, (\partial\Om_0 )$.

Let now $D'$ be an unbounded region of volume $v$ and finite boundary area.
As $r$ goes to infinity the volume of
$D'_r:=D'\cap\big( X\times B(0,r)\big)$  approaches the volume of $D'$,
and same for boundary area. For $r$ large choose a little
ball $B$ in $X\times\R$, some distance apart from $D'_r$ and such that
$D'_r\cup B$ has exactly volume $v$. Then
\[
\mbox{\rm area} (\partial \Om_0 ) \; \leq \;
\mbox{\rm area}\big(\partial ( D'_r\cup B ) \big)\; = \;
 \mbox{\rm area} (\partial D'_r) +\mbox{\rm area}(\partial B )
 \; ,\]
and by letting $r\to\infty$ we get
$\mbox{\rm area}\, (\partial D')\geq\mbox{\rm area}\, (\partial \Om)$,
due to $\mbox{\rm area}\,(\partial{\mathcal B})\to 0$. In fact
$\mbox{\rm area}\, (\partial D')>\mbox{\rm area}\, (\partial \Om)$,
because we proved in Section~\ref{sec-monotonicity} that
no unbounded region is isoperimetric in $X\times\R$.


\section{Proof of Theorems \ref{large-isop} and \ref{large-bubble}}\label{sec-main}

Let $S\subset X\times\R$. If $S$ is an isoperimetric boundary,
or if $X$ has $\,\mbox{\rm Ric}\geq 0$ and $S$ is a soap bubble, then for large enclosed volume $v$
we have estimate (\ref{H-osc-bound}) from Theorem~\ref{teo-estimates}; this
implies in particular that $\min u >0$. Thus in these cases the function $u$, whose rotated graph
is $S$, is defined and positive on all of $X$. Indeed, if it were $u:A\to{\mathbb R}^+$ with $A\neq X$
then the frontier of $A$ would be non-empty, and we saw at the end of Section~\ref{sec-regularity}
that $u$ would vanish there.

In view of this, Theorems \ref{large-isop} and \ref{large-bubble} are corollaries of the following
proposition.

\begin{prop}\label{aux}
Fix a constant $C\,$.  Let  $u:X\rightarrow{\mathbb R}^+$
be a smooth function with the oscillation bound $\max\, u-\min\,
u \leq C$ and such that the rotated graph $S=\{\rho
=u(x)\}\subset X\times\R$  has constant mean
curvature.  If $S$ encloses a sufficiently large volume
(depending on $C$), then $u$ must be constant.
\end{prop}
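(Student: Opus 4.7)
The plan is to argue by contradiction, combining compactness with a local uniqueness statement around the constant (cylinder) solutions. Suppose the conclusion fails: then there exists a sequence of smooth, positive, non-constant $u_k\co X\to{\mathbb R}^+$ whose rotated graphs have constant mean curvature $H_k$, satisfy $\max u_k-\min u_k\leq C$, and enclose volumes $v_k\to\infty$. Because $v_k=\omega_n\int_X u_k^n\leq\omega_n|X|(u_{k,\min}+C)^n$, we get $u_{k,\min}\to\infty$. Evaluating the CMC equation (\ref{H-u}) at a max and a min of $u_k$ (where $\nabla u_k=0$) and using the sign of $\Delta u_k$ there yields the pinching
\[
\frac{n-1}{u_{k,\max}}\;\leq\; H_k\;\leq\;\frac{n-1}{u_{k,\min}},
\]
hence $H_k\to 0$.

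Next I invoke the gradient bound from the Appendix to obtain $|\nabla u_k|\leq G$ uniformly in $k$, which keeps (\ref{H-u}) uniformly elliptic. Schauder theory then gives uniform $C^{2,\alpha}$ bounds for the centered functions $\phi_k:=u_k-\overline{u}_k$, where $\overline{u}_k:=|X|^{-1}\int_X u_k$. Along a subsequence $\phi_k\to\phi_\infty$ in $C^2(X)$. Passing to the limit in (\ref{H-u}), the term $(n-1)/(u_k\sqrt{1+|\nabla u_k|^2})$ vanishes (as $u_k\to\infty$), and so does $H_k$, leaving
\[
\mbox{\rm div}\,\frac{\nabla\phi_\infty}{\sqrt{1+|\nabla\phi_\infty|^2}}\;=\;0
\]
on the closed manifold $X$. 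Multiplying by $\phi_\infty$, integrating by parts, and using $\int_X\phi_\infty=0$ forces $\nabla\phi_\infty\equiv 0$ and hence $\phi_\infty\equiv 0$; bootstrapping gives $\phi_k\to 0$ in $C^{2,\alpha}$.

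The last and main step is a quantitative local uniqueness: non-constant CMC solutions cannot be arbitrarily $C^{2,\alpha}$-close to a large constant. Linearizing (\ref{H-u}) around $\overline{u}_k$ yields the operator $L_c\phi=-\Delta\phi-\tfrac{n-1}{c^2}\phi$, and the equation for $\phi_k$ projected onto the mean-zero subspace reads $L_{\overline{u}_k}\phi_k=-PN_k(\phi_k)$, where $P$ denotes the mean-zero projection and the nonlinear remainder satisfies $\|N_k(\phi)\|_{C^{0,\alpha}}\leq\mbox{\rm const}\cdot\|\phi\|_{C^{2,\alpha}}^2$ uniformly in $k$ (the $C^1$ bound keeps $\sqrt{1+|\nabla\phi|^2}$ bounded away from degeneracy). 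For $\overline{u}_k$ large the spectrum of $L_{\overline{u}_k}$ on the mean-zero subspace is bounded below by $\lambda_1(X)-(n-1)/\overline{u}_k^{\,2}\geq \lambda_1(X)/2$, so $L_{\overline{u}_k}$ is uniformly invertible there, giving
\[
\|\phi_k\|_{C^{2,\alpha}}\;\leq\; c_0\,\|\phi_k\|_{C^{2,\alpha}}^2.
\]
Combined with $\|\phi_k\|_{C^{2,\alpha}}\to 0$ this forces $\phi_k\equiv 0$ for $k$ large, contradicting non-constancy.

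The main obstacle I anticipate is ensuring that every ingredient --- the Schauder estimate on $\phi_k$, the spectral gap of $L_{\overline{u}_k}$ on mean-zero functions, and the quadratic bound on $N_k$ --- is uniform as $\overline{u}_k\to\infty$ and $H_k\to 0$. The spectral gap is ultimately controlled by the fixed eigenvalue $\lambda_1(X)>0$ of the closed base, while the uniform ellipticity and the quadratic remainder estimate both rely crucially on the Appendix's gradient bound.
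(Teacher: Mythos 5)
Your argument is correct in outline, but it follows a genuinely different route from the paper. The paper's proof is a one-shot variational computation: it passes to the slice-volume function $\sigma=(u/n)^n=a+\tau$ with $\int_X\tau=0$, notes that the explicit volume-preserving deformation $\sigma_t=a+e^t\tau$ must have vanishing first variation of area because $S$ has constant mean curvature, and then shows (Lemma~\ref{alpha-ineq}, which uses the Appendix gradient bound) that this first variation dominates $\big(C''-\tfrac{4}{n\lambda_1(X)a^{2/n}}\big)\int_X|\nabla\tau|^2$, which is positive for large volume unless $\tau\equiv0$. You instead run a compactness-plus-linearization scheme: the maximum-principle pinching $\tfrac{n-1}{\max u_k}\le H_k\le\tfrac{n-1}{\min u_k}$ (a clean, self-contained step the paper never states), uniform ellipticity from the Appendix bound, convergence of $\phi_k=u_k-\overline{u}_k$ to a solution of the minimal-graph equation on closed $X$ (hence to $0$), and finally the quantitative uniqueness inequality $\|\phi_k\|\le c_0\|\phi_k\|^2$ coming from the uniform spectral gap $\lambda_1(X)-(n-1)/\overline{u}_k^{\,2}$ of the linearized operator on mean-zero functions. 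Both proofs ultimately rest on the same two inputs --- the Appendix gradient estimate and $\lambda_1(X)>0$ --- but the paper's argument is effective (an explicit volume threshold in terms of $n$, $X$, $C$), while yours is softer and non-effective because of the compactness step. Two details to tighten: passing from the $C^1$ bound to uniform $C^{2,\alpha}$ bounds requires the intermediate De Giorgi--Nash/Ladyzhenskaya--Ural'tseva $C^{1,\alpha}$ estimate for the quasilinear divergence-form equation before Schauder applies; and since Arzel\`a--Ascoli only yields convergence in $C^{2,\beta}$ for $\beta<\alpha$, the fixed-point inequality should be run in that slightly weaker H\"older norm. Neither point affects the validity of the argument.
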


If $n=1$ and $\min u >0$, we have already explained in Section~\ref{sec-estimates-2}
that $S$ must be of the form $X\times\{ -t,t\}$.

In the rest of this section we prove Proposition~\ref{aux} for $n\geq 2$. Instead of the 
radius function $u$ we shall work with the slice volume function:
\[ \sigma :=\left(\frac{u}{n}\right)^n\; .\]
The choice of the
factor $n^{-n}$ is not important, it just makes formulas a bit simpler.

Consider the average $\overline{u^n}=(1/|X|)\,\int_xu^n$. The number $a=n^{-n}\cdot\overline{u^n}$
is the average of $\sigma$, thus:
\[
\sigma\;\equiv\; a+\tau\;\; ,\;\;\mbox{\rm for some function }\;\tau
\;\;\mbox{\rm with }\;\int_X\tau =0
\; .\]
We fix the exponent $\alpha =\frac{n-1}{n}\in (0,1)$. In terms of $\sigma$ we have:
\begin{equation}\label{area-alpha}
\mbox{\rm area}\big(\,\{\rho =u(x)\}\,\big)\; =\; n^n\,\omega_n\,\int_X
\sqrt{\sigma^{2\alpha}+|\nabla\sigma |^2}
\, .\end{equation}
Given a family $\{ u_t\}$ of radius functions, and the corresponding family $\{\sigma_t\}$, we
define  $\dot{\sigma}=\left.\frac{d}{dt}\right|_{t=0}\sigma_t$.
Direct differentiation in (\ref{area-alpha}) gives:
\begin{equation}\label{variation-alpha}
\left.\frac{d}{dt}\right|_{t=0}\mbox{\rm area}\big(\,\{\rho =u_t(x)\}\,\big)\; =\;
n^n\,\omega_n\,\int_X\frac{\alpha\,\sigma^{2\alpha
-1}\,\dot{\sigma}+\nabla\dot{\sigma}\cdot\nabla\sigma}
{\sqrt{\sigma^{2\alpha}+|\nabla\sigma |^2}}
\; . \end{equation}

We consider the particular deformation $S_t=\{\rho =u_t(x)\}$ defined by:
\[
u_t=\big(\,\overline{u^n}+e^t(u^n-\overline{u^n})\,\big)^{1/n}
\; ,\]
which satisfies $S_0=S$, and has $\sigma_t=a+e^t\,\tau$, so that all $S_t$ enclose the same volume. We must therefore have:
\begin{equation}\label{must}
0=\int_X \frac{\alpha\,\sigma^{2\alpha -1}\,\tau+|\nabla\tau |^2}
{\sqrt{\sigma^{2\alpha}+|\nabla\tau |^2}} \;\geq\; \alpha\, \int_X
\frac{\sigma^{2\alpha -1}\,\tau+|\nabla\tau |^2}
{\sqrt{\sigma^{2\alpha}+|\nabla\tau |^2}}
\; .\end{equation}

\begin{lem}\label{alpha-ineq}
There is a positive constant $C''$, depending only on $n$, $X$, and the constant $C$ from
Proposition~\ref{aux}, such that for large enough enclosed volume we have:
\begin{equation}\label{2}
\int_X
\frac{\sigma^{2\alpha -1}\,\tau+|\nabla\tau |^2}
{\sqrt{\sigma^{2\alpha}+|\nabla\tau |^2}} \;\geq\;
a^{-\alpha}\, \int_X\left(\, C''\,  |\nabla\tau |^2
-\frac{4}{n}\, a^{2\alpha -2}\,\tau^2\,\right)
\; .\end{equation}
\end{lem}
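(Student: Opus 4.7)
The plan is to split the integrand using $\sigma=(u/n)^n$ and therefore $|\nabla\tau|=\sigma^\alpha|\nabla u|$; this gives $\sqrt{\sigma^{2\alpha}+|\nabla\tau|^2}=\sigma^\alpha\sqrt{1+|\nabla u|^2}$, so the integrand decomposes as $F_1+F_2$ with
\[
F_1=\frac{\sigma^{\alpha-1}\tau}{\sqrt{1+|\nabla u|^2}}\, ,\qquad F_2=\frac{|\nabla\tau|^2}{\sigma^\alpha\sqrt{1+|\nabla u|^2}}\, .
\]
Three ingredients then enter: the gradient bound $|\nabla u|\le M$ from the Appendix; the oscillation hypothesis $\max u-\min u\le C$; and the fact, used throughout this section, that for large enclosed volume $\min u$ is forced to be large. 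Setting $\varepsilon:=\sup_X|\tau|/a$, these together give $\varepsilon\to 0$ as the enclosed volume grows, together with the pointwise bounds $\sigma\in[(1-\varepsilon)a,(1+\varepsilon)a]$ and $\sqrt{1+|\nabla u|^2}\le L:=\sqrt{1+M^2}$.

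The gradient piece is immediate: $F_2\ge(2^\alpha L)^{-1}\,a^{-\alpha}|\nabla\tau|^2$ pointwise. For the $\tau$-linear piece $F_1$, which is more delicate because $\tau$ may take either sign, I would write $1/\sqrt{1+|\nabla u|^2}=1-\delta$ with $0\le\delta\le|\nabla u|^2/2=|\nabla\tau|^2/(2\sigma^{2\alpha})$ (from the convexity inequality $(1+x)^{-1/2}\ge 1-x/2$), and then expand $\sigma^{\alpha-1}\tau=a^{\alpha-1}(1+\tau/a)^{-1/n}\tau$ through second order:
\[
\sigma^{\alpha-1}\tau=a^{\alpha-1}\tau-\frac{1}{n}\,a^{\alpha-2}\tau^2+R,\qquad |R|\le O(\varepsilon)\,a^{\alpha-2}\tau^2.
\]
Integration kills the linear term thanks to $\int_X\tau=0$. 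The $\delta$-correction I would bound as
\[
|\delta\,\sigma^{\alpha-1}\tau|\le\frac{|\nabla\tau|^2\,|\tau|}{2\sigma^{\alpha+1}}\le\mathrm{const}\cdot\varepsilon\cdot a^{-\alpha}|\nabla\tau|^2,
\]
using $|\tau|/\sigma\le\varepsilon/(1-\varepsilon)$ and $\sigma\ge(1-\varepsilon)a$.

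Assembling the two pieces,
\[
\int_X F\;\ge\;\bigl((2^\alpha L)^{-1}-O(\varepsilon)\bigr)\,a^{-\alpha}\!\int_X|\nabla\tau|^2\;-\;\bigl(\tfrac{1}{n}+O(\varepsilon)\bigr)\,a^{\alpha-2}\!\int_X\tau^2,
\]
and for the enclosed volume large enough that $\varepsilon$ is correspondingly small, the first coefficient stays above some fixed $C''>0$ while the second stays below $4/n$. This is exactly the claim, once one notes $a^{\alpha-2}=a^{-\alpha}\cdot a^{2\alpha-2}$. The main obstacle is the control of the $\delta$-correction in $F_1$: $\delta$ itself is only bounded, not small, so the necessary smallness has to be harvested from $|\tau|/\sigma$; the identity $|\nabla u|^2=|\nabla\tau|^2/\sigma^{2\alpha}$ is what converts $\delta\,\sigma^{\alpha-1}\tau$ into a small multiple of $a^{-\alpha}|\nabla\tau|^2$ that can be absorbed into $F_2$ with room to spare.
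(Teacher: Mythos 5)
Your proof is correct, and it reaches the same three-part structure as the paper's (a gradient term with coefficient comparable to $a^{-\alpha}$, a $\tau$-linear term killed by $\int_X\tau=0$, and a $\tau^2$ term with coefficient at most $\frac{4}{n}a^{\alpha-2}$), but the decomposition you use is genuinely different. The paper exploits the algebraic identity $\sigma^{2\alpha-1}\tau+|\nabla\tau|^2=\bigl(\sigma^{2\alpha}+|\nabla\tau|^2\bigr)-a\,\sigma^{2\alpha-1}$ to rewrite the integrand as $\sqrt{\sigma^{2\alpha}+|\nabla\tau|^2}-a\,\sigma^{2\alpha-1}\big/\sqrt{\sigma^{2\alpha}+|\nabla\tau|^2}$, bounds the second fraction above by $a\,\sigma^{\alpha-1}$, and then splits $\sqrt{\sigma^{2\alpha}+|\nabla\tau|^2}=\sigma^\alpha+|\nabla\tau|^2\big/\bigl(\sqrt{\sigma^{2\alpha}+|\nabla\tau|^2}+\sigma^\alpha\bigr)$; the upshot is that the term $\sigma^\alpha-a\,\sigma^{\alpha-1}=\sigma^{\alpha-1}\tau$ emerges with \emph{no} denominator, so no correction like your $\delta$ is ever needed. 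Your direct split into $F_1+F_2$ leaves the factor $\bigl(1+|\nabla u|^2\bigr)^{-1/2}$ attached to the sign-indefinite term, and you correctly identify that this is the delicate point: $\delta$ is only bounded, not small, so the smallness must come from $|\tau|/\sigma=O(\varepsilon)$, and the identity $|\nabla u|^2=|\nabla\tau|^2/\sigma^{2\alpha}$ converts the correction into $O(\varepsilon)\,a^{-\alpha}|\nabla\tau|^2$, absorbable into $F_2$. That absorption argument is sound. The only other divergence is cosmetic: the paper controls $(1+\tau/a)^{-1/n}$ by a first-order mean-value bound with the crude derivative constant $4/n$ on $(-\frac12,\frac12)$, whereas you use a genuine second-order Taylor expansion and obtain the sharper coefficient $\frac1n+O(\varepsilon)$, which sits comfortably below the $\frac4n$ demanded by the statement. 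Both routes need the same external inputs (the Appendix gradient bound, the oscillation bound, and $\min u\to\infty$ with the enclosed volume, which gives $\varepsilon\to0$), so the dependence of $C''$ on $n$, $X$, $C$ comes out the same either way.
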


Using this lemma, we shall now finish the proof of Proposition~\ref{aux}. Let $\lambda_1(X)$ be the first eigenvalue
of the Laplacian in $X$ and observe that:
\begin{eqnarray}
&& \int_X\left(\, C''\, |\nabla\tau |^2
-\frac{4}{n}\, a^{2\alpha -2}\,\tau^2\,\right)\; =\;
 \int_X\left(\, C''\, |\nabla\tau |^2
-\frac{4}{n}\, a^{-2/n}\,\tau^2\,\right)
\;\geq  \nonumber
\\  \label{3} && \geq \; \left(\, C''-\frac{4}{n}\,\frac{1}{\lambda_1(X)}\,\frac{1}{a^{2/n}}\right)
\,\int_X|\nabla \tau |^2
\; .\end{eqnarray}
The average $a$ becomes arbitrarily large as the enclosed volume increases, and the
coefficient $C''-4/(n\,\lambda_1(X)\,a^{2/n})$ becomes positive. Then the expression (\ref{3}) is positive
unless $\nabla\tau\equiv 0$, which  forces $\tau$ to be zero because it has zero average. Therefore, for
large enclosed volume the integrals
in (\ref{2}) and (\ref{must}) are positive unless
$\sigma$ and $u$ are constants, which means that the equality
in (\ref{must}) only holds true if $S=X\times (\mbox{\rm sphere})$. Proposition~\ref{aux} is now proved, and
also Theorems \ref{large-isop} and~\ref{large-bubble}.

\begin{proof}[Proof of Lemma \ref{alpha-ineq}]

Let us compare  $\sigma$ and $\tau$ with~$a$, for large enclosed volume. Write $u_1$ for $\,\max u\,$ and
$u_0$ for $\,\min u$, so we have $u_1-u_0\leq C$. The inequalities:
\[ u_1-C \leq u_0 \leq u_1 \leq u_0+ C\]
give rise to the inequalities:
\[
\left( 1-\frac{C}{u_1}\right)^n \leq \left(\frac{u_0}{u_1}\right)^n
\leq\frac{\sigma}{a}\leq\left(\frac{u_1}{u_0}\right)^n \leq \left(
1+\frac{C}{u_0}\right)^n
\; .\]
For large enclosed volume, $u_0$  and  $u_1$  are arbitrarily
 large compared to $C$, hence we may
assume $(1/2)a\leq\sigma (x)\leq 2a$ for all $x\in X$.

The function $u\mapsto u^n$ has monotone increasing derivative, therefore:
\[
\frac{u_1^n-u_0^n}{n^n}\;\leq\; \frac{n\, (u_1-u_0)\, u_1^{n-1}}{n^n}
\; \leq \; \frac{C\, u_1^{n-1}}{n^{n-1}}\; =\; C\,\left[\left(\frac{u_1}{n}\right)^n\right]^\alpha
\; \leq \; C\, (2a)^\alpha\; <\; 2\, C\, a^\alpha
\; ,\]
The interval $\big[\,(u_1/n)^n   \, ,\, (u_0/n)^n  \,\big]$ has length bounded by $2\, C\, a^\alpha$. Since it contains the number $a$ and all values of $\sigma$, we deduce:
\[ |\tau |\; =\; |\sigma -a|\; \leq\; 2\,C\, a^\alpha\; .\]

We estimate:
\begin{eqnarray*}
\frac{\sigma^{2\alpha -1}\,\tau+|\nabla\tau |^2}
{\sqrt{\sigma^{2\alpha}+|\nabla\tau |^2}} &=&
\sqrt{\sigma^{2\alpha}+|\nabla\tau |^2}-
\frac{a\,\sigma^{2\alpha -1}}
{\sqrt{\sigma^{2\alpha}+|\nabla\tau |^2}}\;\geq \\
&\geq& \sqrt{\sigma^{2\alpha}+|\nabla\tau |^2}
-a\,\sigma^{\alpha -1}
\; ,\end{eqnarray*}
and decompose the square root as
$\sigma^\alpha$ plus a multiple of $|\nabla\tau |^2\,$:
\[
\sqrt{\sigma^{2\alpha}+|\nabla\tau |^2}-\sigma^\alpha \; =\;
\frac{|\nabla\tau |^2}{\sqrt{\sigma^{2\alpha}+|\nabla\tau |^2}+\sigma^\alpha}
\; .\]
Let $H_1$ be an upper bound for mean curvature provided by the estimate (\ref{H-osc-bound})
in Theorem~\ref{teo-estimates}. Now  Theorem \ref{grad-estimate} in the Appendix provides a constant $C'$,
depending only on $n,X,H_1,C$, such that $|\nabla u|\leq C'$. Thus:
\[
\frac{|\nabla\tau |^2}{\sqrt{\sigma^{2\alpha}+|\nabla\tau |^2}+\sigma^\alpha}
\;\geq  \frac{|\nabla\tau |^2}{(\sqrt{1+C'^2}+1)\,\sigma^\alpha}
\;\geq\; C''\, a^{-\alpha}\, |\nabla\tau |^2
\; , \]
where $C''$ is a positive constant that depends only on $n,X,C$. We now have:
\[
\frac{\sigma^{2\alpha -1}\,\tau+|\nabla\tau |^2}
{\sqrt{\sigma^{2\alpha}+|\nabla\tau |^2}}
\;\geq\; C''\, a^{-\alpha}\, |\nabla\tau |^2
+\sigma^\alpha -a\,\sigma^{\alpha -1} \;=\;
C''\, a^{-\alpha}\, |\nabla\tau |^2
+\,\sigma^{\alpha -1}\,\tau
\; .\]
We further analyze:
\[
\sigma^{\alpha -1}\,\tau =(a+\tau )^{\alpha -1}\,\tau =
a^{\alpha -1}\left(1+\frac{\tau}{a}\right)^{-1/n}\,\tau
\; .\]
Since $|\tau |<2\, C\, a^\alpha$ and $\alpha <1$,  we may
assume  $-1/2<\frac{\tau}{a}<1/2$. But for $t\in (\, -1/2\,
,\, 1/2\, )$  it is $\left|\frac{d}{dt}(1+t)^{-1/n}\right|
<\frac{4}{n}$. It follows that:
\begin{equation}\label{4}
1-\frac{4}{n}\,\left|\frac{\tau}{a}\right| \;\leq\;
\left(1+\frac{\tau}{a}\right)^{-1/n} \;\leq\;
1+\frac{4}{n}\,\left|\frac{\tau}{a}\right|
\; .\end{equation}
Where $\tau\geq 0$, use the first inequality in (\ref{4}) and get:
\[
\left(1+\frac{\tau}{a}\right)^{-1/n}\, \tau \;\geq \;
\tau -\frac{4}{n}\,\frac{\tau}{a}\, \tau \; =\; \tau
-\frac{4}{n}\,\frac{\tau^2}{a}
\; .\]
where $\tau <0$, use the second inequality in (\ref{4}) and get:
\[
\left(1+\frac{\tau}{a}\right)^{-1/n}\, \tau \;\geq \;
\tau +\frac{4}{n}\,\left|\frac{\tau}{a}\right|\, \tau \; =\; \tau
-\frac{4}{n}\,\frac{\tau^2}{a}
\; .\]
So we have
$\sigma^{\alpha -1}\tau\geq a^{\alpha -1}\big(\tau -\frac{4}{n}\frac{\tau^2}{a}\big)$
everywhere, and we arrive at the inequality:
\[
\int_X
\frac{\sigma^{2\alpha -1}\,\tau+|\nabla\tau |^2}
{\sqrt{\sigma^{2\alpha}+|\nabla\tau |^2}}
\; \geq\; \int_X\left(\, C''\, a^{-\alpha}\, |\nabla\tau |^2
+a^{\alpha -1}\, \tau-\frac{4}{n}\, a^{\alpha -2}\,\tau^2\,\right)
\; ,\]
which yields Lemma~\ref{alpha-ineq} by using $\int_X\tau =0$ and taking the
factor $a^{-\alpha}$ out of the integral.
\end{proof}


\section{A special soap bubble family}\label{sec-estables}

In this section we prove the existence part of Theorem \ref{teo-estables}. The conditions that
make the construction of the family $\{ S_v\}$ possible are stated in detail
in Theorem~\ref{teo-family} below. The
ambient manifold is $M=X\times{\mathbb R}$, with $X$ a
suitable 2--dimensional Riemannian manifold.  In particular $M$ is
3-dimensional and our family consists of surfaces. They lie in a region where the Ricci curvature
is somewhere negative. It must be stressed that, in some of these families, the large soap 
bubbles are stable (proved in  Section~\ref{sec-stability})
but not  isoperimetric, i.e.\ the same amount of volume can be enclosed 
using less area. We shall also see that,
as the enclosed volume tends to infinity, their mean curvatures descend to a positive constant, not to zero.

There is an annulus $Y\subset X$ such that all the surfaces in the
family will be contained inside $Y\times{\mathbb R}$  (this
already prevents those surfaces from being of the form $X\times
S^0$),  and so we need only worry about the geometry
of the domain $Y\times{\mathbb R}$. Then $X$ can be any
closed Riemannian surface 
containing an isometric copy of~$Y$.

We describe $Y$ as $I\times S^1$,
where $I$  is an interval symmetric about 0.  Denote by  $s$
the coordinate along  $I$  and by
$\theta$  the angle coordinate along  $S^1\, =\,
[0,2\pi ]/_{0\sim 2\pi}$. Finally let  $y$  be the
coordinate along the ${\mathbb R}$ factor.

We endow $Y$  with a rotationally symmetric metric:
\begin{equation}\label{metric2}
G_c\; =\; ds^2+c^2\cdot f(s)^2\, d\theta^2
\; ,\end{equation}
Where $f(s)$ is positive and even,  that is $f(-s)=f(s)$,
and $c$ is a positive constant. The metric on
$Y\times{\mathbb R}$  is:
\begin{equation}\label{metric3}
G'_c\; =\; ds^2+c^2\cdot f(s)^2\, d\theta^2+dy^2
\; .\end{equation}
Notice that $Y\times{\mathbb R}$  has an isometric circle action,
defined by translating  $\theta$ by constants, and the following reflectional
symmetries:
\begin{equation}\label{the-symmetries}
(\, s\, ,\,\theta\, ,\, y\, )\;\longmapsto\; (\, -s\, ,\, \theta\, ,\, y\, )
\quad ,\quad
(\, s\, ,\,\theta\, ,\, y\, )\;\longmapsto\;  (\, s\, ,\,\theta\, ,\, \mbox{\rm const}-y\, )
\; .\end{equation}
The following auxiliary functions turn out to be very useful:
\[
F(s)=\int_0^sf(s)\, ds \quad ,\quad \varphi (s)=\frac{F(s)}{f(s)}
\; .\]
Since $f$ is even, both $F$ and $\varphi$ are odd. The basic identity
relating $f$ to $\varphi$ is:
\begin{equation}\label{f-phi-eq}
\frac{1-\varphi_s}{\varphi}\; =\; \frac{f_s}{f}
\; .\end{equation}
A function $\varphi (s)$ coming from this construction is not arbitrary:
it has to be an odd function,
vanish only at $\, s=0\,$,
 and satisfy \ $\varphi'(0)=1\,$.  Conversely any  $\varphi (s)$
meeting these three criteria comes from a positive even function; in fact,
the functions that $\varphi$ comes from are the members 
of the following one-parameter family:
\begin{equation}\label{f-phi-sol}
f(s)\; =\;\frac{c}{\varphi (s)}\cdot\exp\int\frac{ds}{\varphi (s)}
\quad ,\quad c\in {\mathbb R}^+
\; ,\end{equation}
The value  $s=0$  is the only
one where this formula may
pose a problem.  But if  $\varphi'(0)=1$  then  $\varphi
(s)\equiv s+s^2\,\tilde{\varphi} (s)$  for some smooth odd function
$\tilde{\varphi} (s)\,$,  and thanks to
$\frac{1}{s+s^2\tilde{\varphi} }=\frac{1}{s}-\frac{\tilde{\varphi} }{1+s\tilde{\varphi} }$  we
rewrite (\ref{f-phi-sol}) as:
\[
f(s)\; =\; \frac{c}{1+s\,\tilde{\varphi}  (s)}\cdot\exp\int_0^s\frac{-\tilde{\varphi} (s)\,
ds}{1+s\,\tilde{\varphi}  (s)}
\; ,\]
which is non-singular at $s=0$  and
defines a positive even function.

\begin{thm}\label{teo-family}
Suppose $\log F$ has a first inflection point at a value $s_0>0$. More concretely,
suppose $(\log F)_{ss}$ is negative in $0<s< s_0$ and positive in an interval starting at $s_0$.
Then $Y\times{\mathbb R}$ contains a family $\{ S_{s_1}\}_{0<s_1<s_0}$ of soap bubbles,
all contained in $(-s_0,s_0)\times S^1\times{\mathbb R}$,
and with enclosed volume going to infinity
as $s\to s_0$.
\end{thm}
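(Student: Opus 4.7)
The plan is to exploit the symmetries (\ref{the-symmetries}) of $Y\times{\mathbb R}$ together with the $S^1$ rotation invariance, and to look for soap bubbles $S_{s_1}$ invariant under all of them. Such a bubble is the closure of the two graphs $y=\pm u(s)$ over $|s|<s_1$, where $u$ is a non-negative even function vanishing at $\pm s_1$ with a vertical tangent there so that the two sheets join smoothly. Specializing formula (\ref{H-u}) of Section~\ref{sec-regularity} to $n=1$ and to the warped product base metric $G_c$, the CMC condition on such a graph becomes
\[
-\frac{1}{f(s)}\frac{d}{ds}\!\left(\frac{f(s)\,u_s}{\sqrt{1+u_s^2}}\right)=H.
\]
Multiplying by $f$, integrating from $0$ to $s$, and using $u_s(0)=0$ (forced by the reflection $s\mapsto -s$) produces the clean first integral
\[
\frac{u_s(s)}{\sqrt{1+u_s(s)^2}}=-H\,\varphi(s).
\]

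Next I would use this first integral to build the family. The vertical-tangent requirement at $s_1$ forces $H\varphi(s_1)=1$, while a smooth solution on $[0,s_1]$ requires $H\varphi(s)\leq 1$ throughout, with equality only at the endpoint. The identity $(\log F)_{ss}=-\varphi_s/\varphi^2$ shows that the hypothesis on $\log F$ says precisely that $\varphi$ is strictly increasing on $(0,s_0)$ and has a critical point at $s_0$; hence for each $s_1\in(0,s_0)$ the choice $H=1/\varphi(s_1)$ makes both conditions hold and yields a well-defined $u$ on $[-s_1,s_1]$ whose rotated graph $S_{s_1}$ sits in $(-s_1,s_1)\times S^1\times{\mathbb R}\subset (-s_0,s_0)\times S^1\times{\mathbb R}$. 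Smoothness at the closing circles $\{s=\pm s_1,\,y=0\}$ follows because the two sheets meet with a common vertical tangent, so the reflection $y\mapsto -y$ extends to a smooth involution of the resulting embedded closed surface.

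To verify that the enclosed volume diverges as $s_1\to s_0$, I would rewrite the first integral as $u_s=-H\varphi/\sqrt{1-H^2\varphi^2}$ and compute
\[
u(0)=\int_0^{s_1}\frac{H\,\varphi(s)}{\sqrt{1-H^2\varphi(s)^2}}\,ds.
\]
Since $\varphi$ attains a local maximum at $s_0$ and $H=1/\varphi(s_1)\to 1/\varphi(s_0)$, the quantity $1-H^2\varphi(s)^2$ vanishes to a positive even order at $s_0$, so the integrand develops a non-integrable singularity there. This gives $u(0)\to\infty$; more precisely, for every fixed $s^{\ast}<s_0$ one has $u(s^{\ast})\to\infty$, so the enclosed volume $V(s_1)=8\pi c\int_0^{s_1}u(s)\,f(s)\,ds$ also tends to infinity.

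The main technical point will be to make precise the claim that $\varphi$ has a genuine local maximum at $s_0$, not merely a higher-order inflection, so that $1-H^2\varphi^2$ really does produce a non-integrable singularity as $s_1\to s_0$. The hypothesis only tells us that $(\log F)_{ss}$ changes sign at $s_0$, i.e.\ that $\varphi_s$ changes from positive to negative there, so $s_0$ is a strict local maximum and the first non-vanishing term in the Taylor expansion of $\varphi(s_0)-\varphi(s)$ is of even positive order; this is exactly what is needed. A useful byproduct of the construction, relevant to Theorem~\ref{teo-estables}, is that $H=1/\varphi(s_1)$ stays bounded below by the positive constant $1/\varphi(s_0)$ for all $s_1\in(0,s_0)$, so the family automatically satisfies the required uniform positive lower bound on mean curvatures.
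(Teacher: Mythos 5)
Your proposal is correct and follows essentially the same route as the paper: the same rotationally and reflectionally symmetric ansatz, the same first integral $u_s/\sqrt{1+u_s^2}=-H\,\varphi(s)$, the choice $H=1/\varphi(s_1)$, and the observation that $\varphi_s(s_0)=0$ makes the profile integral blow up like $\int (s_0-s)^{-1}\,ds$ as $s_1\to s_0$, which drives the enclosed volume to infinity. Two small points: smoothness of the closed profile at $\{s=\pm s_1\}$ does not follow merely from the common vertical tangent and the reflection $y\mapsto -y$ (that gives only $\mathcal{C}^1$ a priori; the paper invokes elliptic regularity for the CMC equation, e.g.\ after rewriting the profile locally as a graph $s=g(y)$), and your remark about an even-order Taylor expansion is unnecessary since $\varphi(s_0)-\varphi(s)=O\big((s_0-s)^2\big)$ already follows from $\varphi_s(s_0)=0$, even if $\varphi$ is infinitely flat at $s_0$.
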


Identity (\ref{f-phi-eq}) gives $(\log F)_{ss}= - (f^2/F^2)\,\varphi_s$. The hypothesis
in Theorem~\ref{teo-family} is equivalent to $\varphi_s$ being positive
in $[0,s_0)$ and negative in some interval starting at $s_0$. In particular
$\varphi$ has a first local maximum at $s=s_0$, see Figure~\ref{fig-funciones}.

\begin{figure}[ht]
\includegraphics[scale=0.55]{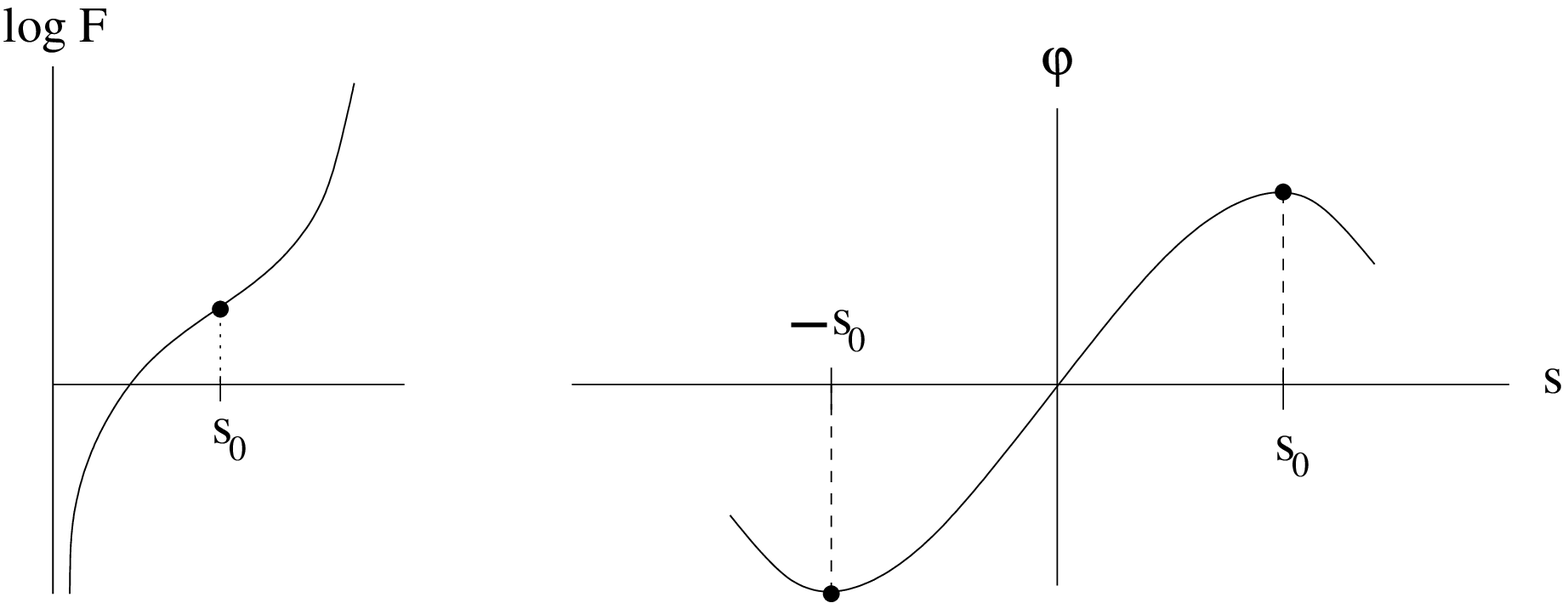}
\caption{}
\label{fig-funciones}
\end{figure}

Here is an example with $I=\big( -\pi +\varepsilon\, ,\, \pi -\varepsilon\big)$ and $s_0=\pi /2\,$:
\begin{equation}\label{ex-1}
F=2\,\tan \frac{s}{2}\;\; ,\;\; f=\frac{1}{\cos^2 (s/2)}\;\; ,\;\; \varphi
=\sin s
\; , \end{equation}
and another one with $I=\big( -1+\varepsilon\, ,\, 1-\varepsilon\big)$ and  $s_0=1/\sqrt{3}$:
\begin{equation}\label{ex-2}
F(s)=s\, (1- s^2)^{-1/2}\;\; ,\;\;
f(s)=(1- s^2)^{-3/2}\;\; ,\;\; \varphi (s)=s- s^3
\; .\end{equation}

By Theorem \ref{large-bubble}, we expect the surface piece $Y$ to have negative
Gaussian curvature somewhere in $(-s_0,s_0)\times S^1$. Let us directly
check this. Identity~(\ref{f-phi-eq}) implies:
\[ -\varphi\, f_{ss}\; =\; (f\,\varphi_s)_s-\varphi\,\frac{f_s^2}{f}\; ,\]
and for $s>0$ we get $ -\varphi\, f_{ss}\leq  (f\,\varphi_s)_s$.
Since the function $f\varphi_s$ is positive at $s=0$ and zero at $s=s_0$, its derivative has
to be negative in some interval $I_-\subset (0,s_0]$. Then $f_{ss}$ is positive in that same
interval. The Gaussian curvature of $Y$ equals $-f_{ss}/f$ and is thus negative in $I_0\times S^1$
and also in the image of this set under the reflection $(s,\theta )\mapsto (-s,\theta )$.

\begin{proof}[Proof of Theorem \ref{teo-family}]

We consider constant mean curvature surfaces
which are invariant under the circle action and the reflection
$(s,\theta ,y)\mapsto (s,\theta ,-y)$,
i.e.\ surfaces of the form:
\[
S_u \;=\; \{\;  \big( s,\theta ,\pm u(s)\big)\; :\; -s_1\leq s\leq s_1
\; ,\; 0\le\theta\leq 2\pi\;\}\; \approx\; C_u\times S^1
\; , \]
which is the result of rotating a profile curve $C_u=\{ y=\pm u(s)\}\subset
(sy\mbox{\rm \ plane})$. We also allow translates of these in the $y$-direction.  In
our construction  $u(s)$  will be an even
function defined in some symmetric interval $[-s_1,s_1]\subset I$ and satisfying:
\[ u(-s_1)=u(s_1)=0\quad ,\quad u'(-s_1)=+\infty \quad ,\quad u'(s_1)=-\infty\; ,\]
so that the closed profile $C_u$ is ${\mathcal C}^1$. The
condition for $S_u$ to have constant mean curvature is an ODE on the
profile curve $C_u$, and elliptic regularity implies that $C_u$ is
actually smooth. Then $S_u$ is a smooth surface
diffeomorphic with ${\mathbb T}^2$. Both $C_u$ and $S_u$ are embedded
if and only if $u(s)$ satisfies:
\[
u(s)>0\;\; ,\;\;\;\mbox{\rm for all }\; s\in (-s_1,s_1)
\; ,\]
in which case the closed profile bounds a disk region $D$ in the $sy$ plane. Then
$S_u$ is the boundary of the solid torus
$\{\, (s,\theta ,y)\, :\, (s,y)\in D\,\}\subset Y\times{\mathbb R}$
and is thus a soap bubble.

We choose for $S_u$ the
unit normal $\nu$ which points away from the solid torus.  The
mean curvature, as defined in Section~\ref{sec-monotonicity},
is given on  $S_u\cap\{ y>0\}$  by:
\begin{equation}\label{a}
H\; =\; \frac{-u''(s)}{\big( 1+u'(s)^2\big)^{3/2}}
\; +\;\frac{-u'(s)}{\big(
1+u'(s)^2\big)^{1/2}}\;\frac{f'(s)}{f(s)}
\; .\end{equation}
Notice that the equation is the same if we replace $f$ with
any positive constant multiple~$c\, f$. Once $\varphi$ is fixed, the
family $\{ c\cdot f\}_{c>0}$ is fixed and a solution to (\ref{a}) with $H$
constant defines a surface that has constant mean curvature $H$ with respect to all
the metrics~$G'_c$.

We can consider $\nu$ as lying flat on the $sy$-plane and
orthogonal to the profile.  We define the angle
$\alpha (s)$,  from the
$y$-axis  to  $\nu$,  as follows:
\[
-\frac{\pi}{2}\;\leq\,\alpha (s)\;\leq\;\frac{\pi}{2}
\quad ,\quad
\nu|_{y\geq 0}\; =\; \sin\alpha\,\partial_s+\cos\alpha\,\partial_y
\, ,\]
so that $\tan\alpha (s) = - u'(s)$ and (\ref{a}) becomes
$H\; =\; \frac{d}{ds}\,\big(\,\sin\alpha (s)\,\big)\, +\, (f_s/f)\sin\alpha
(s) $, equivalent to:
\begin{equation}\label{b}
\frac{d}{ds}\,\big(\, f(s)\,\sin\alpha (s)\,\big)\; =\; H\, f(s)
\; . \end{equation}
Since $f$ is even, a solution $u(s)$ to (\ref{a})  is even
if and only if $u'(0)=0$.   This
condition is equivalent to  $\alpha (0)=0$. The
conditions $u'(\pm s_1)=\mp\infty$ are equivalent to
$\,\sin\alpha (\pm s_1)=\pm 1$.

For a  constant value  $H$,  solutions to
(\ref{b}) with $\alpha (0)=0$ are given by:
\begin{equation}\label{sin-alpha}
\sin\alpha (s)=H\,\varphi (s)
\; .\end{equation}
The conditions $\sin\alpha (\pm s_1)=\pm 1$ are now equivalent to $H=1/\varphi (s_1)$. Since
we take $s_1\in (0,s_0]$, the constant $H$ takes values  in the interval 
$[\, 1/\varphi (s_0)\, ,\, +\infty )$; values smaller than $1/\varphi (s_0)$ will not 
appear in our construction. 

Expressing  $u'(s)=-\tan\alpha\,$  in terms of
$\,\sin\alpha$,  we arrive at:
\begin{equation}\label{c}
u(s)\;  =\; \mbox{\rm const} -\int_0^s\frac{\varphi (s)\, ds}
{\sqrt{\frac{1}{H^2}-\varphi (s)^2}}
\; =\; \mbox{\rm const} -\int_0^s\frac{\varphi (s)\, ds}
{\sqrt{\varphi (s_1)^2-\varphi (s)^2}}
\; ,\end{equation}
 an explicit formula that involves only $\varphi$. For each $s_1\in (0,s_0]$, the function:
\begin{equation}\label{int-phi}
u_{s_1}:(-s_1,s_1)\;\longrightarrow\;{\mathbb R}\quad ,\quad u_{s_1}(s)\; :=\;
 -\int_0^s\frac{\varphi (s)\, ds}
{\sqrt{\varphi (s_1)^2-\varphi (s)^2}}
\end{equation}
is the solution to (\ref{a}) with the following  data:
\[ H=1/\varphi (s_1)\quad ,\quad u(0)\; =\; 0\quad ,\quad u'(0)\; =\; 0\; .\]
We study its behavior in two cases: $s_1<s_0$ or $s_1=s_0$.

{\bf Case} $s_1<s_0$. For $s$ close to $s_1$, the integrand in (\ref{int-phi}) behaves like a positive multiple
of $(s_1-s)^{-1/2}$ because $\varphi(s)^2$ has positive derivative at $s=s_1$. Since
$\,\int_{s_1-\varepsilon}^{s_1} (s_1-s)^{-1/2}\, ds$ is finite,
$u_{s_1}(s)$ has a finite limit as $s\to s_1$. It has the same finite limit as $s\to -s_1$, and so it
extends to the closed interval $[-s_1,s_1]$. The extended function is negative except at $s=0$,
and achieves its minimum at the endpoints $\pm s_1$. Then the function
\[  \tilde{u}_{s_1}:[-s_1,s_1]\;\longrightarrow\;{\mathbb R}
\quad ,\quad \tilde{u}_{s_1}(s)\; :=\;  u_{s_1}(s)-u_{s_1}(\pm s_1)
\]
is the solution to (\ref{a}) with data:
\[ H=1/\varphi (s_1)\quad ,\quad \tilde{u}(-s_1)\; =\; \tilde{u}(s_1)\; =\; 0\; , \]
and satisfies $\tilde{u}_{s_1}(s)>0$ for $s\in (-s_1,s_1)$. The graph of $\tilde{u}_{s_1}$
meets the graph of $- \tilde{u}_{s_1}$ only at the endpoints $(\pm s_1,0)$, where
the derivative is infinite. By the previous discussion,
the surface $S_{s_1}:=S_{\tilde{u}_{s_1}}$ is a soap bubble in $Y\times{\mathbb R}$ with respect
to all metrics~$G'_c$.

Doing this for all $s_1\in (0,s_0)$, we get a soap bubble family $\{ S_{s_1}\}_{0<s_1<s_0}$. Each
member $S_{s_1}$ of this family  is contained
in the part $[-s_1,s_1]\times S^1\times{\mathbb R}$, hence
they all lie inside $(-s_0,s_0)\times S^1\times{\mathbb R}$ which is a proper subset of $Y\times{\mathbb R}$.

{\bf Case} $s_1=s_0$. Since $\varphi (s)^2$ has a local maximum at $s=s_0$, for $s$ close
to $s_0$ the integrand in (\ref{int-phi}) is at least as large as a
positive multiple of  $(s_0-s)^{-1}$. From
$\int_{s_0-\varepsilon}^{s_0} (s_0-s)^{-1}\, ds=+\infty$ we then deduce
that the function $u_{s_0}$ tends to $-\infty$, at least at a logarithmic rate, as $s\to s_0$
and also as $s\to -s_0$. The undergraph:
\[  E\; =\; \{\; (s,y)\; :\: y\leq u_{s_0}(s)\;\} \;\subset\; (-s_0,s_0)\times{\mathbb R}\; , \]
has infinite area  both in the standard area measure and in
the measure $c\, f(s)\, dsdy$. 

For $s_1<s_0$, define a closed profile $\widetilde{C}_{s_1}$ as the union of the graphs of $u_{s_1}$ and of
the reflected function $2\, u_{s_1}(\pm s_1)-u_{s_1}(s)$. The point $(0,0)$ is where $y$ is maximum
on each $\widetilde{C}_{s_1}$.  Denote by $\Omega_{s_1}$
the region bounded by $\widetilde{C}_{s_1}$ in the $sy$ plane.

For fixed $s$ the integral (\ref{int-phi}) is an increasing function
of $s_1$. This implies that the D-shaped domains  
\[ D_{s_1}\; =\ \{\, (s,y)\, :\, \min u_{s_1}\leq y\leq u_{s_1}(s)\, ,\, -s_1\leq s\leq s_1\,\} \;  . \] 
expand as $s_1\nearrow s_0$,
and they fill up $E$. A fortiori, the O-shaped regions $\Omega_{s_1}$ also fill up $E$, as shown in Figure~\ref{fig-unbounded}. Therefore the area of $\Om_{s_1}$  in the measure $c\, f(s)\, dsdy$   goes to infinity as $s_1\to s_0$, and so does
the volume enclosed by $S_{s_1}$  in~$Y\times{\mathbb R}$.

The mean curvature $H=1/\varphi (s_1)$ decreases as $s_1\to s_0$, but the limit is the positive numbert~$1/\varphi (s_0)$.
Recall that values $H<1/\varphi (s_0)$ never appear in this construction.

\begin{figure}[ht]
\includegraphics[scale=0.5]{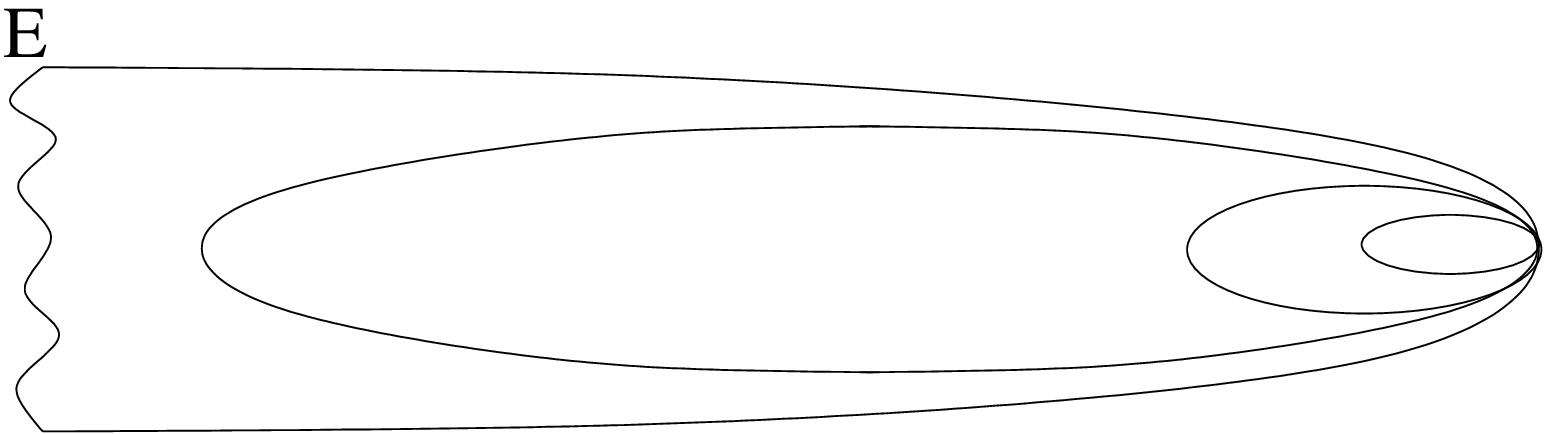}
\caption{}
\label{fig-unbounded}
\end{figure}

We have a diffeomorphism $(0,s_0)\to (0,+\infty)$ that maps $s_1\in (0,s_0)$ to the volume
enclosed by $S_{s_1}$. So we can use the enclosed volume as parameter in place of $s_1$. In
this way the soap bubble family becomes $\{ S_v\}_{v>0}$, as stated in
Theorem~\ref{teo-estables}.

\end{proof}


\section{Stability in the special family}\label{sec-stability}

In this section we prove the stability part of Theorem~\ref{teo-estables}. The following
theorem explicitly gives a condition under which
the large surfaces in the family $\{ S_v\}$ are stable. We shall also show that such
condition is satisfied in some examples.

\begin{thm}\label{teo-estabilidad}
Let $f,F,\varphi ,s_0$ be as in Theorem \ref{teo-family}. If $(\log f)_{ss}>0$ in $(-s_0,s_0)$,
then there is a number $\beta >0$ such that if
$\varphi (s_1)/c>\beta$ then the soap bubble $S_{s_1}$ is
stable with respect to the metric $G'_c$.
\end{thm}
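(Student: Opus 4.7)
The plan is to exploit the two symmetries of $S_{s_1}$---rotations in $\theta$ and the reflection $y\mapsto -y$---to diagonalize the index form $Q$ into blocks, each of which can be handled separately. Parametrize the profile $C_{s_1}$ by arclength $t$, and Fourier-expand any test function on $S_{s_1}$ as $g(t,\theta)=\sum_{k\in\mathbb{Z}}g_k(t)e^{ik\theta}$. Since $P$ and the induced metric are $\theta$-independent, the index form decouples:
\[
Q(g)=2\pi c\sum_k Q_k(g_k),\qquad Q_k(h)=\int_{C_{s_1}}\!\!\left(|h'|^2+\tfrac{k^2}{c^2 f^2}h^2+Ph^2\right)f\,dt.
\]
For the nonzero Fourier modes I would derive, using the computations from Section \ref{sec-estimates-2}, the explicit formula $P=H^2\bigl[(\log f)_{ss}\varphi^2-\varphi_s^2\bigr]$ and conclude that $|P|$ is bounded by a constant $P_0$ depending only on $Y$. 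Then $\tfrac{k^2}{c^2 f^2}+P\geq \tfrac{1}{c^2 f_{\max}^2}-P_0$, which is positive once $c$ is small enough; this yields a first lower bound on $\beta$ under the guise $\varphi(s_1)/c>\beta_1$.

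For the $k=0$ mode, split $g_0=h_++h_-$ into parts even and odd under $y\mapsto -y$. Odd functions on the closed profile correspond to functions on the upper-half profile with Dirichlet boundary conditions at the tips $(\pm s_1,0)$, even functions correspond to Neumann boundary conditions. The Killing field $\partial_y$ furnishes the Jacobi field $\nu_y=\cos\alpha$, positive on the upper-half interior and vanishing at the tips. The standard substitution $h_-=\cos\alpha\cdot w$ and an integration by parts (whose boundary terms vanish because $\cos\alpha(\text{tip})=0$) reduce the odd sector to
\[
Q_0(h_-)=\int f\cos^2\alpha\,(w')^2\,dt\ \geq\ 0,
\]
with equality exactly when $h_-$ is a multiple of $\nu_y$.

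The even sector is the subtle one. I would use the normal component $\phi=\langle\partial_{s_1}F,\nu\rangle$ of the variation field of the CMC family $\{S_{s_1}\}$. Three properties matter. First, the standard variation-of-mean-curvature formula gives $L\phi=\dot H=-\varphi_s(s_1)/\varphi(s_1)^2$, a strictly negative constant on $(0,s_0)$. Second, from the tip asymptotics $\tilde u_{s_1}(s)\sim A(s_1)\sqrt{s_1-s}$ derived in Section~\ref{sec-estables}, one checks that $\phi$ extends smoothly across the tips with $\phi(\text{tip})=1$ and $\phi_t(\text{tip})=0$, so $\phi$ satisfies Neumann BC. Third, the profiles $C_{s_1}$ are nested, so $\phi>0$ throughout the profile. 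Given positivity, substitute $h_+=\phi\cdot w$: since $\phi>0$ and $\phi_t(\text{tip})=0$, the Neumann condition for $h_+$ becomes a Neumann condition for $w$, and the usual boundary terms vanish. Integration by parts then yields
\[
Q_0(h_+)=\int_{C_{s_1}} f\phi^2(w')^2\,dt+(-\dot H)\int_{C_{s_1}}\phi\, w^2 f\,dt,
\]
both terms non-negative, and strictly positive unless $h_+\equiv 0$. No volume constraint is needed in the even sector.

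Combining everything, $Q(g)>0$ on mean-zero test functions except along $\mathbb{R}\cdot\nu_y$, which is the normal component of a Killing field---precisely stability.

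The principal obstacle is step three of the even-sector argument: the strict positivity $\phi>0$. The rough idea is to use monotonicity of the integrand in (\ref{int-phi}) with respect to $s_1$ to establish that the enclosed regions $\Omega_{s_1}$ form a nested family, but because $\tilde u_{s_1}$ depends on $s_1$ both through its domain and through the normalization $u_{s_1}(s_1)$, the tips require care. This is where the hypothesis $(\log f)_{ss}>0$ enters: it gives $P$ a definite positive contribution in the interior of the profile via the formula $P=H^2[(\log f)_{ss}\varphi^2-\varphi_s^2]$, which (together with $\dot H<0$ and the tip BC) allows a maximum-principle argument ruling out any zero of $\phi$ along $C_{s_1}$. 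The same hypothesis, encoding strict negativity of the Gauss curvature of $Y$ on $(-s_0,s_0)\times S^1$, is what makes the variation-of-CMC test function well-behaved throughout the range $s_1\in(0,s_0)$.
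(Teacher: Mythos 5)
Your overall architecture---decompose by the $\theta$-rotations and the reflection in $y$, kill the nonzero Fourier modes by making the potential term dominate when $\varphi(s_1)/c$ is large, and handle the $y$-odd part with the Killing--Jacobi field $\langle\partial_y,\nu\rangle$---agrees with the paper in those sectors; your criterion for $k\neq 0$ is essentially the paper's condition that $\frac{1}{A}+B>0$, i.e.\ $\beta^2=\max f^2\,|-\varphi\,\varphi_{ss}-\varphi_s|$. The gap is in the even sector, which is exactly where the difficulty of the theorem lives.

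Your identity $Q_0(h_+)=\int f\phi^2(w')^2+(-\dot H)\int\phi\, w^2 f$ carries a fatal sign error. By Lemma~\ref{lemma-H.} the normal speed $\phi$ of a CMC family satisfies $P\phi-\Delta^S\phi=\dot H$ (not $-\dot H$), so Lemma~\ref{G-F} with $\gamma=\phi$ gives $Q_0(\phi w)=\int\phi^2|\nabla w|^2+\dot H\int\phi\, w^2+(\mbox{boundary})$, and with $\dot H<0$ and $\phi>0$ the zeroth-order term is $\leq 0$, not $\geq 0$. As written, your formula would prove $Q(1)>0$ for the constant function, i.e.\ stability with no volume constraint; for the model case of concentric round spheres (where $\phi\equiv 1$ and $\dot H<0$) one has $Q(1)=-\int|II|^2<0$, a contradiction. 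So the claim that ``no volume constraint is needed in the even sector'' cannot stand: the constants live precisely there, and the zero-average hypothesis must be spent there. Even with the sign corrected, $h_+=\phi\,w$ only reduces the even sector to a weighted Poincar\'e inequality that is not obviously available, and the positivity $\phi>0$ (nestedness through the tips) remains, as you yourself note, unproven. The paper takes a different route that avoids $\phi$ entirely: it splits the $y$-even part further by parity in $s$. For the $s$-odd part, the hypothesis $(\log f)_{ss}>0$ makes $\varphi$ a positive strict supersolution of the Jacobi equation (the computation $(A\varphi_s)_s-B\varphi=-(\mbox{positive})\cdot\varphi\,(\log f)_{ss}$), so the substitution $g=\varphi\, g_1$ wins outright---note this is where that hypothesis actually enters, not through any positivity of $P=H^2[(\log f)_{ss}\varphi^2-\varphi_s^2]$, which need not hold. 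For the $s$-even zero mode, the zero-average condition produces an interior zero $\overline{s}$ of $a_0$ on each quadrant of the profile, and one patches the $\psi$-substitution on $[0,\overline{s}]$ with the $\varphi$-substitution on $[\overline{s},s_1]$, the boundary terms at $\overline{s}$ vanishing because $a_0(\overline{s})=0$. You would need to replace your even-sector argument by something of this kind.
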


Fix a value $c<\varphi (s_0)/\beta$ and let $s_2\in (0,s_0)$ be the solution to $\varphi (s_2)/c =\beta$.
Then the soap bubbles $S_{s_1}$ with $s_1>s_2$ are all stable with respect to $G'_c$. The small
$S_{s_1}$, corresponding to $s_1$ close to zero, look like thin tubes
around a circle and are not stable.

In example (\ref{ex-1}) we have $(\log f)_{ss}=f/2>0$.  In example
(\ref{ex-2}) we have
$(\log f)_{ss}=(3+3\, s^2)/(1-s^2)^2>0$. Both examples provide stable soap bubbles
enclosing arbitrarily large volume in $Y\times{\mathbb R}$ but whose projection to $Y$ is not surjective.

We do not know whether for $c\geq\varphi (s_0)/\beta$ the metric $G'_c$ admits a family of non-isoperimetric
stable soap bubbles with enclosed volume going to infinity.

Consider for a moment the problem of embedding $(Y,G_c)=(I\times S^1,G_c)$ isometrically into ${\mathbb R}^3$
as a surface of revolution. Such embedding would be:
\[ (x_1,x_2,x_3)\; =\;\big(\; r(s)\cos\theta\; ,\; r(s)\sin\theta\; ,\; x_3(s)\;\big)\; ,\]
for some functions $r(s),x_3(s):I\to{\mathbb R}$ satisfying the following equations:
\[ r'(s)^2+x_3'(s)^2\; =\; 1\quad ,\quad r(s)\; =\; c\cdot f(s)\; .\]
The function $r(s)$ is already given by the second equation, while $x_3(s)$ is given
by  the formula $x_3(s)=\int\sqrt{1-c^2\, f'(s)^2}\, ds$. Therefore $x_3(s)$
exists if $c\, |f'(s)|<1$, which is true for small enough $c$. Once $Y=I\times S^1$ is
thus embedded into ${\mathbb R}^3$, we can extend it to a closed surface $X\subset{\mathbb R}^3$.
If $c$ is also smaller than $\varphi (s_0)/\beta$, then $X\times{\mathbb R}$ is a cylinder in ${\mathbb R}^4$
admitting the stable family of soap bubbles.

\begin{proof}[Proof of Theorem \ref{teo-estabilidad}]

Let us first obtain a convenient formula for the index form of $S_u$. We have
the orthogonal bases
$\{\,\partial_s+u'(s)\,\partial_y\, ,\,\partial_\theta\,\}$ for
the tangent spaces of  $S_u$,  and we consider orthonormal
bases  $\{{\bf e}_1,{\bf e}_2\}$  with  ${\bf e}_1\in{\mathbb
R}(\partial_s+u'(s)\,\partial_y)$  and  ${\bf e}_2\in{\mathbb
R}\partial_\theta$.  In these orthonormal bases, the matrix of
$II$ is:
\[
\left[\begin{array}{cc}\frac{u_{ss}}{(1+u_s^2)^{3/2}} & 0
\\ 0 & \frac{f_s}{f}\cdot\frac{u_s}{(1+u_s^2)^{1/2}}\end{array}\right]
\; =\;
\left[\begin{array}{cc} - \frac{d}{ds}\,\sin\alpha  & 0\\
0 & -\frac{f_s}{f}\cdot\sin\alpha\end{array}\right]
\; ,\]
independent of the constant $c$. Using
equalities (\ref{f-phi-eq}) and (\ref{sin-alpha}),  we rewrite that matrix as:
\[
\left[\begin{array}{cc}-H\varphi_s & 0
\\ 0 & H\cdot (\varphi_s-1)\end{array}\right]
\; ,\]
and so
$|II|^2=H^2\,\big(\,{\varphi_s}^2+(\varphi_s-1)^2\,\big)$.

For the Ricci term we have  $\displaystyle\mbox{\rm Ric}\, (\nu ,\nu
)=\frac{-f_{ss}}{f}\,\sin^2\alpha =\frac{-f_{ss}}{f}\,
H^2\,\varphi^2$, also independent of $c$. The relations
$\frac{f_s}{f}=\frac{1-\varphi_s}{\varphi}$ \ and \ $\frac{f_{ss}}{f}=
\frac{-\varphi_{ss}}{\varphi}+\frac{(1-\varphi_s)(1-2\varphi_s)}{\varphi^2}$
lead to:
\[
-\mbox{\rm Ric}\, (\nu ,\nu )\; =\; H^2\cdot\big(\,
-\varphi\,\varphi_{ss}+(1-\varphi_s)\, (1-2\varphi_s) \,\big)
\; .\]
Then, after a trivial simplification:
\[
P\; =\; -\mbox{\rm Ric}\, (\nu ,\nu )-|II|^2\; =\;
H^2\cdot (-\varphi\,\varphi_{ss}-\varphi_s)
\; .\]
We can use $(s,\theta)$  as coordinates in  $S_u\cap\{ y\geq 0\}$  and in
$S_u\cap\{ y\leq 0\}$; accordingly we view $ds\, d\theta$ as a measure on all of $S_u$.  In these
coordinates:
\begin{eqnarray*}
\mbox{\rm metric induced on }S_u &=& (1+{u'}^2)\, ds^2+c^2\, f^2\, d\theta^2\; , \\
\mbox{\rm area measure on }S_u  &=& \sqrt{1+{u'}^2}\; c\,  f\; ds\, d\theta
\; , \end{eqnarray*}
and for any $g:S_u\to{\mathbb R}$ we find that:
\[
|\nabla^{S} g|^2\, d\,\mbox{\rm area} \; =\; \left(\frac{c\, f}{
\sqrt{1+{u'}^2} } \, g_s^2+\frac{\sqrt{1+{u'}^2}}{c\, f}\,
g_\theta^2\right)\, ds\, d\theta
\; .\]
For $S_{s_1}$ we can write
$1+u'^2=1+\frac{\varphi^2}{\varphi
(s_1)^2-\varphi^2}=\frac{\varphi (s_1)^2}{\varphi
(s_1)^2-\varphi^2}$, and finally obtain:
\begin{equation}\label{Q1}
Q(g)\; =\; \int_S\left(\; A\, g_s^2+\frac{1}{A}\, g_\theta^2+B\, g^2
\right)\, ds\, d\theta
\; , \end{equation}
where:
\begin{equation}\label{Q2}
A\; =\; H\, c\, f(s)\,\sqrt{\varphi (s_1)^2-\varphi (s)^2}\quad ,\quad
B\; =\; H\, c\, f(s)\,\frac{-\varphi\,\varphi_{ss}-\varphi_s}{\sqrt{\varphi (s_1)^2-\varphi (s)^2}}
\; .\end{equation}
In $S_{s_1}\cap\{ y\neq 0\}$, where $s$ is a valid coordinate, the
Jacobi operator $\Delta^S g-P\, g$ is the result of multiplying the following operator with a positive function:
\begin{equation}\label{Jacobi-coords}
\left( A\, g_s\right)_s+ \left(\frac{g_\theta}{ A }\right)_\theta -B\, g
\; . \end{equation}
The surface $S_{s_1}$ is invariant under the reflections:
\[ (s,\theta ,y)\;\longleftrightarrow\; (-s,\theta ,y)\quad ,\quad
(s,\theta ,y) \;\longleftrightarrow\; (s,\theta ,-y)
\; ,\]
and so it makes sense to define, for functions $g:S_{s_1}\to{\mathbb R}$, the properties of
being odd or even in the $s$ variable, and the same for the $y$ variable. We shall denote by
$g_{[s]},g_{(s)}$ the odd and even parts of $g$ with respect to $s$, respectively, that is
\[
g_{[s]} := \frac{1}{2}\, \big(\, g(s,\theta,y)-g(-s,\theta ,y)\,\big)
\quad ,\quad
g_{(s)} := \frac{1}{2}\, \big(\, g(s,\theta,y)+g(-s,\theta ,y)\,\big)
\; ,\]
and $g_{(y)},g_{[y]}$ shall have the analogous meaning in the $y$ variable. In particular,
we shall use the decomposition:
\[ g\; =\; g_{[y]}+g_{(y)[s]}+g_{(y)(s)}\; .\]
The polar bilinear form of the index form admits the expression:
\begin{equation}\label{polar}
Q(g,\tilde{g})\; =\; \int_S\left(\; A\, g_s\,\tilde{g}_s+\frac{1}{A}\, g_\theta\,\tilde{g}_\theta +B\, g\,\tilde{g}
\right)\, ds\, d\theta
\; ,\end{equation}
and it is obvious, by the symmetries of $A$ and $B$, that functions with different parity in $s$ or in $y$
are $Q$-orthogonal. Therefore, for every $g:S_{s_1}\to{\mathbb R}$ we have:
\[ Q(g)\; =\; Q\big(g_{[y]}\big) +Q\big(g_{(y)[s]}\big) +Q\big(g_{(y)(s)}\big) \; ,\]
and we shall do a separate study of the positivity of each summand. The functions
$g_{[y]}$ and $g_{(y)[s]}$ always have zero average, hence
$g$ has zero average if and only if $g_{(y)(s)}$ has zero average.

The next result follows from the identity $\gamma^2\, |\nabla g_1|^2=
|\nabla (g_1\gamma)|^2-\nabla\gamma\cdot\nabla (g_1^2\, \gamma )$.

\begin{lem}\label{G-F}
Let $\Sigma$ be any compact surface with a Riemann metric. For any two functions $g_1,\gamma:\Sigma\to{\mathbb R}$
the following holds:
\begin{eqnarray*}
&&\int_\Sigma\big(\,|\nabla^\Sigma (g_1\,\gamma )|^2+P\, (g_1\,\gamma )^2   \,\big) \; = \\
&=& \int_\Sigma g_1^2\,\gamma\cdot\big(\,P\gamma-\Delta^\Sigma\gamma\,\big) +
\int_\Sigma\gamma^2\, |\nabla^\Sigma g_1|^2
+\int_{\partial\Sigma} (g_1^2\,\gamma )\;\eta\cdot\nabla^\Sigma\gamma
\; ,\end{eqnarray*}
where $\eta$ is the outer conormal along $\partial\Sigma$.
\end{lem}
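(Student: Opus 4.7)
The plan is to follow the hint given just before the lemma statement and then integrate by parts.

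First I would verify the pointwise identity $\gamma^2\,|\nabla g_1|^2 = |\nabla(g_1\gamma)|^2 - \nabla\gamma\cdot\nabla(g_1^2\gamma)$ by a direct computation: expanding $\nabla(g_1\gamma)=\gamma\nabla g_1+g_1\nabla\gamma$ gives $|\nabla(g_1\gamma)|^2=\gamma^2|\nabla g_1|^2+2g_1\gamma\,\nabla g_1\cdot\nabla\gamma+g_1^2|\nabla\gamma|^2$, while $\nabla(g_1^2\gamma)=2g_1\gamma\nabla g_1+g_1^2\nabla\gamma$ gives $\nabla\gamma\cdot\nabla(g_1^2\gamma)=2g_1\gamma\,\nabla g_1\cdot\nabla\gamma+g_1^2|\nabla\gamma|^2$, and the two expressions differ by exactly $\gamma^2|\nabla g_1|^2$.

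Next I would integrate the rearranged identity $|\nabla(g_1\gamma)|^2=\gamma^2|\nabla g_1|^2+\nabla\gamma\cdot\nabla(g_1^2\gamma)$ over $\Sigma$ and apply Green's formula to the last term:
\[
\int_\Sigma \nabla\gamma\cdot\nabla(g_1^2\gamma)\; =\; -\int_\Sigma (g_1^2\gamma)\,\Delta^\Sigma\gamma\; +\; \int_{\partial\Sigma}(g_1^2\gamma)\,\eta\cdot\nabla^\Sigma\gamma,
\]
where $\eta$ is the outer conormal on $\partial\Sigma$. Substituting back yields
\[
\int_\Sigma|\nabla^\Sigma(g_1\gamma)|^2 \;=\; \int_\Sigma\gamma^2|\nabla^\Sigma g_1|^2 \;-\; \int_\Sigma g_1^2\gamma\,\Delta^\Sigma\gamma \;+\; \int_{\partial\Sigma}(g_1^2\gamma)\,\eta\cdot\nabla^\Sigma\gamma.
\]

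Finally I would add $\int_\Sigma P(g_1\gamma)^2 = \int_\Sigma Pg_1^2\gamma^2$ to both sides and group the two terms involving $\gamma$ alone on the right into $\int_\Sigma g_1^2\gamma(P\gamma-\Delta^\Sigma\gamma)$. This produces exactly the identity claimed in the lemma. There is no real obstacle here, but one should note that $g_1$ and $\gamma$ are only required to be smooth enough for the computations (say $\mathcal{C}^2$), and that the divergence theorem applies because $\Sigma$ is compact with boundary. No additional hypothesis on the sign or integrability of $P$ is needed, since the identity is purely algebraic plus one integration by parts.
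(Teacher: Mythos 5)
Your proposal is correct and is exactly the argument the paper intends: the paper's entire proof consists of stating the pointwise identity $\gamma^2|\nabla g_1|^2=|\nabla(g_1\gamma)|^2-\nabla\gamma\cdot\nabla(g_1^2\gamma)$ and asserting that the lemma follows, which is precisely what you have spelled out via integration and Green's formula (with the sign convention $\int_\Sigma\nabla\gamma\cdot\nabla\psi=-\int_\Sigma\psi\,\Delta^\Sigma\gamma+\int_{\partial\Sigma}\psi\,\eta\cdot\nabla^\Sigma\gamma$, consistent with the paper's use of $\Delta^\Sigma$). Nothing is missing.
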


{\bf First case: $g$ is odd in the variable $y$}. Since $\partial_y$ is a Killing vector field, the
function $\psi = (1/H)\,\langle\partial_y,\nu\rangle$ is a solution to the Jacobi equation:
\[  \Delta^S\psi -P\,\psi =0 \; .\]
The formula $\psi =(\mbox{\rm sig } y)\cdot\sqrt{\varphi (s_1)^2-\varphi^2}$ shows that $\psi$ vanishes
with non-zero derivative along the two circles defined as $S_{s_1}\cap\{ y=0\}$. Any function $g$ that is odd in the
variable $y$ vanishes along those circles too, hence $g=\psi\cdot g_1$ for some
smooth function $g_1$  on $S_{s_1}$. In this case Lemma \ref{G-F} gives:
\[ Q (g)\; =\; Q\big(\psi\cdot g_1\big)\; =\; \int_S \psi^2\,|\nabla^Sg_1|^2\; .\]
It follows that, for $g$ odd in $y$, the number $Q(g)$ is positive unless $g$ is
a constant multiple of $\langle\partial_y,\nu\rangle$.

{\bf Second case: $g$ is even in $y$ and odd in $s$}. Now we have $g=\varphi\, g_1$ for some smooth function $g_1$
on $S_{s_1}$. Using formula (\ref{f-phi-eq}) one can do
a direct calculation that yields the following result:
\[
(A\varphi_s)_s\; =\; B\,\varphi- \frac{c\, f\, \varphi (s_1)}{\sqrt{\varphi (s_1)^2-\varphi^2}}
\cdot\varphi\cdot (\log f)_{ss}
\; .\]
The hypothesis of Theorem \ref{teo-estabilidad} then says that $(A\,\varphi_s)_s-B\,\varphi$
and $\Delta^S\varphi -P\,\varphi$ are negative multiples of $\varphi$. In this case Lemma~\ref{G-F}
gives us the following:
\[
Q(g)\; =\; Q(g_1\varphi )\; =\; \int_{S_{s_1}} g_1^2\,\varphi^2\cdot(\mbox{\rm positive})
+\int_{S_{s_1}} \varphi^2\, |\nabla^Sg_1|^2
\; ,\]
and it is obvious that $Q(g)>0$ unless $g_1$ and $g$ are identically zero.

{\bf Third case: $g$ is even in both $s$ and $y$}. There is a closed profile $C_u\subset (sy\;\,\mbox{\rm plane})$
such that $S_{s_1}$ is like $C_u\times S^1$ with the coordinate $\theta$ going along the $S^1$ factor. Moreover $C_u$
is the union of two graphs $\{ y=\pm u(s)\}$ with $-s_1\leq s\leq s_1$. Consider the Fourier expansion:
\[ g\; =\; a_0+\sum_{k\geq 1} \big(\, a_k\,\cos k\theta+b_k\,\sin k\theta\,\big) \; ,\]
where the coefficients are functions $a_k,b_k:C_u\to{\mathbb R}$ as symmetrical as $g$ is:
\[ a_k(-s,y)=a_k(s,y)\;\;\mbox{\rm and }\; a_k(s,-y)=a_k(s,y)\, ,\; \mbox{\rm same for }\; b_k\; .\]
Considering $ds$ as a measure on all of $C_u$, we can write:
\begin{eqnarray*}
Q(g) &=& \int_{C_u}2\pi\,\big[\,  A\, a_{0s}^2+B\, a_0^2  \,\big]\, ds + \\
&& + \int_{C_u}\pi\,\left[\, A\,\sum_{k\geq 1}(a_{ks}^2+b_{ks}^2)
+\sum_{k\geq 1}\left(\,\frac{k^2}{A}\, (a_k^2+b_k^2)+ B\, (a_k^2+b_k^2)
\,\right)\,\right]\, ds
\; , \end{eqnarray*}
thus:
\[
Q(g)\; \geq\; 2\pi\,\int_{C_u} (\,A\,a_{0s}^2+B\, a_0^2\, )\,ds +
\pi\,\sum_{k\geq 1}\int_{C_u} \left( \frac{1}{A}+B \right)\, (a_k^2+b_k^2)\, ds
\; .\]
The points
\[ (s_1,0)\; ,\; (0, u(0))\; ,\; (-s_1,0)\; ,\; (0,-u(0)) \]
separate $C_u$ into four quadrants. That $g$ has zero average in $S_{s_1}$ is equivalent to $a_0$ having
zero average in $C_u$ and, by the symmetries, it
also has zero average on each quadrant. The first quadrant is the graph
$C'_u=\left\{ \, \big( s,u(s)\big)\, :\, 0\leq s\leq s_1\,\right\}$ and, since $a_0$ has
zero average on it, there is a value $\overline{s}\in (0,s_1)$ such
that $a_0\big(\overline{s},u(\overline{s})\big)=0$.

We shall now use
the one-dimensional version of Lemma~\ref{G-F}, see \cite[page 107]{GF}. For
$0\leq s\leq \overline{s}$ use $\gamma=\psi$ and get:
\begin{eqnarray*}
&& \int_{C'_u\cap\{ 0\leq s\leq\overline{s}\}}(\,A\,a_{0s}^2+B\, a_0^2\, )\,ds \; =\;
\int_{C'_u\cap\{ 0\leq s\leq\overline{s}\}} A\,\psi^2\,\left[\frac{d}{ds}\,\frac{a_0}{\psi}\right]^2\, ds +
\left[\,A\,\frac{\psi_s}{\psi}\, a_0^2\,\right]_0^{\overline{s}}\;\geq \\
&& \geq \left[\,A\,\frac{\psi_s}{\psi}\, a_0^2\,\right]_0^{\overline{s}} \; =\; 0
\; , \end{eqnarray*}
and we see that $\int_{C'_u\cap\{ 0\leq s\leq\overline{s}\}}(\,A\,a_{0s}^2+B\, a_0^2\, )\,ds \geq 0$, with
strict inequality unless $a_0|_{0\leq s\leq\overline{s}}$ is a constant multiple of~$\psi$.  For 
$\overline{s}\leq s\leq s_1$ use
$\gamma=\varphi$ and obtain:
\[
\int_{C'_u\cap\{ \overline{s}\leq s\leq s_1\}}(\,A\,a_{0s}^2+B\, a_0^2\, )\,ds \;\geq\;
\left.-A\,\frac{\varphi_s}{\varphi}\, a_0^2\right|_{s=\overline{s}} \; =\; 0
\; ,\]
with strict inequality unless $a_0\equiv 0$ on $C'_u\cap\{\overline{s}\leq s\leq s_1\}$.

But if $a_0|_{C'_u}$ has to be a constant multiple of $\psi$ on $0\leq s\leq\overline{s}$, 
and zero on $\overline{s}\leq s\leq s_1$, then it must be identically zero We 
conclude that $\int_{C'_u}(\,A\,a_{0s}^2+B\, a_0^2\, )\,ds>0$ unless
$a_0$ is everywhere zero.

We want the term $\pi\,\int_C\left(\frac{1}{A}+B\right)\, (a_k^2+b_k^2)\, ds$ to be
positive unless $a_k=b_k=0$ for $k\geq 1$. Thus we want to ensure that $\frac{1}{A}+B$
is a positive function. The formulas:
\begin{eqnarray*}
\frac{1}{A} &=& \frac{1}{H\, c\, f\, \sqrt{\varphi (s_1)^2-\varphi^2}}\; =\;
\frac{\varphi (s_1)}{c}\,\frac{1}{f\,\sqrt{\varphi (s_1)^2-\varphi^2}}\; , \\
B &=& H\,c\,f\,\frac{-\varphi\,\varphi_{ss}-\varphi_s}{\sqrt{\varphi (s_1)^2-\varphi^2}} \; =\;
\frac{c}{\varphi (s_1)}\, \frac{1}{f\cdot\sqrt{\varphi (s_1)^2-\varphi^2}}\cdot  f^2\cdot
(-\varphi\,\varphi_{ss}-\varphi_s)
\; , \end{eqnarray*}
show that the following condition:
\[ \left(\frac{\varphi (s_1)}{c}\right)^2\; >\; \beta^2\; :=\;
\max_{[0,s_0]} \big(\, f^2\cdot|-\varphi\,\varphi_{ss}-\varphi_{s_1}|\,\big)
\; ,\]
implies that  $\frac{1}{A}+B$  is a positive function. This takes care of the third case.

Putting the three cases together we see that, for
$\varphi (s_1)/c>\beta$ and $g:S_{s_1}\to{\mathbb R}$
with zero average, it is $Q(g)>0$ unless $g$ is a constant
multiple of $\langle\partial_y,\nu\rangle$. This proves
Theorem~\ref{teo-estabilidad} and finishes the proof of Theorem~\ref{teo-estables}.

\end{proof}


\section*{Appendix: slope estimate}

We prove here a gradient estimate for a rotated graph $S=\{\rho =u(x)\}\subset X\times\R$
that has constant mean curvature $H$ and  is some distance apart from its symmetry axis $X\times\{ 0\}$. The gradient
bound depends on the radius oscillation of~$S$.

More concretely, the radius function $u:X\to{\mathbb R}$ is defined on all of $X$
and we assume $\,\min u>1$. The radius oscillation is the number:
\[ C\; =\; \max u -\min u\; .\]

\begin{thm}\label{grad-estimate}
There is a constant $C'$, depending only on $n,X,C$, such that:
\[  |\nabla u (x)|\leq C'\, ,\;\;\mbox{\rm for all }\,x\in X
\; .\]
\end{thm}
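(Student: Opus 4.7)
The plan is to apply a maximum-principle argument on the closed manifold $X$ to the quasilinear constant mean curvature equation
\begin{equation*}
H \;=\; \frac{n-1}{u\,w}\;-\;\mbox{\rm div}\frac{\nabla u}{w}\,, \qquad w=\sqrt{1+|\nabla u|^2}\,,
\end{equation*}
satisfied by the rotated graph $S=\{\rho=u(x)\}$. A preliminary observation is that $H$ is automatically bounded in terms of $n$ alone, so the constant in the estimate need not depend on $H$: integrating the equation over $X$ kills the divergence term and leaves $H|X|=\int_X(n-1)/(uw)\,dV$, whence $\min u>1$ gives $0\leq H\leq n-1$.

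Next I would consider the auxiliary function $\phi=w\cdot e^{-\lambda(u-\min u)}$ on $X$, with $\lambda>0$ to be fixed in terms of $n$, $X$, and $C$. The oscillation bound guarantees $e^{-\lambda C}\leq e^{-\lambda(u-\min u)}\leq 1$, so any pointwise bound on $\phi$ translates to a pointwise bound on $w$ up to the factor $e^{\lambda C}$. Since $X$ is closed, $\phi$ attains its maximum at some $x_0\in X$. The first-order condition $\nabla\phi(x_0)=0$ combined with the identity $\nabla_iw=u_{ij}u^j/w$ yields $u_{ij}u^j=\lambda w^2u_i$ at $x_0$; in particular $\nabla u$ is a Hessian eigenvector with eigenvalue $\lambda w^2$, so $|\nabla^2u|^2\geq\lambda^2w^4$, and the CMC equation pins down $\Delta u(x_0)=(n-1)/u-Hw+\lambda(w^2-1)$.

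The central step is to substitute the second-order condition $\Delta\phi(x_0)\leq 0$ into the Bochner identity
\begin{equation*}
w\,\Delta w \;=\; |\nabla^2u|^2+\langle\nabla u,\nabla\Delta u\rangle+\mbox{\rm Ric}_X(\nabla u,\nabla u)-|\nabla w|^2\,.
\end{equation*}
The term $\langle\nabla u,\nabla\Delta u\rangle$ is evaluated by differentiating the CMC equation; its delicate contribution $\nabla u\cdot\nabla[\nabla^2u(\nabla u,\nabla u)/w^2]$ involves third derivatives of $u$, which one handles with the Ricci identity $u_{ijk}-u_{ikj}=R_{ijk}{}^lu_l$ and the first-order relation $u_{ij}u^j=\lambda w^2u_i$. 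After this bookkeeping, and using the trace constraint from the CMC equation (here is where $\min u>1$ enters, bounding the lower-order terms), the Bochner inequality reduces to a polynomial inequality in $w(x_0)$ that, for $\lambda$ chosen sufficiently large in terms of $n,X,C$, forces $w(x_0)\leq C''(n,X,C)$. The oscillation factor then propagates this to $|\nabla u(x)|\leq w(x)\leq e^{\lambda C}\cdot C''=:C'$ on all of $X$.

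The hard part will be the Bochner bookkeeping in the central step, specifically the computation of $\nabla u\cdot\nabla[\nabla^2u(\nabla u,\nabla u)/w^2]$: a naive choice of $\phi$ leads to exact cancellation of the $w^4$-order terms, and one needs the combination of the Hessian-eigenvalue relation at the critical point and the trace constraint from the CMC equation to break this cancellation in our favour. Everything else is standard elliptic maximum-principle bookkeeping, together with the routine observation that all ambient curvature quantities of $X\times\mathbb{R}^n$ are controlled by the geometry of the closed factor $X$.
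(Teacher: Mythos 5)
Your route is genuinely different from the paper's: the paper follows Korevaar's normal--perturbation method (push $S$ off itself by $\varepsilon\lambda\nu$, translate back down until it touches, and compare mean curvatures at the touching point), which never differentiates the CMC equation; you propose a Bernstein-type maximum-principle argument on the closed base $X$. Two things in your sketch are correct and worth keeping: integrating the equation over $X$ does give $0\leq H\leq n-1$ once $\min u>1$, so the constant need not depend on $H$ (this even tidies up the dependence on $H$ that appears in the paper's choice of $C_1$); and the first-order relations you extract at the maximum of $\phi=w\,e^{-\lambda(u-\min u)}$ (the eigenvalue relation $u_{ij}u^j=\lambda w^2u_i$, the value of $\Delta u(x_0)$) are right.

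The central step, however, has a genuine gap that the tools you name cannot close. Writing the equation as $\Delta u=\tfrac{n-1}{u}-Hw+w^{-2}\,\nabla^2u(\nabla u,\nabla u)$ and differentiating, the term $\langle\nabla u,\nabla\Delta u\rangle$ in the Bochner identity produces the third-derivative contraction $T=u_{ijk}u^iu^ju^k$. Tracking orders at $x_0$: the gain $|\nabla^2u|^2-|\nabla w|^2\geq\lambda^2w^4-\lambda^2w^2(w^2-1)=\lambda^2w^2$ is only quadratic in $w$, while $\Delta\phi(x_0)\leq 0$ bounds $w\Delta w$ above by $2\lambda^2w^2(w^2-1)+O(\lambda w^3)$; so to force a contradiction for large $w(x_0)$ you need a \emph{lower} bound $T\gtrsim\lambda^2w^6$. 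The Ricci identity only reshuffles indices at the cost of curvature terms of order $w^2$, and the relation $u_{ij}u^j=\lambda w^2u_i$ holds at the single point $x_0$, so it cannot be differentiated to evaluate $T$. Worse, the available second-order information points the wrong way: $\nabla^2\phi(\nabla u,\nabla u)\leq 0$ yields an \emph{upper} bound on $T$ of order $\lambda^2w^6$, and neither $\Delta\phi\leq0$ nor the equation constrains $T$ from below. As structured, the inequality chain is underdetermined in $T$ and no choice of $\lambda$ yields $w(x_0)\leq C''$. The known repair is to test the maximum with the principal part $a^{ij}\nabla_i\nabla_j$, $a^{ij}=\delta^{ij}-u^iu^jw^{-2}$, of the non-divergence form of the equation instead of with $\Delta$: then the third derivatives in $a^{ij}\nabla_i\nabla_jw$ match, modulo curvature, the combination $a^{ij}u_{ij;k}u^k$ that the differentiated equation expresses through second derivatives alone. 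That is a different computation from the one you outline, not mere bookkeeping; alternatively, adopt the paper's argument, which avoids differentiating the equation altogether. Until one of these replaces your Bochner step, the proof is incomplete.
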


First we prove the theorem for $n\geq 2$. Points on $X\times\R$ will be described as $p=(x,\overline{y},y_n)$,
where $x\in X$, $\overline{y}=(y_1,\dots ,y_{n-1})\in{\mathbb R}^{n-1}$, and $y_n\in{\mathbb R}$. The function:
\[ f(x,\overline{y})\; =\; \sqrt{u(x)^2-|\overline{y}|^2}  \; ,\]
already considered in formula (\ref{f-graph}) of Section~\ref{sec-regularity}, is defined in an open subset
of $X\times{\mathbb R}^{n-1}$ that contains the closure of the following domain:
\[ U_1\; =\; \{\, (x,\overline{y})\; :\; |\overline y|< 1\,\}
\; =\; X\times B^{{\mathbb R}^{n-1}}(0,1)\; , \]
and $S_+:=S\cap\{ |\overline{y}|< 1\; ,\; y_n>0\}$ is described
as a Cartesian graph:
\[ S_+\; =\; \{ y_n=f(x,\overline{y})\, ,\, (x,\overline{y})\in U_1\}\; .\]
The function $f$ has the following bounds:
\[ \min u -1\; <\; f\;\leq\max u \; .\]
A gradient bound for $f$ will provide the same for $u$, because $u(x)=f(x,0)$.

Denote by $\nu$  the unit normal of $S$ which along $S_+$ points to the
positive $y_n$-direction. Consider also the function:
\[ w\; =\; \sqrt{1+|\nabla f|^2}\; .\]

We use the method of N. Korevaar in  \cite{Kor}. Let
$\lambda  :X\times{\mathbb R}^n\rightarrow{\mathbb R}$
be a non-negative continuous function, smooth at the points of $S$ where it is positive,
and zero  on $S\cap\{ |\overline{y}|\geq 1\}$.  For small $\varepsilon >0$
push $S$ off itself by taking
each point $z\in S$
to the endpoint $z_\varepsilon$  of the geodesic segment with length
$\varepsilon\,\lambda  (p)$ and initial velocity $\nu_p$.
This deforms $S$  to a new hypersurface $S_\varepsilon$
whose mean curvature we denote~$H_\varepsilon$.

The new hypersurface $S_\varepsilon$  lies above  $S$ in $\{ y_n>0\}$ and
coincides with $S$ outside  $\{ |\overline{y}|\geq 1+\mbox{\rm O}(\varepsilon )\}$.
The points of $S\cap\{ |\overline{y}|\geq1\}$ are not moved at all.
Therefore the height of  $S_\varepsilon$ over $S$  is
maximized at some point $z_{0,\varepsilon}\in S_\varepsilon$ which is the
end of a geodesic segment issuing from some
$z(\varepsilon )\in S_+$.

We can apply to  $S_\varepsilon$  a downward translation in the
$y_n$-direction until it touches $S$
from underneath at the translated point of $z_{0,\varepsilon}$.  Hence
$H_\varepsilon (z_{0,\varepsilon} )\geq H$.  But formula
(\ref{formula-H.}) of Section~\ref{sec-regularity} gives the
following expression for $H_\varepsilon$:
\[
H_\varepsilon (z_\varepsilon ) =
H\; -\;\left(\, | II |^2\,\lambda
+\mbox{\rm Ric}\, (\nu ,\nu )\,\lambda
+\Delta^{S}\lambda \,\right)_z\cdot\varepsilon
\; +\; {\mathcal E}_1\, ,\mbox{\rm \ for all}\; z\in S\cap\{ \lambda >0\}
\; .\]
The error term ${\mathcal E}_1$
is an $O(\varepsilon^2)$ depending
at most on third derivatives of $(f,\lambda ,G)$;
here $G$ denotes the metric on $X$.  The
inequality $H_\varepsilon (z_{0,\varepsilon} )\geq H$  then
implies:
\begin{equation}\label{ineq-laplacian}
\left(\,\Delta^{S}\lambda  -
R_0\,\lambda \,\right)_{z(\varepsilon )} \;\leq\;
\frac{1}{\varepsilon}\,{\mathcal E}_1
\; ,\end{equation}
where $R_0$ is the number defined
in formula (\ref{def-R0}) of Section~\ref{sec-estimates-2}, and
$\frac{1}{\varepsilon}\,{\mathcal E}_1$  is an
$O(\varepsilon )$  depending at most on third derivatives of
$(f,\lambda ,G)$; it may be positive or negative.

We shall see that (\ref{ineq-laplacian}) leads to an apriori bound
$w\big( z(\varepsilon )\big)\leq w_0$ at
the special point $z(\varepsilon )$. The idea is the following:
Korevaar constructs  $\lambda $  with large
positive second derivative  $\lambda _{y_n\, y_n}$; if  $S$
were close to vertical at $z(\varepsilon )$, then $\left(\Delta^{S}\lambda\right) \big( z(\varepsilon )\big)$
would be a large positive number and (\ref{ineq-laplacian}) would
be contradicted for small $\varepsilon$.

The number $w_0$
is independent of $\varepsilon$. Let us see that, in the limit
as $\varepsilon\to 0$, the quantity $\lambda  w$ is maximized at~$z(\varepsilon )$.

For $\varepsilon$ sufficiently small, depending on first
derivatives of $(f,\lambda ,G)$, the hypersurface
$S_\varepsilon\cap\{|\overline{y}|\leq 1\, ,\, y_n>0\}$ is the graph of a
function $f_\varepsilon (x,\overline{y})$. Define  $f(z)=f(x,\overline{y})$ if
$z=(x,\overline{y},y_n)$, and similarly for $f_\varepsilon$
and  $w$. We have the following formula for the height of
$S_\varepsilon$  above~$S$:
\[
f_\varepsilon (z) -f(z)\; =\; \varepsilon \,\lambda  (z)\, w(z)
\; +\; {\mathcal E}_2
\; ,\]
where the error term ${\mathcal E}_2$  is an
$O(\varepsilon^2)$  depending at most on second derivatives of
$(f,\lambda ,G)$.  See the picture in \cite[page 85]{Kor} for a
convincing proof. Since the quantity $(f_\varepsilon
-f)/\varepsilon$ is maximized at $z(\varepsilon )$,  we have:
\begin{equation}\label{ineq-height}
\lambda  (z)\, w(z)\;\leq\; \lambda  \big( z(\varepsilon )\big)\, w\big( z(\varepsilon )\big)
\; +\; {\mathcal E}_3
\quad ,\quad\mbox{\rm for all }\; z\in S_+
\; ,\end{equation}
where
${\mathcal E}_3$  is an  $O(\varepsilon )$  depending at most
on second derivatives of  $(f,\lambda ,G)$.  As we make
$\varepsilon\to 0$  the point  $z(\varepsilon )$ keeps moving inside
$S_+$ and the term ${\mathcal E}_3$
tends to zero. Given the bound $w\big( z(\varepsilon )\big)\leq w_0$, obtained
from (\ref{ineq-laplacian}) at the special points~$z(\varepsilon )$,  in the
limit we have:
\[
\lambda  (z)\, w(z)\leq \left( \max_{S_+}\,\lambda \right)\cdot w_0
\; ,\]
which gives a slope bound at those  $z\in S_+$ with  $\lambda  (z)$
not too small. Consequently, to the conditions already imposed on $\lambda $ we
add the following one: \
$\lambda  >0$  on $S_+\cap\{\overline{y}=0\}$.

The function $\lambda $  is first given by the ansatz
$\lambda (z)\equiv e^{C_1\,\mu (z) }-1$,  so that  $\lambda $ is positive
where $\mu $  is positive and is zero where  $\mu $  is zero.  We
compute:
\[
\Delta^{S}\lambda  \; =\; e^{C_1\,\mu }\cdot C_1\cdot
\left(\, \Delta^{S}\mu +C_1\,|\nabla^{S}\mu  |^2\,\right)
\; .\]
The factor $e^{C_1\,\mu }\, C_1\geq C_1$
is going to be large.  We want
the expression in parenthesis to be large at steep points
of~$S_+$.

The function $\mu $  is given by a second ansatz:
\[
\mu  (x,y)\;\equiv\;\left(\,
1-|\overline{y}|^2-\frac{\big( y_n-\min f\big)^+}{2+2C}\,\right)^+
\; ,\]
notice that it does not depend on the point  $x\in X$.   Both  $\mu $  and  $\lambda $  vanish on
$S\cap\{ |\overline{y}|\geq 1\}$. The denominator $2+2C$  is necessary
to ensure that $\mu $ and $\lambda $ are positive on
$S_+\cap\{\overline{y}=0\}$.

Consider now the formula $\Delta^{S}\mu  =\Delta^\top\mu  -H\,\mu _\nu$. Obviously
$|\mu _\nu|\leq\frac{1}{2+2C}+2<3$ and
$\Delta^\top\mu  \geq -2\, (n-1)=2-2\, n$, and so:
\[
\Delta^{S}\mu  \geq 2-2\,n-3\,
H\quad\mbox{\rm \ at any point of }\; S_+
\; .\]
This only prevents $\Delta^{S}\mu $
from being a large negative number. We need to choose $C_1$ so that
$C_1|\nabla^{S}\mu  |^2$  is large where $\lambda  >0$.  To estimate
$|\nabla^{S}\mu |$,  we use the unit length vector $\bf v$ which defines the steepest
direction in $S$.  At points where $\mu $ is positive:
\[
{\bf v}=\frac{1}{w}\,\left(\, \frac{\nabla
f}{|\nabla f|}\, ,\, |\nabla f|\,\right)
\;\Longrightarrow\;
 |\nabla^S\mu  |\;\geq\; |{\bf v}\,\mu  |
\; \geq\;
\frac{1}{w}\,\left(\,\frac{|\nabla f|}{2C}-2\,\right)
\; .\]
The last expression goes to  $1/(2C)$  as  $|\nabla f|\to\infty$. Thus
$C_1$ must be a multiple of  $C^2$   to
make $C_1\,|\nabla^{S}\mu |^2$  large. An easy calculation
shows:
\[
|\nabla f|\; >\; 20C\;\Longrightarrow\; \frac{1}{w}\,\left(\,
\frac{|\nabla f|}{2C}-2\,\right)\; >\; \frac{1}{3C}
\;\Longrightarrow\; |\nabla^S\mu| \; >\; \frac{1}{3C}
\; .\]
Accordingly we are going to choose $C_1$ satisfying
$C_1\cdot\left(\frac{1}{3C}\right)^2\geq 2\, n+3\, H$,
i.e.\ $C_1\geq (18\, n+27\, H)\, C^2\,$. With this choice, at
any point where $\mu $ and $\lambda $ are positive
(which certainly include the special points~$z(\varepsilon )$), we have:
\begin{eqnarray*}
&&|\nabla f|  >  20C \;\Longrightarrow\;
\Delta^{S}\mu +C_1\cdot |\nabla^{S}\mu  |^2
> 2-2\,n -3\, H+2\, n+3\, H= 2\;\Longrightarrow \\
&& \;\;\Longrightarrow\;
\Delta^{S}\lambda \;>\; 2\, C_1\, e^{C_1\mu }\; >\;
C_1+C_1\,\lambda \;\Longrightarrow\;\\
&& \;\;\Longrightarrow\;
\Delta^{S}\lambda  -R_0\,\lambda  \;>\; C_1
+(\, C_1-R_0\, )\,\lambda
\; .\end{eqnarray*}
Fix $\mu $ and $\lambda $  by choosing $C_1=\max\,\big(\,
(18\, n+27\, H)\, C^2\, ,\, R_0\,\big)$. For such a choice,  and for
$\varepsilon$  such that
$\displaystyle \frac{1}{\varepsilon}\,{\mathcal E}_1<C_1$,
the special points  $z(\varepsilon )$  satisfy $|\nabla f\big( z(\varepsilon )\big) |\leq 20\,C$ and
$w\big( z(\varepsilon )\big) \leq 1+20\, C$. Making now $\varepsilon\to 0$, we conclude that for $z\in S_+$  with $\lambda
(z)\neq 0$  it is:
\[
w(z)\;\leq\; \frac{1}{\lambda  (z)}\cdot\left(\max_{S_+}\,\lambda \right)\cdot (1+20\, C)
\; \leq\; \frac{e^{C_1}}{\lambda  (z)}\cdot (1+20\, C)
\; .\]
If $z=(x,0,y_n)\in S_+$  is any point with
$\overline{y}=0$, then $\mu  (z)\geq 1/2$ and
$\lambda  (z)\geq e^{C_1/2}-1$.  The desired estimate is then:
\begin{equation}\label{slope-estimate}
|\nabla u (x)|\; =\; |\nabla f(x,0)|\; <\; \frac{e^{C_1}}{e^{C_1/2}-1}(1+20C)\, ,\;\;
\mbox{\rm for all  }\; x\in X
\; .\end{equation}

The proof for $n=1$ is almost the same, with some
tiny simplifications that we next explain. Now we do not need to define $f$,
because $S$ is already the disjoint union of two Cartesian graphs $\{ y_1=\pm u(x)\}$.
We give $\lambda$ by the same ansatz as before, and $\mu$ by this one:
\[ \mu\; =\; 1-\frac{(y_1-\min u )^+}{2+2\, C}  \; .\]
This time we have $|\mu_\nu |<1$ and $\Delta^\top\mu =0$, hence $\Delta^S\mu=0-H\mu_\nu\geq -H$. We
choose $C_1$ satisfying $C_1\cdot\left(\frac{1}{3\, C}\right)^2\geq 2+H$ and $C_1\geq R_0$. Under these conditions,
at points where $|\nabla u|<20\, C$ we have:
\[ 
\Delta^S\mu+C_1\, |\nabla^S\mu |^2\;\geq\; -H+2+H\; =\; 2
\quad\mbox{\rm and}\quad\Delta^S\lambda-R_0\,\lambda\; >\; C_1
\; ,\]
and we recover the estimate (\ref{slope-estimate}) with this new choice for $C_1$.

\end{document}